\documentclass[11pt, leqno]{article}
\usepackage{amsmath,amssymb,amsthm, epsfig,bm}
\usepackage{comment}
\usepackage[mathscr]{euscript}
\usepackage{hyperref}
\usepackage[pdftex]{color}
\usepackage{mathtools}
\usepackage{color}
\usepackage{ulem}
\usepackage{mathrsfs}
\usepackage{wasysym}
\usepackage{stackrel}
\usepackage{pgfplots}
\pgfplotsset{compat=newest}
\allowdisplaybreaks

\title{\bf \Large Gradient regularity for viscosity solutions to quasilinear parabolic equations with mixed singular-degenerate structure}

\author{\it by \smallskip \\ Junior da Silva Bessa \footnote{\noindent Universidade Estadual de Campinas - UNICAMP. Instituto de Matem\'{a}tica, Estat\'{i}stica e Computa\c{c}\~{a}o Cient\'{i}fica - IMECC. Departamento  de Matemática. Bar\~{a}o Geraldo, Campinas - SP, Brazil. \noindent \texttt{E-mail address: \url{jbessa@unicamp.br}}},\qquad
Jo\~{a}o Vitor  da Silva
\footnote{\noindent Universidade Estadual de Campinas - UNICAMP Instituto de Matem\'{a}tica, Estat\'{i}stica e Computa\c{c}\~{a}o Cient\'{i}fica - IMECC. Departamento  de Matemática. Bar\~{a}o Geraldo, Campinas - SP, Brazil. \noindent \texttt{E-mail address: \url{jdasilva@unicamp.br}}},\\ \quad $\&$ \\ \quad Ginaldo de Santana S\'{a}\footnote{\noindent Centro de Modelamiento Matemático (CNRS IRL2807), Universidad de Chile, Santiago, Chile. \noindent \texttt{E-mail address: \url{gdesantana@dim.uchile.cl}}}
}

\newlength{\hchng}
\newlength{\vchng}
\newcommand{\defeq}{\mathrel{\mathop:}=}

\newtheorem{theorem}{Theorem}[section]

\newtheorem{lemma}[theorem]{Lemma}

\newtheorem{proposition}[theorem]{Proposition}

\theoremstyle{definition}

\newtheorem{definition}[theorem]{Definition}

\theoremstyle{remark}

\numberwithin{equation}{section}

\date{}

\begin{document}

\maketitle
\begin{abstract}
\noindent We establish regularity results for viscosity solutions to a class of quasilinear parabolic equations exhibiting nonhomogeneous degeneracy or singularity (a double phase regime) of the form
\[
u_t - \big(|Du|^{\mathfrak{p}} + \mathfrak{a}(x,t)|Du|^{\mathfrak{q}}\big)\Delta_p^{\mathrm N} u = f(x,t) \quad \text{in } Q_1,
\]
where $-1 < \mathfrak{p} < 0$, $\mathfrak{p} \leq \mathfrak{q}$, and $\mathfrak{a}, f : Q_1 \to \mathbb{R}$ are prescribed functions. Using the Jensen--Ishii method, we prove Lipschitz regularity for appropriately translated solutions. Moreover, combining this approach with intrinsic scaling techniques, we establish interior Hölder continuity estimates for the gradient. Our results extend recent work of Fang and Zhang on the homogeneous case via a different approach. 

\medskip
\noindent \textbf{Keywords}: Quasilinear parabolic PDEs, viscosity solutions, H\"{o}lder gradient estimates, double phase growth.
\vspace{0.2cm}
	
\noindent \textbf{AMS Subject Classification: Primary 35K65; 35B65; 35K92; Secondary 35D40
}
\end{abstract}
\section{Introduction}

In this manuscript, we study the regularity of viscosity solutions to the quasilinear parabolic problem
\begin{equation}\label{Problem}
 u_{t}(x,t) - \mathcal{H}(x,t,Du)\,\Delta_{p}^{\mathrm{N}}u(x,t) = f(x,t)\quad \text{in } Q_1 := B_1 \times (-1,0],
\end{equation}
where $1 < p < \infty$ and $\Delta_{p}^{\mathrm{N}}$ denotes the \textit{normalized $p$-Laplacian}, given by
\[
\Delta_{p}^{\mathrm{N}}u(x,t) = \Delta u + (p-2)\left\langle D^2 u \frac{Du}{|Du|}, \frac{Du}{|Du|} \right\rangle.
\]
The diffusion coefficient $\mathcal{H}:\mathbb{R}^n \times \mathbb{R} \times \mathbb{R}^n \to \mathbb{R}$ is defined as
\[
\mathcal{H}(x,t,\xi) = |\xi|^{\mathfrak{p}} + \mathfrak{a}(x,t)|\xi|^{\mathfrak{q}},
\]
where $\mathfrak{a}$ is a positive modulating function and the exponents satisfy $-1 < \mathfrak{p} \leq \mathfrak{q} < \infty$ with $\mathfrak{p} < 0$.

Beyond their intrinsic mathematical interest, problems of the form \eqref{Problem} arise in connection with stochastic tug-of-war games (cf. \cite{Han2022,MPR10,Parv2024,ParRuost16}) and in models for image processing (cf. \cite{Does11}).

Substantial progress has recently been made in the regularity theory of quasilinear double phase problems related to \eqref{Problem}. In the variational setting, we recall the work of De Filippis \cite{DeF20}, who studied weak solutions to the Cauchy--Dirichlet problem
\[
\left\{
\begin{array}{rcll}
\partial_t u - \mathrm{div}\!\left( (|Du|^{p-2} + a(x,t)|Du|^{q-2})Du \right) &=& 0 & \text{in } \Omega_T,\\
u &=& g & \text{on } \partial_{\mathrm{par}}\Omega_T,
\end{array}
\right.
\]
for $g \in C_{\mathrm{loc}}(\mathbb{R}; L^2_{\mathrm{loc}}(\mathbb{R}^n)) \cap L^r_{\mathrm{loc}}(\mathbb{R}; W^{1,r}_{\mathrm{loc}}(\mathbb{R}^n))$, where $r = p'(q-1)$. In this framework, quantitative gradient estimates were established, along with extensions to $p(x,t)$-type diffusion and anisotropic settings. For the same equation without Dirichlet boundary conditions, Buryachenko and Skrypnik \cite{BurSkr22} proved continuity of weak solutions and a Harnack inequality.

Recently,  Sen and  Siltakoski in \cite{SS2025} proved local spatial Lipschitz regularity for viscosity solutions to the parabolic double phase equation
\[
\partial_t u - \operatorname{div}\left(|Du|^{p-2}Du + a(z)|Du|^{q-2}Du\right) = f(z,Du),
\]
via the Ishii--Lions method. They further establish Hölder continuity in time, which is optimal in the degenerate regime. The coefficient \(a\) is nonnegative, bounded, locally Lipschitz continuous in space, and continuous in time, while the source term \(f\) is assumed continuous and satisfies a suitable gradient growth condition. In addition, they proved the equivalence between bounded viscosity and weak solutions under mild additional regularity assumptions on \(a\).

In the non-divergence setting, Fang and Zhang \cite{FZ23} investigated
\begin{equation}\label{EqFZ}
 u_{t}(x,t) = \big(|Du|^{q} + \mathfrak{a}(x,t)|Du|^{s}\big)\Delta_{p}^{\mathrm{N}}u(x,t)\quad \text{in } Q_1,
\end{equation}
with $-1 < q \leq s < \infty$. This model exhibits nonhomogeneous degeneracy and/or singularity, with $\mathfrak{a} \in C^{1}(Q_1)\cap L^{\infty}(Q_1)$ and $\mathfrak{a} > 0$. The authors proved Hölder continuity of the gradient via an oscillation-decay scheme along shrinking parabolic cylinders (see also \cite{IJS19} for the one-phase case).

We also mention \cite{BessaDaSilvaSa25}, where the regime $0 \leq \mathfrak{p} \leq \mathfrak{q}$ (doubly degenerate structure) was considered. By combining the method of alternatives with an improvement-of-flatness argument, the authors established Hölder continuity of the gradient for viscosity solutions to \eqref{Problem}, assuming that $f$ is bounded and $\mathfrak{a}$ is positive, bounded, and of class $C^{1}$. They also obtained non-degeneracy results and optimal growth estimates near critical points.

For further regularity results concerning normalized $p$-Laplacian models with degeneracy and/or singularity, see \cite{Attouchi20,AtouParv18,AttouRuos20,AttouRuos18,Han2020}.

Despite these advances, the analysis of \eqref{Problem} in the regime $\mathfrak{p} < 0$ remains largely open. The main difficulty stems from the mixed gradient growth, reflecting the interplay between singular and degenerate terms, namely $|Du|^{\mathfrak{p}}$ and $\mathfrak{a}(x,t)|Du|^{\mathfrak{q}}$. To the best of our knowledge, available results are essentially confined to the homogeneous case \eqref{EqFZ}.

Motivated by this gap, we establish Hölder continuity of the gradient for viscosity solutions to \eqref{Problem} within a mixed-growth framework.

We impose the following structural assumptions on \eqref{Problem}:
\begin{itemize}
\item[{\bf(A1)}]{\textbf{(Growth condition)}} The exponents $\mathfrak{p}$ and $\mathfrak{q}$ satisfy
\[
-1 < \mathfrak{p} < 0 \quad \text{and} \quad \mathfrak{p} \leq \mathfrak{q} < \infty.
\]
\item[{\bf(A2)}]{\textbf{(Modulating coefficient)}} The coefficient $\mathfrak{a}$ satisfies $\mathfrak{a} \in C^{1}_x(Q_1)\cap C^{1}_t(Q_1)$ and
\[
0 < \mathfrak{a}_{-} \coloneqq \inf_{Q_1} \mathfrak{a}(x,t) \leq \mathfrak{a}(x,t) \leq \mathfrak{a}_{+} \coloneqq \sup_{Q_1} \mathfrak{a}(x,t) < \infty,
\]
and
\[
\mathfrak{A}_0 \coloneqq \|D_{x,t}\mathfrak{a}\|_{L^\infty(Q_1)} < \infty.
\]
\item[{\bf(A3)}]{\textbf{(Source term)}} The source term satisfies $f \in L^\infty(Q_1)\cap C^0(Q_1)$.
\end{itemize}

We now state the main result.

\begin{theorem}\label{Thm01}
Assume \textbf{($\mathrm{A}1$)}--\textbf{($\mathrm{A}3$)}. Let $u \in C^0(Q_1)$ be a bounded viscosity solution to \eqref{Problem}. Then $u$ has a locally Hölder continuous gradient. Moreover, there exist constants
\[
\alpha = \alpha(p,\mathfrak{p},\mathfrak{q},n,\mathfrak{a}_{-},\mathfrak{a}_{+},\mathfrak{A}_0) \in \left(0,\frac{1}{1-\mathfrak{p}}\right),
\quad
\mathrm{C} = \mathrm{C}(p,\mathfrak{p},\mathfrak{q},n) > 0,
\]
such that
\[
\sup_{\substack{(x,t),(y,s)\in Q_{1/2} \\ (x,t)\neq (y,s)}}
\frac{|Du(x,t) - Du(y,s)|}{|x-y|^\alpha + |t-s|^{\alpha/2}}
\leq \mathrm{C}
\left(
\|u\|_{L^\infty(Q_1)}
+ [\mathfrak{a}]_{C^{0,1}_x(Q_1)}^{\frac{1}{1+\mathfrak{p}}}
+ \|f\|_{L^\infty(Q_1)}^{\frac{1}{1+\mathfrak{p}}}
\right),
\]
and
\[
\sup_{\substack{(x,t),(x,s)\in Q_{1/2} \\ t \neq s}}
\frac{|u(x,t) - u(x,s)|}{|t-s|^{\frac{1+\alpha}{2}}}
\leq \mathrm{C}
\left(
\|u\|_{L^\infty(Q_1)}
+ [\mathfrak{a}]_{C^{0,1}_x(Q_1)}^{\frac{1}{1+\mathfrak{p}}}
+ \|f\|_{L^\infty(Q_1)}^{\frac{1}{1+\mathfrak{p}}}
\right).
\]
\end{theorem}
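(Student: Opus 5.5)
The plan follows the abstract: Lipschitz estimates by the Jensen--Ishii method for translated solutions, then intrinsic scaling and an improvement-of-flatness iteration. All steps are carried out for a normalized solution. By scaling $u_\kappa(x,t)=\kappa u(x,\kappa^{\mathfrak p}t)$ and a translation in space–time, we may assume
\[
\|u\|_{L^\infty(Q_1)}\le 1,\qquad \|f\|_{L^\infty(Q_1)}\le \varepsilon_0,\qquad [\mathfrak a]_{C^{0,1}_x}\le \varepsilon_0 .
\]
The choice $\kappa^{-1}\sim \|u\|_\infty+[\mathfrak a]^{1/(1+\mathfrak p)}+\|f\|_\infty^{1/(1+\mathfrak p)}$ accounts for the right-hand side of Theorem~\ref{Thm01}. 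This is consistent because, with $\mathfrak p<0$, the singular branch $|\xi|^{\mathfrak p}$ dominates the diffusion for large slopes only up to the factor $\mathfrak a$. Since the operator is not translation invariant in $Du$, we work throughout with the family
\[
w(x,t)=u(x,t)-\langle \vec q,x\rangle,
\]
which solves
\[
w_t-\mathcal H(x,t,Dw+\vec q)\,\Delta_p^{\mathrm N}(w+\langle\vec q,\cdot\rangle)=f .
\]
The normalized $p$-Laplacian is uniformly elliptic with ellipticity constants $\min(1,p-1)$ and $\max(1,p-1)$. The only inhomogeneity is therefore the scalar weight $\mathcal H(x,t,Dw+\vec q)$, which is bounded below by $\min(1,\mathfrak a_-)|Dw+\vec q|^{\mathfrak q}$-type quantities when $|Dw+\vec q|\ge1$, and by $|Dw+\vec q|^{\mathfrak p}\ge 1$ when it is $\le1$.

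\textbf{Step 1 (Lipschitz estimates uniform in $\vec q$).} Following Ishii--Lions, I would double variables with
\[
\Phi=w(x,t)-w(y,s)-L\omega(|x-y|)-\tfrac{K}{2}|x-x_0|^2-\tfrac K2|y-x_0|^2-\tfrac K2(t-t_0)^2-\tfrac K2 (s-t_0)^2 ,
\]
where $\omega(r)=r-\omega_0 r^{3/2}$, and apply the parabolic Jensen--Ishii lemma. At a positive maximum the gradients are $\eta_1,\eta_2\approx L\omega'\,e+O(K)$, with $|\eta_i+\vec q|$ possibly tiny or huge. Split into two cases. If $|\vec q|$ is large relative to $L$, then $|\eta_i+\vec q|$ is comparable to $|\vec q|$, and the equation divided by $\mathcal H$ is uniformly parabolic with bounded right side. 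In that case the Lipschitz estimate for uniformly parabolic equations, applied after dividing by the weight, closes the argument. The contribution $\mathcal H(x)-\mathcal H(y)\lesssim [\mathfrak a]|x-y||\xi|^{\mathfrak q}$ is absorbed thanks to the smallness of $[\mathfrak a]$. If $|\vec q|\lesssim L$, then $|\eta_i+\vec q|\gtrsim L/2$ for a suitable choice, and the negative eigenvalue $-L\omega_0|x-y|^{-1/2}$ in direction $e$ beats all the other terms. Also $\mathcal H\ge \min\{|\eta|^{\mathfrak p},\dots\}$ bounded below since $|\eta|\le CL$ and $\mathfrak p>-1$. Both time derivatives cancel up to $K$-terms. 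This yields $|Dw|\le C$ in space, with $C$ independent of $\vec q$, and time Hölder regularity of order $1/2$ by comparison with barriers.

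\textbf{Step 2 (approximation lemma).} Given $\delta>0$, there is $\varepsilon_0$ such that if $w$ solves the translated equation with small data and $\|w\|\le1$, then there is $h$ with $\|w-h\|_{L^\infty(Q_{3/4})}\le\delta$. Here $h$ solves a limiting equation $h_t=\bar{\mathcal H}(Dh+\vec q)\Delta_p^{\mathrm N}(h+\vec q\cdot x)$, with frozen coefficient $\bar{\mathfrak a}$ (or, after normalizing a large $|\vec q|$, a uniformly parabolic equation with constant coefficients). This is proved by contradiction, using the compactness from Step 1 and stability of viscosity solutions. The limit enjoys $C^{1,\beta}$ estimates uniformly in $\vec q$, by \cite{FZ23}/\cite{IJS19} in the homogeneous case, or by Krylov--Safonov/Wang when $|\vec q|\gg1$.

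\textbf{Step 3 (iteration with intrinsic scaling).} Choose $\rho$ small and $\alpha<\min\{\beta,1/(1-\mathfrak p)\}$. By induction, find affine functions $\ell_k=a_k+\vec q_k\cdot x$ with
\[
\sup_{Q^{\theta}_{\rho^k}}|u-\ell_k|\le \rho^{k(1+\alpha)},\qquad |\vec q_{k+1}-\vec q_k|\le C\rho^{k\alpha}.
\]
The cylinders are intrinsic: $Q^\theta_r=B_r\times(-\theta r^{2},0]$ with $\theta$ adapted to $\rho^{k\alpha}$. The rescaling is
\[
u_{k}(x,t)=\frac{(u-\ell_k)(\rho^k x,\rho^{2k}\rho^{-k\alpha\mathfrak p}t)}{\rho^{k(1+\alpha)}} .
\]
Under it, the weight becomes $\rho^{-k\alpha\mathfrak p}\mathcal H$ evaluated at $\rho^{k\alpha}(Du_k+\vec q_k\rho^{-k\alpha})$, and the new source is $\rho^{k(1-\alpha)-k\alpha\mathfrak p}f$. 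This exponent is $\ge0$ precisely when $\alpha\le 1/(1-\mathfrak p)$, which explains the restriction in Theorem~\ref{Thm01}. The $\mathfrak q$-term picks up the factor $\rho^{k\alpha(\mathfrak q-\mathfrak p)}\le1$, and $[\mathfrak a]$ only improves. So $u_k$ lies in the same class as in Step 2, uniformly in the translation $\vec q_k\rho^{-k\alpha}$. Convergence of $\vec q_k$ then gives $Du$ Hölder at each point, and the $t$-estimate $|u(x,t)-u(x,s)|\lesssim|t-s|^{(1+\alpha)/2}$ follows from the same cylinders. A standard alternative handles small gradients (when $|\vec q_k|\le\rho^{k\alpha}$), switching to the scaling in which the singular term is dominant.

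\textbf{Main obstacle.} The hardest part is Step 1 (and hence Step 2): obtaining Lipschitz bounds \emph{uniform in the translation} $\vec q$. In that setting $\mathcal H(\cdot,Dw+\vec q)$ may degenerate to zero (the $\mathfrak q$-part for $\mathfrak q>0$) or blow up (the $\mathfrak p$-part) at different points, and the two phases may switch inside the doubling argument. I would first divide the viscosity inequalities by $\mathcal H(x,t,\eta_1+\vec q)$ and $\mathcal H(y,s,\eta_2+\vec q)$, respectively. I would then control the resulting mismatch of the time-derivative terms $\sigma_1/\mathcal H_1-\sigma_2/\mathcal H_2$ via the quotient bound $\mathcal H_1/\mathcal H_2\in[c,C]$, which holds when $|\eta_1-\eta_2|\ll|\eta_i+\vec q|$.
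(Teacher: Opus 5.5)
\paragraph{Main gap: Step~1 and its use in Step~3.}
You claim a spatial Lipschitz bound for $w=u-\vec q\cdot x$ that is uniform in $\vec q$. Your iteration needs that uniformity, because the translations $\vec q_k\rho^{-k\alpha}$ in your rescaled problems are unbounded as soon as $Du(0,0)\neq0$. No such bound holds in this generality, and the paper states explicitly that it is unavailable.

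Assumption (A1) allows $\mathfrak p\le\mathfrak q<0$. In that case, for bounded $Dw$ one has $\mathcal H(x,t,Dw+\vec q)\lesssim(1+\mathfrak a_+)|\vec q|^{\mathfrak q}\to0$ as $|\vec q|\to\infty$. The translated equation then becomes a vanishing-diffusion perturbation of $w_t=\tilde f$ on a unit time interval. A small-amplitude, rapidly oscillating profile is not smoothed, so no $\vec q$-independent Lipschitz constant can exist.

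Your Case~1 hides this. After dividing by $\mathcal H$ the second-order part is uniformly elliptic, but $w_t$ now carries the exploding coefficient $1/\mathcal H$, and $\tilde f/\mathcal H$ is not bounded.

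The fix in your last paragraph also fails. The time component $\tau$ produced by Jensen--Ishii has no a priori bound, so $\tau(1/\mathcal H_1-1/\mathcal H_2)$ is not controlled by a ratio bound $\mathcal H_1/\mathcal H_2\in[c,C]$. The paper instead adds the two viscosity inequalities \emph{before} dividing, so that $\tau$ cancels exactly, and only then divides by $|\xi_1|^{\mathfrak p}$. Even so, it proves the translated Lipschitz bound (Lemma~\ref{LocalLiplemaprobtransl}) only for $2\le|\vec q|\le\mathrm K_0$ and under smallness of $\|w\|_{L^\infty(Q_1)}$. That proof passes through a preliminary H\"older estimate (Lemma~\ref{Localholderlemaprobtransl}) seeded by the crude, $\vec q$-dependent bound \eqref{est3.2}.

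\paragraph{The missing idea: the method of alternatives.}
This method is built precisely so that uniformity in $\vec q$ is never needed. The paper measures flatness on $Q^{(1-\theta)^j}_{\rho^j}$, with slopes normalized by $(1-\theta)^j$.

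While $|\ell_j|\le2(1-\theta)^j$, the function $v_j=u_j+\vec q\cdot x$ with $|\vec q|\le2$ solves an \emph{untranslated} equation of the same class with $\operatorname{osc}v_j\le5$. So only three ingredients enter: the untranslated Lipschitz bound (Lemma~\ref{LocalLiplemaprob}), the approximation Lemma~\ref{Approximationlemma}, and the $C^{1,\alpha}$ theory of \cite{FZ23}.

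At the first index where $|\ell_j|>2(1-\theta)^j$, the increment bound $|\ell_j-\ell_{j-1}|\le\mathrm C_1(1-\theta)^{j-1}$ confines $\vec q=\ell_j/(1-\theta)^j$ to $2\le|\vec q|\le(\mathrm C_1+2)(1-\theta)^{-1}$. The flatness gives $\|u_j\|_{L^\infty(Q_1)}\le\eta_1$. Lemma~\ref{LocalLiplemaprobtransl} therefore applies on this bounded range and yields $|Dv_j|\ge1$ in $Q_{3/4}$. The equation is then uniformly parabolic and smooth in the gradient, and the estimates of \cite{Lieberman96} finish the argument at that fixed scale.

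Your ``standard alternative'' treats the easy regime, small relative slope. The delicate regime is large relative slope, where one must \emph{stop} iterating rather than continue with ever larger translations.

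\paragraph{Two smaller slips.}
First, the power of $\rho$ in front of $f$ after your rescaling is $k\bigl(1-\alpha(1+\mathfrak p)\bigr)$. This is nonnegative for every $\alpha\le1$ when $\mathfrak p<0$, so it does not explain the restriction $\alpha<1/(1-\mathfrak p)$. Second, $u_\kappa(x,t)=\kappa u(x,\kappa^{\mathfrak p}t)$ multiplies $\mathfrak a$ by $\kappa^{\mathfrak p-\mathfrak q}\ge1$ for $\kappa\le1$. By itself it therefore cannot make $[\mathfrak a]_{C^{0,1}_x}$ small; a spatial dilation is also needed.
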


Our result complements the estimates for doubly degenerate models obtained in \cite{BessaDaSilvaSa25} and extends, to the mixed-growth setting, the regularity theory developed by Fang--Zhang in \cite{FZ23}. In particular, it provides a further contribution to gradient regularity for nonhomogeneous problems of this type.

\subsubsection*{
Challenges and strategies for the proof of the Theorem \ref{Thm01}}

The proof of Theorem \ref{Thm01} combines the method of alternatives (cf. \cite{Attouchi20,AttouRuos20}) with an improvement-of-flatness scheme. The \textit{method of alternatives} is an iterative procedure yielding increasingly accurate affine approximations of solutions to \eqref{Problem} on intrinsically scaled cylinders. The iteration is governed by two competing regimes, the \textit{Degenerate Alternative} and the \textit{Smooth Alternative}, and terminates once the latter occurs.

More precisely, we show that there exist universal constants $\rho,\eta_{1}>0$ and $\theta\in(0,1)$, with $\rho<(1-\theta)^{1+\mathfrak{p}}$, such that, under the smallness condition
\[
\max\{[\mathfrak{a}]_{C^{0,1}_{x}(Q_{1})},\|f\|_{L^{\infty}(Q_{1})}\} \leq \eta_{1},
\]
one of the following alternatives holds:
\begin{enumerate}
\item[\checkmark] \textbf{Degenerate Alternative:} For every $j \in \mathbb{N}$, there exists $\ell_{j} \in \mathbb{R}^n$ with $|\ell_{j}| \leq \mathrm{C}_0(1-\theta)^{j}$ such that
\[
\operatornamewithlimits{osc}_{(x,t) \in Q^{\lambda_j}_{\rho^{j}}} \big( u(x,t) - \ell_{j} \cdot x \big)
\leq \eta_{1}\rho^{j}\lambda_j,
\]
where $\lambda_j := (1-\theta)^j$ and
\[
Q^{\lambda_j}_{\rho^{j}} := B_{\rho^{j}} \times \big(-\rho^{2j}\lambda_j^{-\mathfrak{p}},0\big].
\]
In particular, an improvement of flatness holds at all scales.

\item[\checkmark] \textbf{Smooth Alternative:} The iteration stops at some step $k_0$, namely $|\ell_{k_0}| \geq \mathrm{C}_0(1-\theta)^{k_0}$. In this case, the gradient of $u$ remains uniformly bounded away from zero in a suitable cylinder, allowing the use of classical regularity results for uniformly parabolic equations with smooth coefficients (see \cite[Theorem 1.1]{LSU68} and \cite[Lemma 12.13]{Lieberman96}).
\end{enumerate}

In contrast to the doubly degenerate case treated in \cite{BessaDaSilvaSa25}, no uniform spatial Lipschitz estimate is available for deviations of the form $u - \vec{q}\cdot x$. In that setting, writing
\[
u = \vec{q}\cdot x + (u - \vec{q}\cdot x),
\]
one may stop the iteration once the slope of the affine approximation exceeds the corresponding Lipschitz bound, thereby obtaining a positive lower bound for $|Du|$.

In the present framework, however, where singular behavior arises in the leading term of $\mathcal{H}$, such Lipschitz control is not preserved under these translations.

Accordingly, our approach requires a refined analysis of translated profiles—namely, functions obtained by subtracting affine functions—in the presence of mixed singular and degenerate terms. The proof proceeds as follows:
\begin{itemize}
\item[1.] We first apply the Jensen--Ishii method to derive spatial Lipschitz estimates for the translated equation \eqref{Problem} under smallness assumptions on $f$ and $[\mathfrak{a}]_{C^{0,1}_x}$ (Lemma \ref{LocalLiplemaprobtransl}).
\item[2.] In this regime, a compactness and approximation argument yields affine approximations in intrinsic cylinders (Lemma \ref{improvflat1}), leading to an iterative scheme on $\rho^{j}$-scaled cylinders (Lemma \ref{improvflat2}).
\item[3.] Finally, after a standard normalization and scaling reduction, we combine the method of alternatives with an improvement-of-flatness argument to complete the proof of Theorem \ref{Thm01}.
\end{itemize}

\noindent\textbf{Main strengths of the manuscript}

We emphasize several key features of our work that underscore its mathematical significance and distinguish it from recent contributions on models with nonhomogeneous degeneracy:

\begin{itemize}
\item[\checkmark] Analysis of a previously open regime involving simultaneous singular and degenerate behavior ($\mathfrak{p}<0$) in non-divergence form equations;
\item[\checkmark] Establishment of gradient H\"older regularity for viscosity solutions under minimal structural assumptions;
\item[\checkmark] Introduction of a novel analytical framework combining Jensen--Ishii techniques, intrinsic scaling, and a nonstandard improvement-of-flatness scheme (cf. \cite{AttouRuos20});
\item[\checkmark] Extension and unification of recent results, including homogeneous and doubly degenerate cases (cf. \cite{BessaDaSilvaSa25});
\item[\checkmark] Development of conceptual and technical tools applicable to broader classes of nonlinear parabolic problems (cf. \cite{DeF20} and \cite{FZ23}).
\end{itemize}

\vspace{0.3cm}

\noindent\textbf{Perspectives and future directions}

We highlight several directions that merit further investigation within this line of research:
\begin{itemize}
\item[\checkmark] Study of boundary regularity and global estimates in domains with low regularity;
\item[\checkmark] Investigation of fully nonlinear counterparts and stochastic interpretations;
\item[\checkmark] Derivation of sharp regularity results and optimal H\"older exponents within the mixed-growth regime;
\item[\checkmark] Applications to nonstandard diffusion models and to problems arising in image processing and game theory.
\end{itemize}

The paper is organized as follows. Section \ref{Sec2} collects preliminaries and auxiliary results. Section \ref{Sec3} establishes spatial Lipschitz regularity. Section \ref{Sec4} develops the method of alternatives and concludes the proof of Theorem \ref{Thm01}.


\section{Preliminaries}\label{Sec2}

In this section, we fix notation and recall basic definitions and auxiliary results used throughout the paper. For $x_0 \in \mathbb{R}^n$, $t_0 \in \mathbb{R}$, and $r>0$, we denote by $B_r(x_0)$ the open ball of radius $r$ centered at $x_0$, namely
\[
B_r(x_0) := \{x \in \mathbb{R}^n : |x - x_0| < r\}.
\]
We define the parabolic cylinder
\[
Q_r(x_0,t_0) := B_r(x_0) \times (t_0 - r^2, t_0],
\]
and the intrinsic (rescaled) cylinders
\[
Q_r^\lambda(x_0,t_0) := B_r(x_0) \times (t_0 - r^2 \lambda^{-\mathfrak{p}}, t_0],
\]
which account for the degeneracy in \eqref{Problem}. When $x_0 = 0$ and $t_0 = 0$, we omit the centers.

Given $T>0$ and an open set $\Omega \subset \mathbb{R}^n$, set $\Omega_T := \Omega \times (-T,0]$ and define the parabolic boundary
\[
\partial_{\mathrm{par}} \Omega_T := (\partial \Omega \times [-T,0)) \cup (\Omega \times \{t=-T\}).
\]
The parabolic distance between $P_1=(x,t)$ and $P_2=(y,s)$ is defined by
\[
d(P_1,P_2) := |x-y| + |t-s|^{1/2}.
\]

For parabolic Hölder spaces, let $\alpha \in (0,1]$. We define
\[
[u]_{C^{\alpha,\alpha/2}(Q_r)} := \sup_{\substack{(x,t),(y,s)\in Q_r \\ (x,t)\neq (y,s)}} 
\frac{|u(x,t)-u(y,s)|}{d((x,t),(y,s))^\alpha},
\]
\[
[u]_{C^{0,\alpha}_x(Q_r)} := \sup_{\substack{(x,t),(y,t)\in Q_r \\ x\neq y}} 
\frac{|u(x,t)-u(y,t)|}{|x-y|^\alpha}, 
\quad
[u]_{C^{0,\alpha}_t(Q_r)} := \sup_{\substack{(x,t),(x,s)\in Q_r \\ t\neq s}} 
\frac{|u(x,t)-u(x,s)|}{|t-s|^\alpha},
\]
and
\[
\|u\|_{C^{\alpha,\alpha/2}(Q_r)} := \|u\|_{L^\infty(Q_r)} + [u]_{C^{\alpha,\alpha/2}(Q_r)}.
\]

The space $C^{1+\alpha,(1+\alpha)/2}(Q_r)$ consists of functions with finite norm
\[
\|u\|_{C^{1+\alpha,(1+\alpha)/2}(Q_r)} 
:= \|u\|_{L^\infty(Q_r)} + \|Du\|_{L^\infty(Q_r)} + [u]_{C^{1+\alpha,(1+\alpha)/2}(Q_r)},
\]
where
\[
[u]_{C^{1+\alpha,(1+\alpha)/2}(Q_r)} 
:= [Du]_{C^{\alpha,\alpha/2}(Q_r)} 
+ \sup_{\substack{(x,t),(x,s)\in Q_r \\ t\neq s}} 
\frac{|u(x,t)-u(x,s)|}{|t-s|^{(1+\alpha)/2}}.
\]

We next recall the notion of viscosity solution used throughout (cf. \cite{Attouchi20,AttouRuos20}).

\begin{definition}[Viscosity solution]
A locally bounded upper (respectively, lower) semicontinuous function $u$ in $Q_1$ is called a viscosity subsolution (respectively, supersolution) of \eqref{Problem} if, for every $(x_0,t_0)\in Q_1$, one of the following holds:
\begin{enumerate}
\item[(i)] For every $\phi \in C^{2,1}(Q_1)$ such that $u-\phi$ attains a local maximum (respectively, minimum) at $(x_0,t_0)$ and $D\phi(x_0,t_0)\neq 0$,
\[
\partial_t \phi(x_0,t_0) - \mathcal{H}(x_0,t_0,D\phi(x_0,t_0)) \Delta_p^{\mathrm{N}}\phi(x_0,t_0) \leq f(x_0,t_0),
\]
\[
\text{(respectively, } \partial_t \phi(x_0,t_0) - \mathcal{H}(x_0,t_0,D\phi(x_0,t_0)) \Delta_p^{\mathrm{N}}\phi(x_0,t_0) \geq f(x_0,t_0)\text{)}.
\]
\item[(ii)] There exist $\delta_1>0$ and $\phi \in C^{1}((t_0-\delta_1,t_0+\delta_1))$ such that
\[
\begin{cases}
\phi(t_0)=0,\\
u(x_0,t_0) \geq u(x_0,t) - \phi(t), \quad \forall t \in (t_0-\delta_1,t_0+\delta_1),\\
\displaystyle \sup_{t \in (t_0-\delta_1,t_0+\delta_1)} (u(x,t)-\phi(t)) \text{ is constant near } x_0,
\end{cases}
\]
respectively,
\[
\begin{cases}
\phi(t_0)=0,\\
u(x_0,t_0) \leq u(x_0,t) - \phi(t), \quad \forall t \in (t_0-\delta_1,t_0+\delta_1),\\
\displaystyle \inf_{t \in (t_0-\delta_1,t_0+\delta_1)} (u(x,t)-\phi(t)) \text{ is constant near } x_0,
\end{cases}
\]
and
\[
\phi'(t_0) \leq f(x_0,t_0) \quad \text{(respectively, } \phi'(t_0) \geq f(x_0,t_0)\text{)}.
\]
\end{enumerate}
A continuous function $u$ is a viscosity solution if it is both a subsolution and a supersolution.
\end{definition}

We also introduce the parabolic sub- and superjets of a function $v$ at $(x,t)$:
\[
\mathcal{J}^{\pm}v(x,t)
:= \left\{
\begin{aligned}
(\tau,\xi,X)&\in\mathbb{R}\times\mathbb{R}^n\times\mathrm{Sym}(n):(\tau,\xi,X)=(\phi_t,D\phi,D^2\phi)(x,t),\\
\phi&\in C^{2,1},\ \text{touches } v \text{ from below (above) at }(x,t)
\end{aligned}
\right\}.
\]
and the corresponding closures
\[
\overline{\mathcal{J}}^{\pm}v(x,t)
:= \left\{
\begin{aligned} (\tau,\xi,X): \exists (x_k,t_k)\to(x,t),\ 
(\tau_k,\xi_k,X_k)\in \mathcal{J}^{\pm}v(x_k,t_k),\\
(\tau_k,\xi_k,X_k)\to(\tau,\xi,X),\
v(x_k,t_k)\to v(x,t)
\end{aligned}\right\}.
\]

We conclude by recalling Jensen--Ishii's lemma, a key tool for compactness arguments in nonlinear PDEs; see \cite[Theorem 12.2]{Crandall} and \cite[Theorem 8.3]{CrandallIshiiLions}.

\begin{lemma}[\bf Jensen--Ishii's Lemma]\label{JensenIshii}
Let $u_i$ be an upper semicontinuous function in $Q_1$ for $i=1,\ldots,k$. Let $\varphi$ be defined on $(B_1)^k \times (-1,0)$ such that the function 
\[
(x_1,\dots,x_k,t) \mapsto \varphi(x_1,\dots,x_k,t)
\]
is once continuously differentiable in $t$ and twice continuously differentiable in $(x_1,\dots,x_k) \in (B_1)^k$. Suppose that 
\[
w(x_1,\dots,x_k,t) \coloneqq u_1(x_1,t) + \cdots + u_k(x_k,t) - \varphi(x_1,\dots,x_k,t)
\]
attains a local maximum at $(\bar{x}_1,\dots,\bar{x}_k,\bar{t}) \in (B_1)^k \times (-1,0)$. Assume, moreover, that there exists $r > 0$ such that for every $\mathrm{M}_{\star} > 0$ there exists a constant $\mathrm{C}_{\star}$ such that, for $i=1,\ldots,k$,
\[
b_i \leq \mathrm{C}_{\star} \quad \text{whenever } (b_i,\vec{q}_i,X_i) \in \mathcal{J}^{+}u_i(x_i,t),
\]
\[
|x_i - \bar{x}_i| + |t-\bar{t}| \leq r, \quad \text{and} \quad |u_i(x_i,t)| + |\vec{q}_i| + \|\mathrm{X}_i\| \leq \mathrm{C}_{\star}.
\]
Then, for each $\varepsilon > 0$, there exist $\mathrm{X}_i \in \mathrm{Sym}(n)$ such that:
\begin{itemize}
\item[(i)] $(b_i, D_{x_i}\varphi(\bar{x}_1,\ldots,\bar{x}_k),\mathrm{X}_i) \in \overline{\mathcal{J}}^{+}u_i(\bar{x}_i,\bar{t})$ for $i=1,\dots,k$,
\item[(ii)] $- \left( \frac{1}{\varepsilon} + \|\mathrm{A}\| \right) \mathrm{Id}_n \leq \begin{pmatrix} \mathrm{X}_1 & & 0 \\ & \ddots & \\ 0 & & \mathrm{X}_k \end{pmatrix} \leq \mathrm{A} + \varepsilon \mathrm{A}^2$,
\item[(iii)] $b_1 + \cdots + b_k = \varphi_t(\bar{x}_1,\ldots,\bar{x}_k,\bar{t}),$
\end{itemize}
where $\mathrm{A} = D^2 \varphi(\bar{x}_1,\ldots,\bar{x}_k,\bar{t})$.
\end{lemma}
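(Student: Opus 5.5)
The last statement above is the Jensen--Ishii Lemma (Lemma \ref{JensenIshii}). It is the parabolic Theorem of Sums, and the plan is to derive it from the elliptic version in \cite[Theorem 3.2]{CrandallIshiiLions}. The route goes through sup-convolution in space--time together with Jensen's lemma, as in \cite[Theorem 8.3]{CrandallIshiiLions}.

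\textbf{Reductions.} First, replace $\varphi$ by its second-order Taylor polynomial in $(x_1,\dots,x_k)$ plus its first-order polynomial in $t$ at $(\bar x,\bar t)$. A quartic penalization $|x-\bar x|^4+|t-\bar t|^2$ is added to make the maximum strict; this does not change $D\varphi$, $\varphi_t$ or $\mathrm{A}$ at the point. We may then assume $w$ has a strict maximum, equal to $0$, at $(\bar x,\bar t)$, and that each $u_i$ is bounded above near $(\bar x_i,\bar t)$.

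\textbf{Regularization.} Each $u_i$ is replaced by its sup-convolution in the spatial variable,
\[
u_i^{\delta}(x,t)=\sup_{y}\Big(u_i(y,t)-\tfrac{|x-y|^2}{2\delta}\Big).
\]
This function is semiconvex in $x$, with $D_x^2 u_i^\delta\ge -\delta^{-1}\mathrm{Id}_n$, and it converges to $u_i$ in the upper relaxed sense. For $t$, one introduces independent time variables $t_i$ and penalizes $\sum_{i<j}|t_i-t_j|^2/\epsilon$. This makes the function of $(x_1,t_1,\dots,x_k,t_k)$ semiconvex up to the time direction, where the one-sided bound on $b_i$ supplies the needed control.

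\textbf{Perturbation.} We apply Jensen's lemma (Aleksandrov's theorem for semiconvex functions) to the perturbed maximum. Adding small linear perturbations $\langle \zeta,(x,t)\rangle$ yields a set of positive measure of maximum points. At these points every $u_i^\delta$ is twice differentiable in $x$ and differentiable in $t$, which produces genuine jets $(b_i^\delta,\xi_i^\delta,X_i^\delta)\in\mathcal J^{+}u_i^\delta$. The matrix inequality $\mathrm{diag}(X_i^\delta)\le \mathrm{A}+\varepsilon\mathrm{A}^2$ comes from the magic-matrix trick. One applies the sup-convolution with parameter $\varepsilon$ to the quadratic $\langle \mathrm{A}x,x\rangle$, which produces the bound $\mathrm{A}(\mathrm{Id}-\varepsilon\mathrm{A})^{-1}\le \mathrm{A}+\varepsilon\mathrm{A}^2$ up to terms that vanish. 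The lower bound $-(1/\varepsilon+\|\mathrm{A}\|)$ follows from the semiconvexity. Summing the first-order conditions in $t$ gives $\sum_i b_i^\delta=\varphi_t$ plus a vanishing error.

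\textbf{Passage to the limit and main obstacle.} Sup-convolution jets translate back to jets of $u_i$ at shifted points with the same $(b,\xi,X)$. Letting the perturbations, $\delta$ and $\epsilon$ go to $0$ then gives elements of $\overline{\mathcal J}^{+}u_i(\bar x_i,\bar t)$, by compactness of the matrices, which satisfy (i)--(iii). The main obstacle is uniform boundedness of the time components $b_i^\delta$. The $X$'s are bounded two-sidedly and the $\xi$'s converge, but only $\sum b_i$ is controlled. This is exactly where the hypothesis $b_i\le \mathrm{C}_\star$ enters. Each $b_i$ is bounded above, and their sum is fixed, so each is also bounded below by $\varphi_t-(k-1)\mathrm{C}_\star$. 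This gives compactness and closes the argument.
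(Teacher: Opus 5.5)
\textbf{Verdict: genuine gap in the time variable.} The paper gives no proof of this lemma; it quotes it from the literature (Crandall--Ishii--Lions, Theorem 8.3). Your outline is the standard architecture behind that theorem: quadratic reduction, sup-convolution, Jensen's lemma plus Aleksandrov, the magic property to pull jets back, and $b_i\le\mathrm{C}_\star$ together with $\sum_i b_i=\varphi_t$ for compactness of the time components. You place the hypothesis correctly. But the handling of the time variable, which is exactly what separates the parabolic from the elliptic statement, does not work as written.

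\textbf{The main gap: no semiconvexity in $t$.} Jensen's lemma and Aleksandrov's theorem need semiconvexity in every variable in which you perturb and differentiate, including $t$.
\begin{itemize}
\item Your $u_i^\delta$ is regularized only in $x$, so in $t$ it is merely upper semicontinuous. Subtracting the smooth penalty $\sum_{i<j}|t_i-t_j|^2/\epsilon$ does not change this.
\item The hypothesis cannot fill the hole. The bound $b_i\le\mathrm{C}_\star$ constrains only the time component of superjets, and only where a superjet with $|u_i|+|\vec q_i|+\|\mathrm{X}_i\|\le\mathrm{M}_\star$ is already known to exist. It gives no semiconvexity in $t$, and no Lipschitz control of the gradient map on the set of maximum points, which is what the measure estimate in Jensen's lemma rests on.
\item Hence your claim that, on a set of positive measure, each $u_i^\delta$ is differentiable in $t$ and carries a jet $(b_i^\delta,\xi_i^\delta,\mathrm{X}_i^\delta)$ is unsupported.
\end{itemize}
The missing ingredient is to regularize in time as well, for instance $\sup_{y,s}\{u_i(y,s)-|x-y|^2/(2\lambda)-|t-s|^2/(2\delta)\}$, which is semiconvex in $(x,t)$.
\begin{itemize}
\item Each $u_i$ gets its own time shift, so this already decouples the times and no separate doubling is needed.
\item Differentiability of the semiconvex sum at a point forces each summand to be differentiable there. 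Semiconvexity of the other summands then turns the second-order expansion of the sum into a parabolic superjet of each one, with $\sum_i b_i=\varphi_t$ up to the perturbation.
\item As $\delta\to0$, strictness of the maximum and upper semicontinuity drive the maximizers to $(\bar x,\bar t)$, and the time shift satisfies $|s-t|=\delta|b_i|\le C\sqrt{\delta}\to0$. Only then do the recovered jets of $u_i$ lie within distance $r$ of $(\bar x_i,\bar t)$, so that your final compactness argument applies.
\end{itemize}

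\textbf{The spatial sup-convolution parameter must stay fixed.} It has to be $\lambda=(1/\varepsilon+\|\mathrm{A}\|)^{-1}$. This requires first subtracting the affine part of $\varphi$ from the $u_i$, so that $D_x\varphi(\bar x,\bar t)=0$ and the spatial shift $\lambda\xi_i$ tends to $0$. With this $\lambda$:
\begin{itemize}
\item the sup-convolution of $\frac12\langle\mathrm{A}x,x\rangle$ is $\frac12\langle\mathrm{A}(\mathrm{Id}_n-\lambda\mathrm{A})^{-1}x,x\rangle$;
\item the inequality $\mathrm{A}(\mathrm{Id}_n-\lambda\mathrm{A})^{-1}\le\mathrm{A}+\varepsilon\mathrm{A}^2$ holds exactly;
\item semiconvexity gives exactly the lower bound $-1/\lambda$.
\end{itemize}
If you use parameter $\varepsilon$ itself, the upper inequality fails on every positive eigenvalue $a$ of $\mathrm{A}$, with excess $\varepsilon^2a^3/(1-\varepsilon a)$. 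Since $\varepsilon$ is fixed, nothing makes this excess vanish. If instead you send the parameter to $0$, as you do with $\delta$, the lower bound degenerates and the $\mathrm{X}_i$ lose their uniform bound. You then cannot invoke the hypothesis to bound the $b_i$.

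\textbf{The reduction step does not preserve the maximum.} Replacing $\varphi$ by its Taylor polynomial and adding $|x-\bar x|^4+|t-\bar t|^2$ fails, because the remainder is only $o(|x-\bar x|^2+|t-\bar t|)$. You must instead dominate it by $\eta|x-\bar x|^2+h(t)$ with $h\in C^1$ and $h(\bar t)=h'(\bar t)=0$. The term $h$ can be absorbed into $u_1$, which only shifts the time components of its superjets by $h'(t)\to0$. Then let $\eta\to0$ at the end, using the same compactness.
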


We conclude this section by recalling a low-regularity estimate for \eqref{Problem}, namely Lipschitz continuity in the spatial variables and Hölder continuity in time (see \cite[Lemmas 4.3 and 4.5]{BessaDaSilvaSa25}).

\begin{lemma}[{\bf Lower regularity}]\label{LocalLiplemaprob}
Let $u$ be a bounded viscosity solution to \eqref{Problem} in $Q_{1}$. Suppose that structural conditions \(\mathbf{(A1)-(A3)}\) are satisfies. Then,  
\begin{itemize}
\item[(a)] $u\in C^{0,1}(Q_{\frac{7}{8}})$ and there exists a constant $\mathrm{C}=\mathrm{C}\left(n,p,\mathfrak{p},\mathfrak{q},\mathfrak{a}_{-}\right)>0$ such that
\begin{eqnarray*}
[u]_{C^{0,1}_{x}(Q_{\frac{7}{8}})}\leq  \mathrm{C}\left(\|u\|_{L^{\infty}(Q_{1})}+\|u\|_{L^{\infty}(Q_{1})}^{\frac{1}{1+\mathfrak{p}}}+[\mathfrak{a}]_{C^{0,1}_{x}(Q_{1})}+\|f\|_{L^{\infty}(Q_{1})}^{\frac{1}{1+\mathfrak{p}}}\right),
\end{eqnarray*}

\item [(b)] $u\in C^{0,\frac{1+\mathfrak{p}}{2+\mathfrak{p}+\mathfrak{q}}}(Q_{\frac{3}{4}})$ and
\begin{eqnarray*}
[u]_{C^{0,\frac{1+\mathfrak{p}}{2+\mathfrak{p}+\mathfrak{q}}}_{t}(Q_{\frac{3}{4}})}\leq  \mathrm{C},
\end{eqnarray*}
where $\mathrm{C}=\mathrm{C}\left(n,p,\mathfrak{p},\mathfrak{q},\mathfrak{a}_{-},\|u\|_{L^{\infty}(Q_{1})}[\mathfrak{a}]_{C^{0,1}_{x}(Q_{1})},\|f\|_{L^{\infty}(Q_{1})}\right)>0$.
\end{itemize}

\end{lemma}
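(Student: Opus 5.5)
The plan follows the Ishii--Lions doubling-of-variables scheme, using Lemma \ref{JensenIshii} for part (a), and then a comparison/barrier argument in time for part (b). This is the route taken in \cite[Lemmas 4.3 and 4.5]{BessaDaSilvaSa25}; the changes for $\mathfrak{p}<0$ are in how the singular weight $|\xi|^{\mathfrak{p}}$ is handled.

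\textbf{Part (a): spatial Lipschitz bound.} Fix $(x_0,t_0)\in Q_{7/8}$ and consider
\[
\Phi(x,y,t)=u(x,t)-u(y,t)-L\varphi(|x-y|)-\tfrac{M}{2}|x-x_0|^2-\tfrac{M}{2}|y-x_0|^2-\tfrac{M}{2}(t-t_0)^2 ,
\]
where $\varphi(r)=r-\kappa r^{1+\gamma}$ for small $r$, with $\gamma\in(0,1)$. Choose $M\sim\|u\|_{\infty}$ so that the localization terms dominate off a small neighbourhood. Suppose for contradiction that $\max\Phi>0$. Then the maximizer satisfies $\bar x\neq\bar y$, and $|\bar x-x_0|,|\bar y-x_0|$ are small.

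Applying Lemma \ref{JensenIshii} gives jets $(b_1,\xi_1,X)$ and $(b_2,\xi_2,Y)$, with
\[
\xi_{1,2}=L\varphi'(\delta)\,e+O(M), \qquad e=\frac{\bar x-\bar y}{|\bar x-\bar y|}, \qquad b_1-b_2=M(\bar t-t_0).
\]
For $L\gg M$ we get $|\xi_i|\simeq L$, so both gradients are nonzero and alternative (i) of the definition applies. Divide the sub- and supersolution inequalities by $\mathcal H(\bar x_i,\bar t,\xi_i)$. This weight lies between $c L^{\mathfrak p}$ and $C(L^{\mathfrak p}+\mathfrak a_+L^{\mathfrak q})$. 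Subtracting the two inequalities yields
\[
\mathrm{tr}\,A(\xi_1)X-\mathrm{tr}\,A(\xi_2)Y\ \ge\ -\frac{|b_1|+\|f\|_\infty}{\mathcal H_1}-\frac{|b_2|+\|f\|_\infty}{\mathcal H_2}.
\]
The matrix inequality of Lemma \ref{JensenIshii} tested on $(e,-e)$ gives a strongly negative left side, of order $-L\kappa\delta^{\gamma-1}\min(1,p-1)$, plus cross terms of order $ML+M$ controlled as usual.

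\textbf{Main obstacle.} The time derivatives $b_i$ are not individually bounded; only their difference is controlled. To handle this, normalize the equation as
\[
\mathcal H^{-1}u_t-\Delta_p^{\mathrm N}u=\mathcal H^{-1}f .
\]
Since $\mathcal H(\bar x,\cdot)\ne\mathcal H(\bar y,\cdot)$, the difference $\mathcal H^{-1}(\bar x)b_1-\mathcal H^{-1}(\bar y)b_2$ produces a term $b\,|\mathcal H_1^{-1}-\mathcal H_2^{-1}|$. I would control it in two ways.

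First, bound $|b_i|$ a priori from the equation itself: each $b_i\le \mathcal H_i|\mathrm{tr}(\cdot)|+\|f\|_\infty$, which is admissible after the usual $\varepsilon$-regularization.

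Second, use the Lipschitz dependence of $\mathfrak a$ in $x$, which gives
\[
|\mathcal H_1-\mathcal H_2|\lesssim [\mathfrak a]_{C^{0,1}_x}L^{\mathfrak q}\delta+L^{\mathfrak q-1}ML .
\]
This produces the $[\mathfrak a]_{C^{0,1}_x}$ contribution in the estimate.

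With these in hand, the right side is at most $C\big(\|f\|_\infty L^{-\mathfrak p}+M L^{-\mathfrak p}+ML+[\mathfrak a]\big)$. Since $-\mathfrak p<1$, the term $L\kappa\delta^{\gamma-1}$ wins once
\[
L\ \ge\ C\big(\|u\|_\infty+\|u\|_\infty^{1/(1+\mathfrak p)}+[\mathfrak a]+\|f\|_\infty^{1/(1+\mathfrak p)}\big).
\]
The exponent $1/(1+\mathfrak p)$ comes from balancing $L\gtrsim \|f\|_\infty L^{-\mathfrak p}$. This contradicts $\max\Phi>0$, so the Lipschitz bound holds.

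\textbf{Part (b): Hölder continuity in time.} With $L$ from (a) in hand, fix $x_0$ and compare $u$ with barriers
\[
w^\pm(x,t)=u(x_0,t_0)\pm\Big(\varepsilon+\tfrac{\mathrm C_1}{\varepsilon'}|x-x_0|^2+\mathrm C_2(t-t_0)\Big)
\]
in $B_r(x_0)\times(t_0,t_0+\tau)$. Since $|Dw|\sim r/\varepsilon'$ is bounded away from $0$ on the relevant set, $\mathcal H(Dw)\lesssim(r/\varepsilon')^{\mathfrak p}+\mathfrak a_+(r/\varepsilon')^{\mathfrak q}$. Then $w^\pm$ are super- and subsolutions provided $\mathrm C_2\gtrsim\mathcal H\,\varepsilon'^{-1}+\|f\|_\infty$. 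The spatial Lipschitz bound gives the ordering on the lateral boundary with $\varepsilon=Lr$.

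Applying the comparison principle gives
\[
|u(x_0,t)-u(x_0,t_0)|\le Lr+C\tau\big(r^{\mathfrak p}+r^{\mathfrak q}\big)r^{-2}\cdots .
\]
Optimizing $r=\tau^{\beta}$ against the worst power among $\mathfrak p$ and $\mathfrak q$ yields the exponent $\frac{1+\mathfrak p}{2+\mathfrak p+\mathfrak q}$ up to the bookkeeping. At the points where $|Dw|$ vanishes, alternative (ii) of the definition must be checked separately. The barrier has a strict minimum there in $x$, so that case is handled via the time-derivative condition $\mathrm C_2\ge\|f\|_\infty$.
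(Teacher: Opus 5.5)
\textbf{Part (a) does not close as a one-step argument.} In your doubling, $\xi_1-\xi_2=M(\bar x-x_0)+M(\bar y-x_0)$. The only information on $|\bar x-x_0|$ and $|\bar y-x_0|$ comes from $\Phi>0$ and $\|u\|_{L^\infty}$, so $|\xi_1-\xi_2|\lesssim M$ with no decay in $\delta=|\bar x-\bar y|$, and hence $|\hat\xi_1-\hat\xi_2|\lesssim M/L$. On the other hand $\|X\|,\|Y\|\lesssim L\big(|\varphi''(\delta)|+\varphi'(\delta)/\delta\big)\sim L/\delta$, because the tangential eigenvalues of $D^2\varphi(|\cdot|)$ are $\varphi'(\delta)/\delta$.

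Consequently, two terms can only be bounded by $CM/\delta$, not by $C(ML+M)$ as you claim. The first is $\operatorname{tr}\big((\mathcal A(\xi_1)-\mathcal A(\xi_2))Y\big)$. The second is the weight-variation term $\frac{|\mathcal H_1-\mathcal H_2|}{\mathcal H_1}|\operatorname{tr}(\mathcal A(\xi_2)Y)|$; in your normalized form it is $|b|\,|\mathcal H_1^{-1}-\mathcal H_2^{-1}|$ with $|b|\lesssim\mathcal H L/\delta$, and it is present even for $p=2$.

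Your good term is $L\kappa\gamma(1+\gamma)\delta^{\gamma-1}$, which loses to $M/\delta$ as soon as $\delta^{\gamma}\lesssim M/(L\kappa)$. No choice of $\varphi$ repairs this: $L|\varphi''(\delta)|\ge CM/\delta$ on $(0,\delta_0)$ would make $\varphi'$ unbounded near $0$.

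The missing idea is a preliminary H\"older step.
\begin{itemize}
\item First run the doubling with $\varphi(r)=r^{\beta_0}$, $\beta_0\in(0,1)$. There the good term $L\beta_0(1-\beta_0)\delta^{\beta_0-2}$ does beat $M/\delta$, and you get a $\beta_0$-H\"older bound in $x$.
\item In the Lipschitz step, $\Phi>0$ then gives $\frac M2|\bar x-x_0|^2\le u(\bar x,\bar t)-u(\bar y,\bar t)\le C\delta^{\beta_0}$. Hence $|\xi_1-\xi_2|\lesssim (M\delta^{\beta_0})^{1/2}$ and the bad terms drop to $CM^{1/2}\delta^{\beta_0/2-1}$.
\item These are absorbed by $L\kappa\delta^{\gamma-1}$ once $\gamma\le\beta_0/2$ and $L\gg M^{1/2}$.
\end{itemize}
The paper only quotes this lemma, but its Section 3 uses exactly this mechanism: Lemma \ref{Localholderlemaprobtransl} feeds Lemma \ref{LocalLiplemaprobtransl} through $\mathfrak L_1|\bar x-x_0|^2\le 2|\bar x-\bar y|^{\nu}$.

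Two smaller corrections. First, the time components need no separate a priori bound: by Lemma \ref{JensenIshii}(iii) they cancel exactly if you add the two viscosity inequalities before dividing by anything. Second, the $x$-variation of $\mathfrak a$ contributes $[\mathfrak a]_{C^{0,1}_x(Q_1)}\delta\cdot\|Y\|/\mathfrak a_-\sim L[\mathfrak a]_{C^{0,1}_x(Q_1)}/\mathfrak a_-$, not $[\mathfrak a]_{C^{0,1}_x(Q_1)}$. This is the same order in $L$ as the good term, so it is absorbed through the factor $\delta^{1-\gamma}$ (i.e. by localization), not by enlarging $L$.

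\textbf{Part (b) uses a barrier that is not a supersolution when $\mathfrak p<0$.} You assert that $|Dw|$ is bounded away from zero, but $|Dw^{\pm}|=\frac{2\mathrm C_1}{\varepsilon'}|x-x_0|$ vanishes at the vertex. On the punctured ball $Dw^{+}\neq 0$, so alternative (i) of the definition applies, and there
\[
\mathcal H(x,t,Dw^+)\,\Delta_p^{\mathrm N}w^+\ \ge\ (n+p-2)\Big(\frac{2\mathrm C_1}{\varepsilon'}\Big)^{1+\mathfrak p}|x-x_0|^{\mathfrak p}\ \longrightarrow\ \infty\quad (x\to x_0).
\]
Hence $\partial_t w^+-\mathcal H\,\Delta_p^{\mathrm N}w^+\ge f$ fails near $x_0$ for every finite $\mathrm C_2$, and symmetrically for $w^-$. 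Checking the single point $x=x_0$ via alternative (ii) does not address this.

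The barrier has to match the singular homogeneity: use $K|x-x_0|^{\beta}$ with $\beta=\frac{2+\mathfrak p}{1+\mathfrak p}>2$.
\begin{itemize}
\item On $B_r(x_0)$ one has $|Dw|^{\mathfrak p}|D^2w|\le CK^{1+\mathfrak p}$, and, since $\mathfrak q\ge\mathfrak p$, $|Dw|^{\mathfrak q}|D^2w|\le CK^{1+\mathfrak q}$. At the vertex both $Dw$ and $D^2w$ vanish, so $\mathrm C_2\simeq K^{1+\mathfrak q}+\|f\|_{L^\infty}$ suffices.
\item The spatial Lipschitz bound is needed on the bottom $\{t=t_0\}$, via Young's inequality, and forces $K\gtrsim L^{\beta}\varepsilon^{-\frac{1}{1+\mathfrak p}}$.
\item The lateral side only needs $Kr^{\beta}\ge 2\|u\|_{L^\infty}$.
\end{itemize}
Minimizing $\varepsilon+C\varepsilon^{-\frac{1+\mathfrak q}{1+\mathfrak p}}\tau$ then gives exactly the exponent $\frac{1+\mathfrak p}{2+\mathfrak p+\mathfrak q}$ in the statement.
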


\section{The translated problem: spatial Lipschitz regularity}\label{Sec3}

As part of our approach, we establish a precompactness property for solutions to the translated equation
\begin{equation}\label{translatedproblem}
 w_{t} - \mathcal{H}(x,t,Dw+\vec{q}) \Delta_{p}^{\mathrm{N}}(w+\vec{q}\cdot x)=\tilde{f}(x,t)\quad \text{in}\quad Q_1,
\end{equation}
where $\vec{q}\in\mathbb{R}^{n}$.

Let $w$ be a viscosity solution to \eqref{translatedproblem}. Under suitable bounds on $\|w\|_{L^{\infty}(Q_{1})}$ and $\|\tilde{f}\|_{L^{\infty}(Q_{1})}$, and depending on $|\vec{q}|$, we derive a spatial Lipschitz estimate. More precisely, if
\[
\|w\|_{L^{\infty}(Q_{1})}\leq 1,\quad [\mathfrak{a}]_{C^{0,1}_{x}(Q_{1})}\leq 1,\quad \|\tilde{f}\|_{L^{\infty}(Q_{1})}\leq 1,\quad |\vec{q}|\geq 1,
\]
then, applying Lemma~\ref{LocalLiplemaprob} to
\[
\tilde{w}(x,t)=w(x,t)+\vec{q}\cdot x,
\]
which solves \eqref{Problem}, we obtain
\begin{eqnarray}
|w(x,t)-w(y,t)|
&\leq& |\tilde{w}(x,t)-\tilde{w}(y,t)| + |\vec{q}|\,|x-y| \nonumber\\
&\leq& \mathrm{C}\big(1+|\vec{q}|+|\vec{q}|^{\frac{1}{1+\mathfrak{p}}}\big)|x-y| \nonumber\\
&\leq& \bar{\mathrm{C}}_{0}|\vec{q}|^{\frac{1}{1+\mathfrak{p}}}|x-y|, \label{est3.2}
\end{eqnarray}
for all $x,y\in Q_{7/8}$ and $t\in (-(7/8)^2,0]$, where $\bar{\mathrm{C}}_{0}>0$ depends only on $n$, $p$, $\mathfrak{p}$, $\mathfrak{q}$, and $\mathfrak{a}_{-}$.

The main result of this section sharpens \eqref{est3.2} under an additional restriction on $|\vec{q}|$.

\begin{lemma}[\bf Lipschitz regularity in space]\label{LocalLiplemaprobtransl}
Assume that $\mathbf{(A1)}$--$\mathbf{(A3)}$ hold. Let $\vec{q}\in\mathbb{R}^{n}$ satisfy $2\leq |\vec{q}|\leq \mathrm{K}_{0}$, for some $\mathrm{K}_{0}=\mathrm{K}_{0}(n,p,\mathfrak{p},\mathfrak{q},\mathfrak{a}_{-})>0$, and let $w$ be a viscosity solution to \eqref{translatedproblem}. There exists $\eta_{1}>0$, depending only on $n$, $p$, $\mathfrak{p}$, $\mathfrak{q}$, and $\mathfrak{a}_{-}$, such that if
\[
\max\{\|w\|_{L^{\infty}(Q_{1})},[\mathfrak{a}]_{C^{0,1}_{x}(Q_{1})},\|\tilde{f}\|_{L^{\infty}(Q_{1})}\}\leq \eta_{1},
\]
then $w$ is locally Lipschitz in space and
\[
|w(x,t)-w(y,t)|\leq |x-y|
\]
for all $(x,t),(y,t)\in Q_{3/4}$.
\end{lemma}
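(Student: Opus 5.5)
We will argue via the Ishii--Lions doubling of variables, in the form of the Jensen--Ishii Lemma \ref{JensenIshii}. The key observation is that when $|Dw|$ is small compared to $|\vec{q}|$, the gradient entering the operator, $Dw+\vec{q}$, has modulus comparable to $|\vec{q}|\in[2,\mathrm{K}_0]$. In that range the diffusion coefficient $\mathcal{H}(x,t,Dw+\vec{q})$ is bounded above and below by constants depending only on $\mathrm{K}_0,\mathfrak{p},\mathfrak{q},\mathfrak{a}_{\pm}$. The translated equation therefore behaves like a uniformly parabolic equation, and the singularity at $Dw+\vec q=0$ never shows up.

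Fix $x_0\in B_{3/4}$, $t_0\in(-(3/4)^2,0]$, and consider
\[
\Phi(x,y,t)=w(x,t)-w(y,t)-L\,\varphi(|x-y|)-\frac{M}{2}|x-x_0|^2-\frac{M}{2}|y-x_0|^2-\frac{M}{2}(t-t_0)^2,
\]
with $\varphi(r)=r-\kappa r^{3/2}$ (a concave profile). We choose $L=1$ (or a small multiple of $\eta_1^{0}$-independent size) and $M$ large universal, so that the localization terms dominate $2\|w\|_\infty\le 2\eta_1$ outside a small neighbourhood. We argue by contradiction: suppose $\sup\Phi>0$, attained at $(\bar x,\bar y,\bar t)$ with $\bar x\neq\bar y$. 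Since $\|w\|_\infty\le\eta_1$, the positivity of the maximum forces $|\bar x-\bar y|$, $|\bar x-x_0|$, $|\bar y-x_0|$ and $|\bar t-t_0|$ to be of size $O(\eta_1^{1/2})$ or smaller. Lemma \ref{JensenIshii} then gives jets $(b_1,\xi_x,X)\in\overline{\mathcal J}^{+}w(\bar x,\bar t)$ and $(b_2,\xi_y,Y)\in\overline{\mathcal J}^{-}w(\bar y,\bar t)$ with $b_1-b_2=M(\bar t-t_0)$, where
\[
\xi_x=L\varphi'(\delta)e+M(\bar x-x_0),\qquad \xi_y=L\varphi'(\delta)e-M(\bar y-x_0),
\]
with $\delta=|\bar x-\bar y|$ and $e=(\bar x-\bar y)/\delta$. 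It also gives the usual matrix inequality, which is strongly negative in the direction $e\otimes e$, of order $L\varphi''(\delta)\le -cL\kappa\delta^{-1/2}$. Because $|\xi_x|,|\xi_y|\le 1+o(1)$ and $|\vec q|\ge2$, we get $|\xi_x+\vec q|,|\xi_y+\vec q|\in[1/2,\mathrm{K}_0+2]$. Hence case (ii) of the definition of viscosity solution is excluded, and case (i) applies with the nonsingular normalized $p$-Laplacian.

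Next we subtract the sub- and supersolution inequalities. Writing $\hat\xi=(\xi+\vec q)/|\xi+\vec q|$ and $A(\xi)=I+(p-2)\hat\xi\otimes\hat\xi$, we obtain
\[
M(\bar t-t_0)\le \mathcal H(\bar x,\bar t,\xi_x+\vec q)\,\mathrm{tr}(A(\xi_x)X)-\mathcal H(\bar y,\bar t,\xi_y+\vec q)\,\mathrm{tr}(A(\xi_y)Y)+2\eta_1 .
\]
We split the right-hand side in the standard way into three pieces:
\begin{itemize}
\item[(a)] $\mathcal H(\bar x,\cdot)\,\mathrm{tr}(A(\xi_x)(X-Y))\le -c\,\lambda_{\min}L\kappa\delta^{-1/2}+CM$, using the ellipticity bound $\mathcal H\ge \lambda_{\min}>0$ valid on the range $[1/2,\mathrm K_0+2]$.
\item[(b)] The difference of coefficients, $|\mathcal H(\bar x,\cdot,\xi_x+\vec q)-\mathcal H(\bar y,\cdot,\xi_y+\vec q)|\,\|Y\|$. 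This is controlled by $[\mathfrak a]_{C^{0,1}_x}\delta+C|\xi_x-\xi_y|\le C(\eta_1\delta+M\eta_1^{1/2})$.
\item[(c)] The difference $\mathrm{tr}((A(\xi_x)-A(\xi_y))Y)$, bounded by $C|\xi_x-\xi_y|\,\|Y\|$.
\end{itemize}

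The main obstacle is bounding $\|Y\|$ (and $\|X\|$) in pieces (b)–(c). The Jensen--Ishii lemma only bounds them by something like $CL\delta^{-1}$ after the usual test-vector argument. We will follow the Imbert--Silvestre/Attouchi--Ruosteenoja device of extracting the bound $\|X\|,\|Y\|\le C(L\delta^{-1}+M)$ together with the strongly negative eigenvalue in direction $e$. Since $|\xi_x-\xi_y|\le CM(\eta_1)^{1/2}\delta^{0}$, and more precisely $|\xi_x-\xi_y|\le M(|\bar x-x_0|+|\bar y-x_0|)$, pieces (b) and (c) are at most $C M\eta_1^{1/2}\delta^{-1}$. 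If this is not absorbed by $\delta^{-1/2}$ directly, we will sharpen the localization by taking the penalization exponent so that $|\bar x-x_0|\lesssim\delta$, e.g. via the standard trick of estimating $M|\bar x-x_0|^2\le 2\eta_1$ and $\delta\le 2\eta_1/L$. Choosing $\kappa$ and then $\eta_1$ small (depending only on $n,p,\mathfrak p,\mathfrak q,\mathfrak a_{\pm}$ via $\mathrm K_0$), the negative term $-cL\kappa\delta^{-1/2}$ dominates all remaining terms, including $M|\bar t-t_0|$ and $2\eta_1$, because $\delta\le C\eta_1$ is tiny. This gives the contradiction, so $\Phi\le 0$. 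Setting $x=x_0$ and using $\varphi(r)\le r$ yields $|w(x_0,t_0)-w(y,t_0)|\le|x_0-y|$ for all $(x_0,t_0),(y,t_0)\in Q_{3/4}$.
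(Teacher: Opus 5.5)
\textbf{There is a genuine gap.} It sits exactly at the step you flagged as the main obstacle, and the repair you sketch does not work.

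After Jensen--Ishii, pieces (b) and (c) are of size $C|\xi_x-\xi_y|\,\|Y\|$ with $\|Y\|\lesssim L\delta^{-1}$. Your localization only gives $|\xi_x-\xi_y|\le M(|\bar x-x_0|+|\bar y-x_0|)\lesssim\sqrt{M\eta_1}$. So these errors are of order $\sqrt{M\eta_1}\,L\,\delta^{-1}$, while the good term is only $-cL\kappa\delta^{-1/2}$. Nothing prevents $\delta=|\bar x-\bar y|$ from being arbitrarily small; you only know $\delta\lesssim\eta_1$. As $\delta\to0$ the error wins for any fixed $\kappa$, $M$, $\eta_1$, so your closing sentence (``because $\delta\le C\eta_1$ is tiny'') has the logic reversed: smallness of $\delta$ works against you.

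The suggested fix is not valid either. The bounds $M|\bar x-x_0|^2\le 2\eta_1$ and $\delta\le 2\eta_1/L$ are two independent upper bounds and imply nothing like $|\bar x-x_0|\lesssim\delta$. Even the sharper structure estimate $\operatorname{tr}(A_xX)-\operatorname{tr}(A_yY)\lesssim\|Z\|\,|\xi_x-\xi_y|^2$ still leaves a term $M\eta_1L\delta^{-1}$. The only way to tie the localization error to $\delta$ is through $\frac M2|\bar x-x_0|^2\le w(\bar x,\bar t)-w(\bar y,\bar t)$. That requires an a priori spatial modulus of continuity of $w$, which your argument never invokes.

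This is exactly why the paper first proves Lemma~\ref{Localholderlemaprobtransl}: $w$ is $\nu$-H\"older in space with constant $1$ on $Q_{7/8}$, a result that itself rests on the crude bound \eqref{est3.2}. Positivity of $\Phi$ then gives $\mathfrak L_1|\bar x-x_0|^2\le 2\delta^{\nu}$, hence $|\xi_1-\xi_2|\lesssim\mathfrak L_1^{1/2}\delta^{\nu/2}$, so the dangerous terms are $O(\mathfrak L_1^{1/2}\delta^{\nu/2-1})$. The profile $\omega(s)=s-k_0s^{\gamma}$ with $\gamma=1-\mathfrak p\nu/2\in(1,1+\nu/2)$ produces a negative term of order $\mathfrak L_2\delta^{\gamma-2}$. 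This dominates those errors as $\delta\to0$, and the remaining terms are absorbed through the smallness of $\mathfrak L_1=4^5\|w\|_{L^\infty(Q_1)}$.

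Within your own setup, even the crude Lipschitz bound \eqref{est3.2} would suffice. It gives $|\bar x-x_0|,|\bar y-y_0|\lesssim(\mathrm K_0^{1/(1+\mathfrak p)}\delta/M)^{1/2}$, so the errors drop to $O(\sqrt{M}\,L\,\delta^{-1/2})$. These are absorbable by $-cL\kappa\delta^{-1/2}$ if you take $M\simeq\|w\|_{L^\infty(Q_1)}$ small rather than large, or by any concave profile $r-\kappa r^{\gamma}$ with $\gamma<3/2$. But some such a priori input is indispensable.

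A minor point: with both penalizations centred at $x_0$, evaluating $\Phi\le0$ at $x=x_0$ only yields $|w(x_0,t_0)-w(y,t_0)|\le|x_0-y|+\frac M2|x_0-y|^2$. Use separate centres $x_0,y_0$, as the paper does, to get constant $1$ directly.
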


To prove this result, we first establish spatial Hölder continuity for solutions to \eqref{translatedproblem}, with exponent $\nu\in(0,1)$ depending on $|\vec{q}|$ and \eqref{est3.2}.

\begin{lemma}[\bf Hölder regularity in space]\label{Localholderlemaprobtransl}
Assume that $\mathbf{(A1)}$--$\mathbf{(A3)}$ hold. Let $\vec{q}\in\mathbb{R}^{n}$ satisfy $2\leq |\vec{q}|\leq \mathrm{K}_{0}$, for some $\mathrm{K}_{0}=\mathrm{K}_{0}(n,p,\mathfrak{p},\mathfrak{q},\mathfrak{a}_{-})>0$, and let $w$ be a viscosity solution to \eqref{translatedproblem}. There exists $\eta_{0}>0$, depending only on $n$, $p$, $\mathfrak{p}$, $\mathfrak{q}$, and $\mathfrak{a}_{-}$, such that if
\[
\max\{\|w\|_{L^{\infty}(Q_{1})},[\mathfrak{a}]_{C^{0,1}_{x}(Q_{1})},\|\tilde{f}\|_{L^{\infty}(Q_{1})}\}\leq \eta_{0},
\]
then $w$ is locally $\nu$-Hölder continuous in space and
\[
|w(x,t)-w(y,t)|\leq |x-y|^{\nu}
\]
for all $(x,t),(y,t)\in Q_{7/8}$, where
\[
\nu=\min\left\{\frac{1}{2},\frac{4}{15\bar{\mathrm{C}}_{0}}\mathrm{K}_{0}^{\frac{\mathfrak{p}}{1+\mathfrak{p}}}\right\}.
\]
Here, $\bar{\mathrm{C}}_{0}$ is the constant in \eqref{est3.2}.
\end{lemma}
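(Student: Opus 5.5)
We will prove Lemma \ref{Localholderlemaprobtransl} with the Jensen--Ishii (Ishii--Lions) doubling-variables method. Fix $(x_0,t_0)\in Q_{7/8}$. Consider
\[
\Phi(x,y,t):=w(x,t)-w(y,t)-\mathrm{L}\,\varphi(|x-y|)-\frac{\mathrm{M}}{2}|x-x_0|^2-\frac{\mathrm{M}}{2}|y-x_0|^2-\frac{\mathrm{M}}{2}(t-t_0)^2,
\]
with $\varphi(r)=r^{\nu}$ and constants $\mathrm{L},\mathrm{M}$ to be chosen. It suffices to show $\Phi\le 0$ in a neighbourhood of $(x_0,x_0,t_0)$ with $\mathrm{L}=1$. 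Taking $\mathrm{M}$ large in terms of $\|w\|_{L^\infty}\le\eta_0\le1$ confines any positive maximum $(\bar x,\bar y,\bar t)$ to an interior neighbourhood with $\bar x\ne\bar y$. Lemma \ref{JensenIshii} then gives jets $(b_1,\xi_1,X)\in\overline{\mathcal J}^+w(\bar x,\bar t)$ and $(b_2,\xi_2,Y)\in\overline{\mathcal J}^-w(\bar y,\bar t)$ with $b_1-b_2=\mathrm{M}(\bar t-t_0)$. Here $\xi_1=\mathrm{L}\varphi'(\delta)e+\mathrm{M}(\bar x-x_0)$ and $\xi_2=\mathrm{L}\varphi'(\delta)e-\mathrm{M}(\bar y-x_0)$, with $\delta=|\bar x-\bar y|$ and $e=(\bar x-\bar y)/\delta$. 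The jets also satisfy the usual matrix inequality, whose negative eigendirection along $e$ has size $\mathrm{L}\varphi''(\delta)<0$.

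The key use of \eqref{est3.2} is a bound on $\delta$. Since $w$ is spatially Lipschitz with constant $\bar{\mathrm C}_0|\vec q|^{\frac1{1+\mathfrak p}}\le \bar{\mathrm C}_0\mathrm K_0^{\frac1{1+\mathfrak p}}$, positivity of $\Phi$ forces
\[
\mathrm{L}\delta^{\nu}\le \bar{\mathrm C}_0\mathrm K_0^{\frac1{1+\mathfrak p}}\delta,
\]
which gives a lower bound on $\delta$, or equivalently an upper bound $\mathrm L\varphi'(\delta)=\nu\mathrm L\delta^{\nu-1}\le \nu\bar{\mathrm C}_0\mathrm K_0^{\frac1{1+\mathfrak p}}$. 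With $\nu\le\frac{4}{15\bar{\mathrm C}_0}\mathrm K_0^{\frac{\mathfrak p}{1+\mathfrak p}}$ and $\mathrm L=1$, we get $|\xi_i|\lesssim \frac{4}{15}\mathrm K_0^{\frac{\mathfrak p}{1+\mathfrak p}}\mathrm K_0^{\frac1{1+\mathfrak p}}+\mathrm M\cdot(\text{small})=\frac4{15}\mathrm K_0+\text{small}$. Choosing the neighbourhood radius small relative to $\mathrm M$ and using $|\vec q|\ge2$, we can arrange $|\xi_i|\le|\vec q|/2$ (possibly after adjusting constants in $\mathrm K_0$). Consequently $|\xi_i+\vec q|\in[|\vec q|/2,\,2|\vec q|]\subset[1,2\mathrm K_0]$. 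This is the main point: the translated gradient stays uniformly away from $0$ and $\infty$. Hence $\mathcal H(\cdot,\xi_i+\vec q)$ lies between two positive constants depending on $\mathfrak a_\pm,\mathrm K_0,\mathfrak p,\mathfrak q$, and the operator $\Delta_p^{\mathrm N}$ evaluated at direction $\xi_i+\vec q$ is uniformly elliptic with constants $\min\{1,p-1\},\max\{1,p-1\}$. The singular term is thereby neutralised.

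Next we divide each viscosity inequality by $\mathcal H(\bar x,\bar t,\xi_1+\vec q)$, respectively $\mathcal H(\bar y,\bar t,\xi_2+\vec q)$, and subtract. The time terms and $\tilde f$ contribute at most $C(\mathrm M+\eta_0)$ because $\mathcal H$ is bounded below. The difference of the $\mathcal H$ values between the two points (through $[\mathfrak a]_{C^{0,1}_x}\le\eta_0$ and the gradient difference of size $\mathrm M$) multiplies bounded time and source terms and is harmless. For the second-order part we write $A(\eta)=\mathrm{Id}+(p-2)\hat\eta\otimes\hat\eta$ and split $\operatorname{tr}(A(\xi_1+\vec q)X)-\operatorname{tr}(A(\xi_2+\vec q)Y)$ in the standard way. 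The first piece, $\operatorname{tr}(A(\xi_1+\vec q)(X-Y))$, tested along $e$, yields $\lesssim \mathrm L\varphi''(\delta)\,\min\{1,p-1\}<0$ plus $O(\mathrm M)$. The second piece, coming from $A(\xi_1+\vec q)-A(\xi_2+\vec q)$, is bounded by $C\mathrm M\|Y\|/|\vec q|$ using $|\xi_1-\xi_2|\le 2\mathrm M\cdot(\text{radius})$. The bound on $\|Y\|$ is obtained, as usual, from the Jensen--Ishii matrix inequality, namely $\|Y\|\lesssim \mathrm L|\varphi''(\delta)|+\mathrm L\varphi'(\delta)/\delta+\mathrm M$.

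The main obstacle is this final balancing. We need the good term $\mathrm L|\varphi''(\delta)|=\nu(1-\nu)\delta^{\nu-2}$ to dominate all the error terms, which are $O(\mathrm M)$, $O(\eta_0)$, and $O(\mathrm M\delta^{\nu-2}\cdot\text{radius})$. Because $\delta\le$ radius is small, $\delta^{\nu-2}$ is large, so it beats the constants provided $\eta_0$ is small and the localisation radius $r$ is chosen small depending only on the structural data. We then cover $Q_{7/8}$ by such neighbourhoods and use $\|w\|_\infty\le\eta_0$ for pairs of points farther apart than $r$, i.e. $2\eta_0\le r^{\nu}$. This yields $|w(x,t)-w(y,t)|\le|x-y|^\nu$ throughout $Q_{7/8}$. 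We expect the delicate bookkeeping to be making $r$, $\mathrm M$, and $\eta_0$ consistent while keeping $|\xi_i|\le|\vec q|/2$; this is exactly where the specific factor $\frac4{15}$ in $\nu$ enters.
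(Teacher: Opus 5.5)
Your viscosity computation contains two errors, but neither matters, because your own first observation already proves the lemma.

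\textbf{The two errors.} The first is in the gradient bound. Bounding $|\vec q|^{\frac1{1+\mathfrak p}}\le\mathrm K_0^{\frac1{1+\mathfrak p}}$ first gives only $\mathrm L\varphi'(\delta)\le\frac4{15}\mathrm K_0$. That is not below $|\vec q|/2$ when $|\vec q|$ is near $2$ and $\mathrm K_0$ is large, and $\mathrm K_0$ is prescribed, so it cannot be adjusted. The correct step uses $\frac{\mathfrak p}{1+\mathfrak p}<0$ and $|\vec q|\le\mathrm K_0$, which give $\mathrm K_0^{\frac{\mathfrak p}{1+\mathfrak p}}\le|\vec q|^{\frac{\mathfrak p}{1+\mathfrak p}}$ and hence $\nu\bar{\mathrm C}_0|\vec q|^{\frac1{1+\mathfrak p}}\le\frac4{15}|\vec q|$. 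This is \eqref{est3.3}, and it is where the factor $\frac4{15}$ is actually used.

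The second error is the division step. You divide the two viscosity inequalities separately by $\mathcal H_1:=\mathcal H(\bar x,\bar t,\xi_1+\vec q)$ and $\mathcal H_2:=\mathcal H(\bar y,\bar t,\xi_2+\vec q)$, then treat the time terms as $O(\mathrm M)$. But Lemma \ref{JensenIshii} controls only $b_1-b_2$, and
$\frac{b_1}{\mathcal H_1}-\frac{b_2}{\mathcal H_2}=\frac{b_1-b_2}{\mathcal H_1}+b_2\big(\frac1{\mathcal H_1}-\frac1{\mathcal H_2}\big)$.
The equations only give $|b_2|\lesssim\|\mathrm X\|+\|\mathrm Y\|+\mathrm M+\eta_0$, which is of order $\delta^{\nu-2}$. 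So this term has the same order as your good term and must be absorbed using smallness of $|\xi_1-\xi_2|$ and of $[\mathfrak a]_{C^{0,1}_x(Q_1)}\delta$. The paper avoids the problem by subtracting first, so that $\tau$ cancels. It then divides the combined inequality by $|\xi_1|^{\mathfrak p}$ (its $\xi_i$ is your $\xi_i+\vec q$) and treats exactly these contributions as $\mathcal I_4$ and $\mathcal I_5$.

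\textbf{Your observation already finishes the proof.} With $\mathrm L=1$, your inequality $\delta^{\nu}\le\Lambda\delta$, where $\Lambda:=\bar{\mathrm C}_0\mathrm K_0^{\frac1{1+\mathfrak p}}$, says $\delta\ge\Lambda^{-\frac1{1-\nu}}$, a constant depending only on the data. Your localisation gives $\delta\le 2r$. So for $r<\frac12\Lambda^{-\frac1{1-\nu}}$ there is no positive maximum at all, and the ``final balancing'' never occurs. In fact no doubling of variables is needed. By \eqref{est3.2} (applicable since $\eta_0\le 1$ and $|\vec q|\ge 2$) and $\min\{a,b\}\le a^{\nu}b^{1-\nu}$,
\[
|w(x,t)-w(y,t)|\le\min\big\{\Lambda|x-y|,\,2\|w\|_{L^\infty(Q_1)}\big\}\le\Lambda^{\nu}(2\eta_0)^{1-\nu}|x-y|^{\nu}\qquad\text{for }(x,t),(y,t)\in Q_{7/8}.
\]
The constant is at most $1$ once $\eta_0\le\min\{1,\frac12\Lambda^{-\frac{\nu}{1-\nu}}\}$, which depends only on $n,p,\mathfrak p,\mathfrak q,\mathfrak a_-$. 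This proves the statement, and it does so for every $\nu\in(0,1)$. Your patching condition $2\eta_0\le r^{\nu}$ for distant points is exactly the second half of this interpolation when $r=\Lambda^{-\frac1{1-\nu}}$.

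\textbf{Comparison with the paper.} The paper instead runs the full Jensen--Ishii argument on all of $Q_{7/8}$. It uses separate centres $x_0,y_0$, takes $\mathfrak L_1=\mathrm C\|w\|_{L^\infty(Q_1)}$, and has no small localisation radius. It uses \eqref{est3.2} only through \eqref{est3.3}, to keep the translated gradients comparable to $|\vec q|$ as in \eqref{est3.8}. It then fixes $\mathfrak L_2$ explicitly in terms of $\|w\|_{L^\infty(Q_1)}$, $[\mathfrak a]_{C^{0,1}_x(Q_1)}$ and $\|\tilde f\|_{L^\infty(Q_1)}$, so that the negative contribution of $\operatorname{tr}(\mathcal A(\xi_1)(\mathrm X-\mathrm Y))$ absorbs everything else; smallness of the data then gives $\mathfrak L_2\le 1$.

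What that machinery buys is not the Hölder statement itself. It is the template reused, with $|x-y|^{\nu}$ replaced by $\omega$, in Lemma \ref{LocalLiplemaprobtransl}. There the target, a Lipschitz constant equal to $1$, cannot be reached by interpolating between the large constant in \eqref{est3.2} and the small oscillation. For Lemma \ref{Localholderlemaprobtransl} as stated, the two-line argument above is complete and much shorter than either your sketch or the paper's proof.
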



\begin{proof}
We fix $x_{0},y_{0}\in B_{\frac{7}{8}}$ and $t_{0}\in\left(-\left(\frac{7}{8}\right)^{2},0\right)$. For suitable constants $\mathfrak{L}_{1},\mathfrak{L}_{2}>0$, define the auxiliary function
\[
\phi(x,y,t)=w(x,t)-w(y,t)-\mathfrak{L}_{2}|x-y|^{\nu}-\frac{\mathfrak{L}_{1}}{2}\left(|x-x_{0}|^{2}+|y-y_{0}|^{2}+(t-t_{0})^{2}\right).
\]
We claim that $\phi \leq 0$ in $\overline{B_{\frac{7}{8}}}\times \overline{B_{\frac{7}{8}}}\times \left[-\left(\frac{7}{8}\right)^{2},0\right]$. We argue by contradiction.

Assume that $\phi$ attains a positive maximum at some point $(\bar{x},\bar{y},\bar{t})$ in $\overline{B_{\frac{7}{8}}}\times \overline{B_{\frac{7}{8}}}\times \left[-\left(\frac{7}{8}\right)^{2},0\right]$. Since $\phi(\bar{x},\bar{y},\bar{t})>0$, it follows that $\bar{x}\neq \bar{y}$. 

Next, choose
\begin{eqnarray*}
\mathfrak{L}_{1}=\frac{32\|w\|_{L^{\infty}(Q_{1})}}{\left(\min\{\operatorname{dist}((x_{0},t_{0}),\partial Q_{7/8}),\operatorname{dist}((y_{0},t_{0}),\partial Q_{7/8})\}\right)^{2}}=\mathrm{C}\|w\|_{L^{\infty}(Q_{1})}.
\end{eqnarray*}
Then
\begin{eqnarray*}
|\bar{x}-x_{0}|+|\bar{t}-t_{0}|\leq \frac{\operatorname{dist}((x_{0},t_{0}),\partial Q_{7/8})}{2}
\end{eqnarray*}
and
\begin{eqnarray*}
|\bar{y}-y_{0}|+|\bar{t}-t_{0}|\leq \frac{\operatorname{dist}((y_{0},t_{0}),\partial Q_{7/8})}{2}.
\end{eqnarray*}
Consequently, $\bar{x},\bar{y}\in B_{\frac{7}{8}}$ and $\bar{t}\in \left(-\left(\frac{7}{8}\right)^{2},0\right)$.

Moreover, by the Lipschitz estimate in \eqref{est3.2}, we obtain
\begin{equation*}
\mathfrak{L}_{2}|\bar{x}-\bar{y}|^{\nu}\leq\bar{\mathrm{C}}_{0}|\vec{q}|^{\frac{1}{1+\mathfrak{p}}}|\bar{x}-\bar{y}|.
\end{equation*}
Hence, by the definition of $\nu$,
\begin{equation}\label{est3.3}
3\mathfrak{L}_{2}\nu|\bar{x}-\bar{y}|^{\nu-1}\leq \frac{4}{5}|\vec{q}|,
\end{equation}
since $\nu\bar{\mathrm{C}}_{0}\leq \frac{4}{15}\mathrm{K}_{0}^{\frac{\mathfrak{p}}{1+\mathfrak{p}}}$ and $|\vec{q}|\leq \mathrm{K}_{0}$.

With these preparations, we invoke Jensen--Ishii's Lemma \ref{JensenIshii} for the functions
\[
\tilde{u}(x,t)= w(x,t)-\frac{\mathfrak{L}_{1}}{2}|x-x_{0}|^{2}-\frac{\mathfrak{L}_{1}}{2}(t-t_{0})^{2}, 
\quad  
\tilde{v}(y,t)=-w(y,t)-\frac{\mathfrak{L}_{1}}{2}|y-y_{0}|^{2},
\]
and deduce the existence of
\begin{eqnarray*}
\left(\tau+\mathfrak{L}_{1}(\bar{t}-t_{0}),\zeta_{1},\mathrm{X}+\mathfrak{L}_{1}\mathrm{Id}_n\right)\in\overline{\mathcal{J}}^{+}(w)(\bar{x},\bar{t})
\end{eqnarray*}
and
\begin{eqnarray*}
\left(\tau,\zeta_{2},\mathrm{Y}-\mathfrak{L}_{1}\mathrm{Id}_n\right)\in\overline{\mathcal{J}}^{-}(w)(\bar{y},\bar{t}),
\end{eqnarray*}
where
\begin{eqnarray*}
\zeta_{1}=\mathfrak{L}_{2}\nu|\bar{x}-\bar{y}|^{\nu-1}\frac{\bar{x}-\bar{y}}{|\bar{x}-\bar{y}|}+\mathfrak{L}_{1}(\bar{x}-x_{0}),
\end{eqnarray*}
and
\begin{eqnarray*}
\zeta_{2}=\mathfrak{L}_{2}\nu|\bar{x}-\bar{y}|^{\nu-1}\frac{\bar{x}-\bar{y}}{|\bar{x}-\bar{y}|}-\mathfrak{L}_{1}(\bar{y}-y_{0}).
\end{eqnarray*}

Imposing $\mathfrak{L}_{2}>\frac{2\mathfrak{L}_{1}}{\nu}$, we obtain
\begin{eqnarray}
\frac{\mathfrak{L}_{2}}{2}\nu|\bar{x}-\bar{y}|^{\nu-1}\leq |\zeta_{i}|\leq 2\mathfrak{L}_{2}\nu|\bar{x}-\bar{y}|^{\nu-1}, \quad i=1,2. \label{est3.4}
\end{eqnarray}

Furthermore, Jensen--Ishii's Lemma yields that for all $\kappa>0$ such that $\kappa\mathrm{Z}<\mathrm{Id}_{n}$,
\begin{equation}\label{est3.5}
-\frac{2}{\kappa}\begin{pmatrix} \mathrm{Id}_n & 0 \\ 0 & \mathrm{Id}_n \end{pmatrix}
\leq 
\begin{pmatrix} \mathrm{X} & 0 \\ 0 & -\mathrm{Y} \end{pmatrix}
\leq 
\begin{pmatrix} \mathrm{Z}^{\kappa} & -\mathrm{Z} \\ -\mathrm{Z} & \mathrm{Z}^{\kappa} \end{pmatrix},
\end{equation}
where
\begin{align*}
\mathrm{Z} &=\mathfrak{L}_{2} \nu |\bar{x} - \bar{y}|^{\nu - 2} \left( \mathrm{Id}_n + (\nu - 2) \frac{\bar{x} - \bar{y}}{|\bar{x} - \bar{y}|} \otimes \frac{\bar{x} - \bar{y}}{|\bar{x} - \bar{y}|} \right),
\end{align*}
and
\[
\mathrm{Z}^{\kappa}=(\mathrm{Id}_n-\kappa \mathrm{Z})^{-1}\mathrm{Z}.
\]

Choosing $\kappa = \dfrac{1}{2\mathfrak{L}_{2}\nu|\bar{x}-\bar{y}|^{\nu-2}}$ and invoking the Sherman--Morrison formula (see \cite{SherMorr50}), we conclude that
\begin{eqnarray*}
\mathrm{Z}^{\kappa}=2\mathfrak{L}_{2}\nu|\bar{x}-\bar{y}|^{\nu-2}\left(\mathrm{Id}_n-2\frac{2-\nu}{3-\nu}\frac{\bar{x} - \bar{y}}{|\bar{x} - \bar{y}|} \otimes \frac{\bar{x} - \bar{y}}{|\bar{x} - \bar{y}|} \right).
\end{eqnarray*}

In particular, for $\xi=\frac{\bar{x}-\bar{y}}{|\bar{x}-\bar{y}|}$,
\begin{equation}\label{est3.6}
\langle \mathrm{Z}^{\kappa}\xi,\xi\rangle
=2\mathfrak{L}_{2}\nu|\bar{x}-\bar{y}|^{\nu-2}\left(\frac{\nu-1}{3-\nu}\right)<0.
\end{equation}
Applying the matrix inequality \eqref{est3.5} to the pair $(\xi,\xi)$ with $|\xi|=1$, we deduce that $\mathrm{X}-\mathrm{Y}\leq 0$ and
\begin{equation}\label{est3.7}
\|\mathrm{X}\|,\|\mathrm{Y}\|\leq 4\mathfrak{L}_{2}\nu|\bar{x}-\bar{y}|^{\nu-2}.
\end{equation}

For $i=1,2$, set $\xi_{i} := \zeta_{i} + \vec{q}$. Combining \eqref{est3.3} and \eqref{est3.4}, we obtain
\begin{eqnarray}\label{est3.8}
\frac{3}{4}\mathfrak{L}_{2}\nu|\bar{x}-\bar{y}|^{\nu-1}
\leq |\vec{q}|-|\zeta_{i}|
\leq |\xi_{i}|
\leq |\vec{q}|+|\zeta_{i}|
\leq 2|\vec{q}|,
\quad \text{and} \quad
|\xi_{i}|\geq \frac{1}{5}|\vec{q}|.
\end{eqnarray}

For $\eta\in\mathbb{R}^{n}$, define
\begin{eqnarray*}
\mathcal{A}(\eta)=\mathrm{Id}_n+(p-2)\frac{\eta}{|\eta|}\otimes \frac{\eta}{|\eta|}.
\end{eqnarray*}
The eigenvalues of $\mathcal{A}(\eta)$ lie in the interval $[\min\{1,p-1\},\max\{1,p-1\}]$ (see \cite[Lemma 5.1]{AttouRuos20}).

Using the sub- and superdifferential inequalities, we obtain
\[
\tau + \mathfrak{L}_{1}(\bar{t} - t_0) - \mathcal{H}(\bar{x},\bar{t},\xi_{1}) \operatorname{tr}\!\left(\mathcal{A}(\xi_1)(\mathrm{X} + \mathfrak{L}_{1} \mathrm{Id}_n)\right) \leq \tilde{f}(\bar{x},\bar{t}),
\]
and
\[
-\tau + \mathcal{H}(\bar{y},\bar{t},\xi_{2}) \operatorname{tr}\!\left(\mathcal{A}(\xi_2)(\mathrm{Y} - \mathfrak{L}_{1} \mathrm{Id}_n)\right) \leq \tilde{f}(\bar{y},\bar{t}).
\]
Combining these inequalities yields
\begin{eqnarray*}
\mathfrak{L}_{1}(\bar{t} - t_0)
&\leq& \mathcal{H}(\bar{x},\bar{t},\xi_{1}) \operatorname{tr}\!\left(\mathcal{A}(\xi_1)(\mathrm{X} + \mathfrak{L}_{1} \mathrm{Id}_n)\right)\\
&&-\mathcal{H}(\bar{y},\bar{t},\xi_{2}) \operatorname{tr}\!\left(\mathcal{A}(\xi_2)(\mathrm{Y} - \mathfrak{L}_{1} \mathrm{Id}_n)\right)
+2 \|\tilde{f}\|_{L^{\infty}(Q_{1})}.
\end{eqnarray*}

By the definition of $\mathcal{H}$, the bound $|\bar{t}-t_{0}|\leq 2$, and rearranging terms, we infer
\begin{eqnarray}
0&\leq& \frac{2(\mathfrak{L}_{1}+\|\tilde{f}\|_{L^{\infty}(Q_{1})})}{|\xi_{1}|^{\mathfrak{p}}}
+\mathfrak{L}_{1}\left[\operatorname{tr}(\mathcal{A}(\xi_{1}))+\frac{|\xi_{2}|^{\mathfrak{p}}\operatorname{tr}(\mathcal{A}(\xi_{2}))}{|\xi_{1}|^{\mathfrak{p}}}\right]\nonumber\\
&&+\operatorname{tr}(\mathcal{A}(\xi_{1})(\mathrm{X}-\mathrm{Y}))
+\operatorname{tr}((\mathcal{A}(\xi_{1})-\mathcal{A}(\xi_{2}))\mathrm{Y})
+\frac{(|\xi_{1}|^{\mathfrak{p}}-|\xi_{2}|^{\mathfrak{p}})\operatorname{tr}(\mathcal{A}(\xi_{2})\mathrm{Y})}{|\xi_{1}|^{\mathfrak{p}}}
\nonumber\\
&&+(\mathfrak{a}(\bar{x},\bar{t})-\mathfrak{a}(\bar{y},\bar{t}))
\frac{|\xi_{2}|^{\mathfrak{q}}}{|\xi_{1}|^{\mathfrak{p}}}
\left(\operatorname{tr}(\mathcal{A}(\xi_{2})\mathrm{Y})-\mathfrak{L}_{1}\operatorname{tr}(\mathcal{A}(\xi_{2}))\right)\nonumber\\
&&+\mathfrak{a}(\bar{x},\bar{t})\Bigg[
\frac{|\xi_{1}|^{\mathfrak{q}}\left(\operatorname{tr}(\mathcal{A}(\xi_{1})\mathrm{X})-\mathfrak{L}_{1}\operatorname{tr}(\mathcal{A}(\xi_{1}))\right)}{|\xi_{1}|^{\mathfrak{p}}}
\nonumber\\
&&\qquad\qquad
-\frac{|\xi_{2}|^{\mathfrak{q}}\left(\operatorname{tr}(\mathcal{A}(\xi_{2})\mathrm{Y})-\mathfrak{L}_{1}\operatorname{tr}(\mathcal{A}(\xi_{2}))\right)}{|\xi_{1}|^{\mathfrak{p}}}
\Bigg]\nonumber\\
&\eqcolon& \frac{2(\mathfrak{L}_{1}+\|\tilde{f}\|_{L^{\infty}(Q_{1})})}{|\xi_{1}|^{\mathfrak{p}}}
+\mathcal{I}_{1}+\mathcal{I}_{2}+\mathcal{I}_{3}+\mathcal{I}_{4}+\mathcal{I}_{5}+\mathcal{I}_{6}.
\label{est3.9}
\end{eqnarray}

Without loss of generality, we may assume that $|\xi_{1}|\geq |\xi_{2}|$ (the complementary case is analogous). We now estimate each term $\mathcal{I}_{j}$, $j=1,\dots,6$.\\

\medskip

\textbf{Estimate of $\mathcal{I}_{1}$:} Since the eigenvalues of $\mathcal{A}(\xi_{i})$ lie in the interval $[\min\{1, p-1\},\max\{1,p-1\}]$, we obtain
\begin{equation}\label{est3.10}
\mathcal{I}_{1}\leq \mathfrak{L}_{1}n\max\{1,p-1\}\left(1+\frac{|\xi_{2}|^{\mathfrak{p}}}{|\xi_{1}|^{\mathfrak{p}}}\right)\leq 10^{\mathfrak{p}}n\max\{1,p-1\}\mathfrak{L}_{1},
\end{equation}
where, by \eqref{est3.8},
\begin{equation}\label{est3.11}
\frac{1}{10}\leq\frac{|\xi_{2}|}{|\xi_{1}|}\leq 10.
\end{equation}

\medskip

\noindent\textbf{Estimate of $\mathcal{I}_{2}$:} Let $\lambda_{i}(\mathrm{M})$ denote the $i$-th eigenvalue of a matrix $\mathrm{M}$. Applying \eqref{est3.5} to $(\xi,-\xi)$ with $\xi=\frac{\bar{x}-\bar{y}}{|\bar{x}-\bar{y}|}$, we obtain
\[
\langle(\mathrm{X}-\mathrm{Y})\xi,\xi\rangle
\leq 4\langle \mathrm{Z}^{\kappa}\xi,\xi\rangle
\leq 8\mathfrak{L}_{2}\nu|\bar{x}-\bar{y}|^{\nu-2}\left(\frac{\nu-1}{3-\nu}\right)<0.
\]
Hence, there exists $i_{0}\in\{1,\ldots,n\}$ such that
\[
\lambda_{i_{0}}(\mathrm{X}-\mathrm{Y})
\leq 8\mathfrak{L}_{2}\nu|\bar{x}-\bar{y}|^{\nu-2}\left(\frac{\nu-1}{3-\nu}\right).
\]
Consequently,
\begin{eqnarray}
\mathcal{I}_{2}
&\leq& \sum_{i=1}^{n}\lambda_{i}(\mathcal{A}(\xi_{1}))\lambda_{i}(\mathrm{X}-\mathrm{Y})\nonumber\\
&\leq& \lambda_{i_{0}}(\mathcal{A}(\xi_{1}))\lambda_{i_{0}}(\mathrm{X}-\mathrm{Y})\nonumber\\
&\leq& 8\mathfrak{L}_{2}\min\{1,p-1\}\nu|\bar{x}-\bar{y}|^{\nu-2}\left(\frac{\nu-1}{3-\nu}\right),
\label{est3.12}
\end{eqnarray}
since $\mathrm{X}-\mathrm{Y}\leq 0$ and $\lambda_{i}(\mathcal{A}(\xi_{1}))\in [\min\{1,p-1\},\max\{1,p-1\}]$.

\medskip

\noindent\textbf{Estimate of $\mathcal{I}_{3}$:} Let $\hat{\eta}=\frac{\eta}{|\eta|}$ for $\eta\in\mathbb{R}^{n}\setminus\{0\}$. Then
\begin{equation*}
\mathcal{I}_{3}\leq 2n|p-2|\|\mathrm{Y}\||\hat{\xi}_{1}-\hat{\xi}_{2}|
\stackrel{\eqref{est3.7}}{\leq}
8n|p-2|\mathfrak{L}_{2}\nu|\bar{x}-\bar{y}|^{\nu-2}|\hat{\xi}_{1}-\hat{\xi}_{2}|.
\end{equation*}
Using $|\xi_{1}-\xi_{2}|\leq \frac{7}{2}\mathfrak{L}_{1}$ and \eqref{est3.8}, we infer
\[
|\hat{\xi}_{1}-\hat{\xi}_{2}|
\leq \max\left\{\frac{|\xi_{1}-\xi_{2}|}{|\xi_{1}|},\frac{|\xi_{1}-\xi_{2}|}{|\xi_{2}|}\right\}
\leq \frac{14\mathfrak{L}_{1}}{3\mathfrak{L}_{2}\nu|\bar{x}-\bar{y}|^{\nu-1}}.
\]
Consequently,
\begin{equation}\label{est3.13}
\mathcal{I}_{3}\leq 40n|p-2|\mathfrak{L}_{1}|\bar{x}-\bar{y}|^{-1}.
\end{equation}

\medskip

\noindent\textbf{Estimate of $\mathcal{I}_{4}$:} By the mean value theorem,
\begin{equation*}
\frac{\big||\xi_{1}|^{\mathfrak{p}}-|\xi_{2}|^{\mathfrak{p}}\big|}{|\xi_{1}|^{\mathfrak{p}}}
\leq C|\xi_{1}-\xi_{2}|\frac{1}{|\xi_{1}|}
\leq C\mathfrak{L}_{1}\left(\mathfrak{L}_{2}\nu|\bar{x}-\bar{y}|^{\nu-1}\right)^{-1},
\end{equation*}
where we used \eqref{est3.8}. Hence,
\begin{equation}
\mathcal{I}_{4}\leq C(1+|p-2|)\mathfrak{L}_{1}\mathfrak{L}_{2}^{\mathfrak{p}}\nu^{\mathfrak{p}}|\bar{x}-\bar{y}|^{(\nu-1)\mathfrak{p}-1}.
\label{est3.14}
\end{equation}

\medskip

\noindent\textbf{Estimate of $\mathcal{I}_{5}$:} Using the Lipschitz continuity of $\mathfrak{a}$ in space and \eqref{est3.11}, we obtain
\begin{eqnarray}
\mathcal{I}_{5}
&\leq& C|\bar{x}-\bar{y}|\,|\xi_{1}|^{\mathfrak{q}-\mathfrak{p}}
[\mathfrak{a}]_{C^{0,1}_{x}(Q_{1})}
\left(|\operatorname{tr}(\mathcal{A}(\xi_{2})\mathrm{Y})|+\mathfrak{L}_{1}|\operatorname{tr}(\mathcal{A}(\xi_{2}))|\right)\nonumber\\
&\leq& C[\mathfrak{a}]_{C^{0,1}_{x}(Q_{1})}
\big(
\mathfrak{L}_{2}^{1+\mathfrak{p}}\nu^{1+\mathfrak{p}}|\bar{x}-\bar{y}|^{(\nu-1)(1+\mathfrak{p})}
+\mathfrak{L}_{1}\mathfrak{L}_{2}^{\mathfrak{p}}\nu^{\mathfrak{p}}|\bar{x}-\bar{y}|^{(\nu-1)\mathfrak{p}+1}
\big),
\label{est3.15}
\end{eqnarray}
where we used \eqref{est3.7} and \eqref{est3.8}.

\medskip

\noindent\textbf{Estimate of $\mathcal{I}_{6}$:} Arguing as in \eqref{est3.10}--\eqref{est3.14}, it is straightforward to verify that
\begin{eqnarray}
\mathcal{I}_{6}&\leq& \mathfrak{a}(x,t)(2\mathrm{K}_{0})^{\mathfrak{q}-\mathfrak{p}}\Bigg(8\mathfrak{L}_{2}\min\{1,p-1\}\nu|\bar{x}-\bar{y}|^{\nu-2}\left(\frac{\nu-1}{3-\nu}\right)\nonumber\\
&+&10^{-\mathfrak{p}}\!\left(1+\max\{10^{-\mathfrak{q}},10^{\mathfrak{q}}\}(2\mathrm{K}_{0})^{-\mathfrak{p}}\right)n\max\{1,p-1\}\mathfrak{L}_{1}\nonumber\\
&+&40n|p-2|\mathfrak{L}_{1}|\bar{x}-\bar{y}|^{-1}\nonumber\\
&+&4^{2-\mathfrak{p}}(1+|p-2|)\mathfrak{L}_{1}^{-\mathfrak{p}}\mathfrak{L}_{2}^{1+\mathfrak{p}}
\nu^{1+\mathfrak{p}}|\bar{x}-\bar{y}|^{(\nu-1)(1+\mathfrak{p})-1}\Bigg).\label{est3.16}
\end{eqnarray}

Combining \eqref{est3.10}--\eqref{est3.16} with \eqref{est3.9}, we deduce
\begin{eqnarray}
0&\leq&2^{1-\mathfrak{p}}\mathrm{K}_{0}^{-\mathfrak{p}}(\mathfrak{L}_{1}+\|\tilde{f}\|_{L^{\infty}(Q_{1})}) \nonumber\\
&+&\mathfrak{L}_{2}|\bar{x}-\bar{y}|^{\nu-2}\Bigg(-8\min\{1,p-1\}\nu\left(\frac{1-\nu}{3-\nu}\right)\nonumber\\
&+&\frac{10^{-\mathfrak{p}}n\max\{1,p-1\}}{\mathfrak{L}_{2}|\bar{x}-\bar{y}|^{\nu-2}}\mathfrak{L}_{1}
+\frac{40n|p-2|}{\mathfrak{L}_{2}|\bar{x}-\bar{y}|^{\nu-1}}\mathfrak{L}_{1}\nonumber\\
&+&\frac{2^{4-2\mathfrak{p}}(1+|p-2|)\nu^{1+\mathfrak{p}}}{\mathfrak{L}_{2}^{-\mathfrak{p}}|\bar{x}-\bar{y}|^{(1-\nu)\mathfrak{p}}}\mathfrak{L}_{1}^{-\mathfrak{p}}\Bigg)\nonumber\\
&+&\mathfrak{a}(\bar{x},\bar{t})(2\mathrm{K}_{0})^{\mathfrak{q}-\mathfrak{p}}\mathfrak{L}_{2}|\bar{x}-\bar{y}|^{\nu-2}\Bigg(-8\min\{1,p-1\}\nu\left(\frac{1-\nu}{3-\nu}\right)\nonumber\\
&+&\frac{40n|p-2|}{\mathfrak{L}_{2}|\bar{x}-\bar{y}|^{\nu-1}}\mathfrak{L}_{1}\nonumber\\
&+&\frac{10^{-\mathfrak{p}}\!\left(1+\max\{10^{-\mathfrak{q}},10^{\mathfrak{q}}\}(2\mathrm{K}_{0})^{-\mathfrak{p}}\right)n\max\{1,p-1\}}{\mathfrak{L}_{2}|\bar{x}-\bar{y}|^{\nu-2}}\mathfrak{L}_{1}\nonumber\\
&+&\frac{2^{4-2\mathfrak{p}}(1+|p-2|)\nu^{1+\mathfrak{p}}|\bar{x}-\bar{y}|^{(\nu-1)\mathfrak{p}}}{\mathfrak{L}_{2}^{-\mathfrak{p}}}\mathfrak{L}_{1}^{-\mathfrak{p}}\nonumber\\
&+&\frac{4\mathfrak{a}(\bar{x},\bar{t})^{-1}\left(\frac{3}{40}\right)^{\mathfrak{p}}(2\mathrm{K}_{0})^{-\mathfrak{p}}n\max\{1,p-1\}[\mathfrak{a}]_{C^{0,1}_{x}(Q_{1})}\nu^{1+\mathfrak{p}}|\bar{x}-\bar{y}|^{(\nu-1)\mathfrak{p}+1}}{\mathfrak{L}_{2}^{-\mathfrak{p}}}\nonumber\\
&+&\frac{\mathfrak{a}(\bar{x},\bar{t})^{-1}\left(\frac{3}{40}\right)^{\mathfrak{p}}(2\mathrm{K}_{0})^{-\mathfrak{p}}n\max\{1,p-1\}[\mathfrak{a}]_{C^{0,1}_{x}(Q_{1})}\nu^{\mathfrak{p}}|\bar{x}-\bar{y}|^{(\nu-1)(\mathfrak{p}-1)+2}}{\mathfrak{L}_{2}^{1-\mathfrak{p}}}\mathfrak{L}_{1}\Bigg).\label{est3.17}
\end{eqnarray}

We now fix $\mathfrak{L}_{2}$ as
\begin{eqnarray*}
\mathfrak{L}_{2}&=&\Bigg(\frac{3}{\nu}+\frac{2^{1-\nu}40n|p-2|}{\min\{1,p-1\}\nu\left(\frac{1-\nu}{3-\nu}\right)}+\frac{2^{\frac{5-\nu-2\mathfrak{p}}{-\mathfrak{p}}}(1+|p-2|)^{-\frac{1}{\mathfrak{p}}}}{\min\{1,p-1\}^{-\frac{1}{\mathfrak{p}}}\nu\left(\frac{1-\nu}{3-\nu}\right)^{-\frac{1}{\mathfrak{p}}}}\\
&+&\frac{2^{2-\nu}\!\left(10^{-\mathfrak{p}}\!\left(1+\max\{10^{-\mathfrak{q}},10^{\mathfrak{q}}\}(2\mathrm{K}_{0})^{\mathfrak{q}-\mathfrak{p}}\right)n\max\{1,p-1\}+2^{1-\mathfrak{p}}\mathrm{K}_{0}^{-\mathfrak{p}}\right)}{\min\{1,p-1\}\nu\left(\frac{1-\nu}{3-\nu}\right)}\Bigg)\mathfrak{L}_{1}\\
&+&\Bigg(\frac{2^{3+(\nu-1)\mathfrak{p}}(2\mathrm{K}_{0})^{-\mathfrak{p}}(\mathfrak{a}_{-})^{-1}\left(\frac{3}{40}\right)^{\mathfrak{p}}n\max\{1,p-1\}\nu^{\mathfrak{p}}}{\min\{1,p-1\}\left(\frac{1-\nu}{3-\nu}\right)}\Bigg)^{-\frac{1}{\mathfrak{p}}}[\mathfrak{a}]_{C^{0,1}_{x}(Q_{1})}^{-\frac{1}{\mathfrak{p}}}\\
&+&\Bigg(\frac{2^{(\nu-1)(\mathfrak{p}-1)+2}(2\mathrm{K}_{0})^{-\mathfrak{p}}(\mathfrak{a}_{-})^{-1}\left(\frac{3}{40}\right)^{\mathfrak{p}}n\max\{1,p-1\}\nu^{\mathfrak{p}}}{\min\{1,p-1\}\nu\left(\frac{1-\nu}{3-\nu}\right)}\Bigg)^{\frac{1}{1-\mathfrak{p}}}(\mathfrak{L}_{1}[\mathfrak{a}]_{C^{0,1}_{x}(Q_{1})})^{\frac{1}{1-\mathfrak{p}}}\\
&+& \frac{2^{3-\nu-\mathfrak{p}}\mathrm{K}_{0}^{-\mathfrak{p}}}{\min\{1,p-1\}\nu\left(\frac{1-\nu}{3-\nu}\right)}\|\tilde{f}\|_{L^{\infty}(Q_{1})}\\
&\leq&\mathrm{C}\big(\|w\|_{L^{\infty}(Q_{1})}+ (\|w\|_{L^{\infty}(Q_{1})}[\mathfrak{a}]_{C^{0,1}_{x}(Q_{1})})^{\frac{1}{1-\mathfrak{p}}}+ [\mathfrak{a}]_{C^{0,1}_{x}(Q_{1})}^{-\frac{1}{\mathfrak{p}}}+\|\tilde{f}\|_{L^{\infty}(Q_{1})}\big).
\end{eqnarray*}

With this choice and the bounds for $\mathcal{I}_{1}$--$\mathcal{I}_{6}$, \eqref{est3.17} yields
\begin{eqnarray*}
0\leq -3(1+\mathfrak{a}_{-}(2\mathrm{K}_{0})^{\mathfrak{q}-\mathfrak{p}})\min\{1,p-1\}\nu\left(\frac{1-\nu}{3-\nu}\right)\mathfrak{L}_{2}|\bar{x}-\bar{y}|^{\nu-2}<0,
\end{eqnarray*}
a contradiction. Hence, by the definition of $\phi$ and $\phi\leq 0$,
\begin{equation*}
|w(x,t) - w(y,t)| \leq \mathfrak{L}_{2} |\bar{x} - \bar{y}|^{\nu}.
\end{equation*}
Moreover, $\mathfrak{L}_{2}\leq 1$ provided $\eta_{1}>0$ is chosen such that
\[
\|w\|_{L^{\infty}(Q_{1})} \leq \eta_{1}, 
\quad [\mathfrak{a}]_{C^{0,1}_{x}(Q_{1})} \leq \eta_{1}, 
\quad \text{and} \quad 
\|\tilde{f}\|_{L^{\infty}(Q_{1})} \leq \eta_{1}.
\]
\end{proof}

Now, we are in a position to prove the main result in this part.

\begin{proof}[\bf Proof of the Lemma \ref{LocalLiplemaprobtransl}]
We fix $x_{0},y_{0}\in B_{\frac{3}{4}}$ and $t_{0}\in \left(-\left(\frac{3}{4}\right)^{2},0\right)$. Define, for positive constants $\mathfrak{L}_{1}$ and $\mathfrak{L}_{2}$,
\begin{eqnarray*}
\phi(x,y,t)\coloneqq w(x,t)-w(y,t)-\mathfrak{L}_{2}\omega(|x-y|)-\frac{\mathfrak{L}_{1}}{2}|x-x_{0}|^2-\frac{\mathfrak{L}_{1}}{2}|y-y_{0}|^2-\frac{\mathfrak{L}_{1}}{2}(t-t_{0})^2.
\end{eqnarray*}
Here $\omega:[0,\infty)\to[0,\infty)$ is given by
\begin{eqnarray*}
\omega(s)=\begin{cases}
s-k_{0}s^{\gamma},& 0\leq s\leq s_{1}=\left(\frac{1}{\gamma k_{0}}\right)^{\frac{1}{\gamma-1}},\\
\omega(s_{1}),& \text{otherwise},
\end{cases}
\end{eqnarray*}
where
\[
\gamma=1-\frac{\mathfrak{p}\nu}{2}\in(1,2),
\]
with $\nu$ the H\"older exponent from Lemma~\ref{Localholderlemaprobtransl}, and $k_{0}>0$ is chosen so that $s_{1}>2$ and $\gamma k_{0}s_{1}^{\gamma-1}\leq \frac{1}{4}$. Then
\[
\omega'(s)=1-\gamma k_{0}s^{\gamma-1}, 
\qquad 
\omega''(s)=-\gamma(\gamma-1)k_{0}s^{\gamma-2}.
\]
By construction, $\omega'(s)\in[3/4,1]$ and $\omega''(s)<0$ for $s\in(0,2]$.

We claim that $\phi\leq 0$ in $\overline{B_{\frac{3}{4}}}\times \overline{B_{\frac{3}{4}}}\times\left[-\left(\frac{3}{4}\right)^{2},0\right]$. Arguing by contradiction, suppose $\phi(\bar{x},\bar{y},\bar{t})>0$ at a maximum point. Then $\bar{x}\neq \bar{y}$ and
\[
(\bar{x},\bar{y},\bar{t})\in B_{\frac{3}{4}}\times B_{\frac{3}{4}}\times\left(-\left(\frac{3}{4}\right)^{2},0\right),
\]
with
\[
\max\{|\bar{x}-x_{0}|,|\bar{y}-y_{0}|,|\bar{t}-t_{0}|\}\leq \frac{1}{16},
\]
upon choosing $\mathfrak{L}_{1}=4^{5}\|w\|_{L^{\infty}(Q_{1})}$. Using the $\nu$-H\"older continuity of $w$ and $\|w\|_{L^{\infty}(Q_{1})}\leq \eta_{0}\leq \eta_{1}$, we infer
\begin{equation*}
\max\{\mathfrak{L}_{1}|\bar{x}-x_{0}|^{2},\,\mathfrak{L}_{1}|\bar{y}-y_{0}|^{2}\}\leq 2|\bar{x}-\bar{y}|^{\nu}.
\end{equation*}

Applying the Jensen--Ishii lemma (Lemma~\ref{JensenIshii}), there exist
\[
(\tau+\mathfrak{L}_{1}(\bar{t}-t_{0}),\zeta_{1},\mathrm{X}+\mathfrak{L}_{1}\mathrm{Id}_{n})\in \overline{\mathcal{J}}^{+}(w)(\bar{x},\bar{t}),
\]
and
\[
(\tau,\zeta_{2},\mathrm{Y}-\mathfrak{L}_{1}\mathrm{Id}_{n})\in \overline{\mathcal{J}}^{-}(w)(\bar{y},\bar{t}),
\]
where
\[
\zeta_{1}=\mathfrak{L}_{2}\omega'(|\bar{x}-\bar{y}|)\frac{\bar{x}-\bar{y}}{|\bar{x}-\bar{y}|}+\mathfrak{L}_{1}(\bar{x}-x_{0}), 
\quad 
\zeta_{2}=\mathfrak{L}_{2}\omega'(|\bar{x}-\bar{y}|)\frac{\bar{x}-\bar{y}}{|\bar{x}-\bar{y}|}-\mathfrak{L}_{1}(\bar{y}-y_{0}).
\]
Assuming $\mathfrak{L}_{2}\geq \mathfrak{L}_{1}$ and using $\omega'\geq \frac{3}{4}$, we obtain
\begin{equation}\label{est3.18}
\frac{\mathfrak{L}_{2}}{4}\leq |\zeta_{i}|\leq 2\mathfrak{L}_{2}, \quad i=1,2.
\end{equation}

Moreover, the matrices $\mathrm{X}\leq \mathrm{Y}$ satisfy, for any $\kappa>0$,
\begin{equation*}
-[\kappa+2\|\mathrm{Z}\|]
\begin{pmatrix} \mathrm{Id}_n & 0 \\ 0 & \mathrm{Id}_n \end{pmatrix}
\leq 
\begin{pmatrix} \mathrm{X} & 0 \\ 0 & -\mathrm{Y} \end{pmatrix}
\leq 
\begin{pmatrix} \mathrm{Z} & -\mathrm{Z} \\ -\mathrm{Z} & \mathrm{Z} \end{pmatrix}
+\frac{2}{\kappa}
\begin{pmatrix} \mathrm{Z}^2 & -\mathrm{Z}^2 \\ -\mathrm{Z}^2 & \mathrm{Z}^2 \end{pmatrix},
\end{equation*}
where
\begin{align*}
\mathrm{Z} &= \mathfrak{L}_{2} \omega''(|\bar{x}-\bar{y}|)\,\xi\otimes\xi
+ \mathfrak{L}_{2} \omega'(|\bar{x}-\bar{y}|)\big(\mathrm{Id}_n-\xi\otimes\xi\big),\\
\mathrm{Z}^2 &=\left[ \mathfrak{L}_{2}^{2}\frac{(\omega'(|\bar{x}-\bar{y}|))^{2}}{|\bar{x}-\bar{y}|^{2}}+\mathfrak{L}_{2}^{2}(\omega''(|\bar{x}-\bar{y}|))^{2}\right]\big(\mathrm{Id}_n-\xi\otimes\xi\big),
\end{align*}
with $\xi=\frac{\bar{x}-\bar{y}}{|\bar{x}-\bar{y}|}$. Choosing
\[
\kappa=4\mathfrak{L}_{2}\left(|\omega''(|\bar{x}-\bar{y}|)|+\frac{|\omega'(|\bar{x}-\bar{y}|)|}{|\bar{x}-\bar{y}|}\right),
\]
we deduce
\[
\langle(\mathrm{X}-\mathrm{Y})\xi,\xi\rangle
\leq 2\mathfrak{L}_{2}\omega''(|\bar{x}-\bar{y}|)<0,
\]
and $\max\{\|\mathrm{X}\|,\|\mathrm{Y}\|\}\leq \kappa+2\|\mathrm{Z}\|$.

Finally, setting $\xi_{i}=\zeta_{i}+\vec{q}$, $i=1,2$, we obtain
\begin{equation}\label{est3.19}
\frac{\mathfrak{L}_{2}}{4}\leq |\xi_{i}|\leq 2|\vec{q}|, 
\qquad 
|\xi_{i}|\geq \frac{15}{16\mathrm{K}_{0}}|\vec{q}|,\quad i=1,2,
\end{equation}
where we used $\mathfrak{L}_{2}\leq 1$ (valid when $\max\{\|w\|_{L^{\infty}(Q_{1})},[\mathfrak{a}]_{C^{0,1}_{x}(Q_{1})},\|\tilde{f}\|_{L^{\infty}(Q_{1})}\}\leq \eta_{1}$) together with \eqref{est3.18}.

Now, invoking the viscosity inequalities, we obtain
\begin{eqnarray}
0&\leq& \frac{2(\mathfrak{L}_{1}+\|\tilde{f}\|_{L^{\infty}(Q_{1})})}{|\xi_{1}|^{\mathfrak{p}}}+\mathfrak{L}_{1}\left[\operatorname{tr}(\mathcal{A}(\xi_{1}))+\frac{|\xi_{2}|^{\mathfrak{p}}\operatorname{tr}(\mathcal{A}(\xi_{2}))}{|\xi_{1}|^{\mathfrak{p}}}\right]\nonumber\\
&+&\operatorname{tr}(\mathcal{A}(\xi_{1})(\mathrm{X}-\mathrm{Y}))
+\operatorname{tr}((\mathcal{A}(\xi_{1})-\mathcal{A}(\xi_{2}))\mathrm{Y})+\frac{(|\xi_{1}|^{\mathfrak{p}}-|\xi_{2}|^{\mathfrak{p}})\operatorname{tr}(\mathcal{A}(\xi_{2})\mathrm{Y})}{|\xi_{1}|^{\mathfrak{p}}}
\nonumber\\
&+&(\mathfrak{a}(\bar{x},\bar{t})-\mathfrak{a}(\bar{y},\bar{t}))\frac{|\xi_{2}|^{\mathfrak{q}}}{|\xi_{1}|^{\mathfrak{p}}}(\operatorname{tr}(\mathcal{A}(\xi_{2})\mathrm{Y})-\mathfrak{L}_{1}\operatorname{tr}(\mathcal{A}(\xi_{2})))\nonumber\\
&+&\mathfrak{a}(\bar{x},\bar{t})\left[\frac{|\xi_{1}|^{\mathfrak{q}}(\operatorname{tr}(\mathcal{A}(\xi_{1})\mathrm{X})-\mathfrak{L}_{1}\operatorname{tr}\mathcal{A}(\xi_{1})))}{|\xi_{1}|^{\mathfrak{p}}}-\frac{|\xi_{2}|^{\mathfrak{q}}(\operatorname{tr}(\mathcal{A}(\xi_{2})\mathrm{Y})-\mathfrak{L}_{1}\operatorname{tr}(\mathcal{A}(\xi_{2})))}{|\xi_{1}|^{\mathfrak{p}}}\right]\nonumber\\
&=:& \frac{2(\mathfrak{L}_{1}+\|\tilde{f}\|_{L^{\infty}(Q_{1})})}{|\xi_{1}|^{\mathfrak{p}}}+\mathcal{I}_{1}+\mathcal{I}_{2}+\mathcal{I}_{3}+\mathcal{I}_{4}+\mathcal{I}_{5}+\mathcal{I}_{6}.\label{est3.20}
\end{eqnarray}

Arguing as in Lemma \ref{Localholderlemaprobtransl}, we have:
\begin{itemize}
\item $\operatorname{tr}(\mathcal{A}(\xi_{i}))\in (n\min\{1,p-1\},n\max\{1,p-1\})$, $i=1,2$;
\item $\operatorname{tr}(\mathcal{A}(\xi_{i})(\mathrm{X}-\mathrm{Y}))\leq 2\mathfrak{L}_{2}\min\{1,p-1\}\omega''(|\bar{x}-\bar{y}|)$, $i=1,2$;
\item $\max\{\|\mathrm{X}\|,\|\mathrm{Y}\|\}\leq 4\mathfrak{L}_{2}\left(|\omega''(|\bar{x}-\bar{y}|)|+\frac{\omega'(|\bar{x}-\bar{y}|)}{|\bar{x}-\bar{y}|}\right)$;
\item $\frac{1}{3\mathrm{K}_{0}}\leq \frac{|\xi_{2}|}{|\xi_{1}|}\leq 3\mathrm{K}_{0}$ (by \eqref{est3.19}).
\end{itemize}

Consequently, we estimate
\begin{eqnarray*}
\mathcal{I}_{1}&\leq& 3\mathrm{K}_{0}n\max\{1,p-1\}\mathfrak{L}_{1},\\
\mathcal{I}_{2}&\leq& -2\mathfrak{L}_{2}\min\{1,p-1\}\gamma(\gamma-1)k_{0}|\bar{x}-\bar{y}|^{\gamma-2},\\
\mathcal{I}_{3}&\leq& 128n|p-2|\mathfrak{L}_{1}^{\frac{1}{2}}|\bar{x}-\bar{y}|^{\frac{\nu}{2}}(|\bar{x}-\bar{y}|^{-1}+\gamma(\gamma-1)k_{0}|\bar{x}-\bar{y}|^{\gamma-2}),\\
\mathcal{I}_{4}&\leq& 16n(1+|p-2|)\mathfrak{L}_{1}^{-\frac{\mathfrak{p}}{2}}\mathfrak{L}_{2}^{1+\mathfrak{p}}(|\bar{x}-\bar{y}|^{-1}+\gamma(\gamma-1)k_{0}|\bar{x}-\bar{y}|^{\gamma-2})|\bar{x}-\bar{y}|^{-\frac{\mathfrak{p}\nu}{2}},\\
\mathcal{I}_{5}&\leq&(3\mathrm{K}_{0})^{\mathfrak{q}-3\mathfrak{p}}n\max\{1,p-1\}[\mathfrak{a}]_{C^{0,1}_{x}(Q_{1})}(4^{1-\mathfrak{p}}\mathfrak{L}_{2}^{1+\mathfrak{p}}(1+\gamma(\gamma-1)k_{0}|\bar{x}-\bar{y}|^{\gamma-1})\\
&+&4^{-\mathfrak{p}}\mathfrak{L}_{1}\mathfrak{L}_{2}^{\mathfrak{p}}),\\
\mathcal{I}_{6}&\leq&\mathfrak{a}(\bar{x},\bar{t})(2\mathrm{K}_{0})^{\mathfrak{q}-\mathfrak{p}}\Big(2\mathfrak{L}_{2}\min\{1,p-1\}\gamma(\gamma-1)k_{0}|\bar{x}-\bar{y}|^{\gamma-2}\\
&+&4^{-\mathfrak{p}}(1+\max\{(3\mathrm{K}_{0})^{-\mathfrak{q}},(3\mathrm{K}_{0})^{\mathfrak{q}}\}(2\mathrm{K}_{0})^{-\mathfrak{p}})n\max\{1,p-1\}\mathfrak{L}_{2}^{\mathfrak{p}}\mathfrak{L}_{1}\\
&+& 128n|p-2|\mathfrak{L}_{1}^{\frac{1}{2}}|\bar{x}-\bar{y}|^{\frac{\nu}{2}}(|\bar{x}-\bar{y}|^{-1}+\gamma(\gamma-1)k_{0}|\bar{x}-\bar{y}|^{\gamma-2})\\
&+&\max\{2^{2+\mathfrak{q}},2^{2-2\mathfrak{q}}\}n(1+|p-2|)\mathfrak{L}_{1}^{-\frac{\mathfrak{p}}{2}}\mathfrak{L}_{2}^{1+\mathfrak{p}}(|\bar{x}-\bar{y}|^{-1}\\
&+&\gamma(\gamma-1)k_{0}|\bar{x}-\bar{y}|^{\gamma-2})|\bar{x}-\bar{y}|^{-\frac{\mathfrak{p}\nu}{2}}\Big).
\end{eqnarray*}

Substituting into \eqref{est3.20} yields
\begin{eqnarray} 0&\leq& 2^{1-\mathfrak{p}}\mathrm{K}_{0}^{-\mathfrak{p}}(\mathfrak{L}_{1}+\|\tilde{f}\|_{L^{\infty}(Q_{1})})+\mathfrak{L}_{2}|\bar{x}-\bar{y}|^{\gamma-2}\Bigg(-2\min\{1,p-1\}\gamma(\gamma-1)k_{0}\nonumber\\ &+&\frac{3\mathrm{K}_{0}n\max\{1,p-1\}}{\mathfrak{L}_{2}|\bar{x}-\bar{y}|^{\gamma-2}}\mathfrak{L}_{1}+\frac{128n|p-2||\bar{x}-\bar{y}|^{1-\gamma+\frac{\nu}{2}}(1+\gamma(\gamma-1)k_{0}|\bar{x}-\bar{y}|^{\gamma-1})}{\mathfrak{L}_{2}}\mathfrak{L}_{1}^{\frac{1}{2}}\nonumber\\ &+&\frac{16n(1+|p-2|)(1+\gamma(\gamma-1)k_{0}|\bar{x}-\bar{y}|^{\gamma-1})}{\mathfrak{L}_{2}^{-\mathfrak{p}}}\mathfrak{L}_{1}^{-\frac{\mathfrak{p}}{2}}\Bigg)\nonumber\\ &+&\mathfrak{a}(\bar{x},\bar{t})(2\mathrm{K}_{0})^{\mathfrak{q}-\mathfrak{p}}\mathfrak{L}_{2}|\bar{x}-\bar{y}|^{\gamma-2}\Bigg(-2\min\{1,p-1\}\gamma(\gamma-1)k_{0}\nonumber\\ &+&\frac{4^{-\mathfrak{p}}(1+\max\{(3\mathrm{K}_{0})^{-\mathfrak{q}},(3\mathrm{K}_{0})^{\mathfrak{q}}\}(2\mathrm{K}_{0})^{-\mathfrak{p}})n\max\{1,p-1\}}{\mathfrak{L}_{2}^{1-\mathfrak{p}}|\bar{x}-\bar{y}|^{\gamma-2}}\mathfrak{L}_{1}\nonumber\\ &+&\frac{128n|p-2||\bar{x}-\bar{y}|^{1-\gamma-\frac{\nu}{2}}(1+\gamma(\gamma-1)k_{0}|\bar{x}-\bar{y}|^{\gamma-1})}{\mathfrak{L}_{2}}\mathfrak{L}_{1}^{\frac{1}{2}}\nonumber\\ &+&\frac{\max\{2^{2+\mathfrak{q}},2^{2-2\mathfrak{q}}\}n(1+|p-2|)(1+\gamma(\gamma-1)k_{0}|\bar{x}-\bar{y}|^{\gamma-1})}{\mathfrak{L}_{2}^{-\mathfrak{p}}}\mathfrak{L}_{1}^{-\frac{\mathfrak{p}}{2}}\nonumber\\ &+&\frac{2^{2-\mathfrak{p}+\mathfrak{q}}3^{\mathfrak{q}-3\mathfrak{p}}\mathfrak{a}(\bar{x},\bar{t})^{-1}\mathrm{K}_{0}^{-2\mathfrak{p}}[\mathfrak{a}]_{C^{0,1}_{x}(Q_{1})}n\max\{1,p-1\}(1+\gamma(\gamma-1)k_{0}|\bar{x}-\bar{y}|^{\gamma-1})}{\mathfrak{L}_{2}^{-\mathfrak{p}}|\bar{x}-\bar{y}|^{\gamma-2}}\nonumber\\ &+&\frac{2^{-2\mathfrak{p}}\mathfrak{a}(\bar{x},\bar{t})^{-1}(3\mathrm{K}_{0})^{\mathfrak{q}-3\mathfrak{p}}(2\mathrm{K}_{0})^{\mathfrak{p}-\mathfrak{q}}[\mathfrak{a}]_{C^{0,1}_{x}(Q_{1})}n\max\{1,p-1\}}{\mathfrak{L}_{2}^{1-\mathfrak{p}}|\bar{x}-\bar{y}|^{\gamma-2}}\mathfrak{L}_{1}\Bigg).\label{est3.21} \end{eqnarray}

We now fix $\mathfrak{L}_{2}$ as follows:
\begin{eqnarray*} \mathfrak{L}_{2}&=&\Bigg(1+\left(\frac{2^{2-\gamma-2\mathfrak{p}}5(1+\max\{(3\mathrm{K}_{0})^{-\mathfrak{q}},(3\mathrm{K}_{0})^{\mathfrak{q}}\}(2\mathrm{K}_{0})^{-\mathfrak{p}})n\max\{1,p-1\}}{\min\{1,p-1\}\gamma(\gamma-1)k_{0}}\right)^{\frac{1}{1-\mathfrak{p}}}\\ &+&\frac{2^{2-\gamma}(15\mathrm{K}_{0}n\max\{1,p-1\}+2^{1-\mathfrak{p}}5\mathrm{K}_{0}^{-\mathfrak{p}})}{\min\{1,p-1\}\gamma(\gamma-1)k_{0}}\Bigg)\mathfrak{L}_{1}+\frac{2^{3-\gamma-\mathfrak{p}}\mathrm{K}_{0}^{-\mathfrak{p}}}{\min\{1,p-1\}\gamma(\gamma-1)k_{0}}\|\tilde{f}\|_{L^{\infty}(Q_{1})}\nonumber\\ &+&\Bigg(\left(\frac{20(4+\max\{2^{\mathfrak{q}},2^{-2\mathfrak{q}}\})n(1+|p-2|)(1+2^{\gamma-1}\gamma(\gamma-1)k_{0})}{\min\{1,p-1\}\gamma(\gamma-1)k_{0}}\right)^{-\frac{1}{\mathfrak{p}}}\nonumber\\ &+&\frac{640n|p-2|(1+2^{\gamma-1}\gamma(\gamma-1)k_{0})}{\min\{1,p-1\}\gamma(\gamma-1)k_{0}}\Bigg)\mathfrak{L}_{1}^{\frac{1}{2}}\\ &+&\Bigg(\frac{2^{4-\gamma-\mathfrak{p}+\mathfrak{q}}3^{\mathfrak{q}-3\mathfrak{p}}5\mathrm{K}_{0}^{-2\mathfrak{p}}(\mathfrak{a}_{-})^{-1}n\max\{1,p-1\}(1+2^{\gamma-1}\gamma(\gamma-1)k_{0})}{\min\{1,p-1\}\gamma(\gamma-1)k_{0}}\Bigg)^{-\frac{1}{\mathfrak{p}}}[\mathfrak{a}]_{C^{0,1}_{x}(Q_{1})}^{-\frac{1}{\mathfrak{p}}}\\ &+&\Bigg(\frac{2^{2-\gamma-\mathfrak{p}}3^{\mathfrak{q}-3\mathfrak{p}}5\mathrm{K}_{0}^{-2\mathfrak{p}}(\mathfrak{a}_{-})^{-1}n\max\{1,p-1\}}{\min\{1,p-1\}\gamma(\gamma-1)k_{0}}\Bigg)^{\frac{1}{1-\mathfrak{p}}}(\mathfrak{L}_{1}[\mathfrak{a}]_{C^{0,1}_{x}(Q_{1})})^{\frac{1}{1-\mathfrak{p}}}\\ &\leq&\mathrm{C}(\|w\|_{L^{\infty}(Q_{1})}+\|w\|_{L^{\infty}(Q_{1})}^{\frac{1}{2}}+ (\|w\|_{L^{\infty}(Q_{1})}[\mathfrak{a}]_{C^{0,1}_{x}(Q_{1})})^{\frac{1}{1-\mathfrak{p}}}+ [\mathfrak{a}]_{C^{0,1}_{x}(Q_{1})}^{-\frac{1}{\mathfrak{p}}}+\|\tilde{f}\|_{L^{\infty}(Q_{1})}), \end{eqnarray*}

With this choice and the bounds on $\mathcal{I}_{1}$--$\mathcal{I}_{6}$, \eqref{est3.21} yields
\begin{equation}
0\leq-(1+\mathfrak{a}_{-}(2\mathrm{K}_{0})^{\mathfrak{q}-\mathfrak{p}})\min\{1,p-1\}\gamma(\gamma-1)k_{0}\mathfrak{L}_{2}|\bar{x}-\bar{y}|^{\gamma-2}<0,
\end{equation}
a contradiction.

As in the H\"{o}lder estimates, there exists $\eta_{1}>0$ such that if
\[
\|w\|_{L^{\infty}(Q_{1})}\leq \eta_{1},\quad [\mathfrak{a}]_{C^{0,1}_{x}(Q_{1})}\leq \eta_{1}, \quad \|\tilde{f}\|_{L^{\infty}(Q_{1})}\leq \eta_{1},
\]
then $\mathfrak{L}_{2}\leq 1$, and consequently
\begin{eqnarray*}
|w(x,t)-w(y,t)|\leq |x-y|,\quad \forall (x,t),(y,t)\in Q_{\frac{3}{4}}.
\end{eqnarray*}
\end{proof}

\section{
Proof of Theorem \ref{Thm01}}\label{Sec4}

Based on the compactness established in the previous section, we proceed with the second and third steps in the proof of Theorem \ref{Thm01}. We first state an approximation lemma, whose proof follows that of \cite[Lemma 5.1]{BessaDaSilvaSa25} and is therefore omitted.

\begin{lemma}[\bf Approximation Scheme]\label{Approximationlemma}
Assume that the structural conditions $\mathbf{(A1)}-\mathbf{(A3)}$ hold. Let \(u\) be a viscosity solution to \eqref{Problem} such that \(\displaystyle\operatornamewithlimits{osc}_{Q_1} u \leq 5\) and $u(0,0)=0$. Given \(\delta > 0\), there exists \(\eta \in (0,1)\), depending only on \(p\), \(n\), \(\mathfrak{p}\), \(\mathfrak{q}\), \(\mathfrak{a}_{-}\), and \(\delta\), such that if
\[
\max\left\{[\mathfrak{a}]_{C^{0,1}_{x}(Q_{1})},\|f\|_{L^{\infty}(Q_{1})}\right\}\leq \eta,
\]
then there exists a solution \(\mathfrak{h}\) to
\begin{eqnarray}\label{probhom}
\partial_{t}\mathfrak{h}-(|D\mathfrak{h}|^{\mathfrak{p}}+\mathfrak{a}(0,t)|D\mathfrak{h}|^{\mathfrak{q}})\Delta_{p}^{\mathrm{N}}\mathfrak{h}=0\,\,\, \text{in}\,\,\, Q_{7/8}
\end{eqnarray}
satisfying $\mathfrak{h}(0,0)=0$ and
\[
\|u-\mathfrak{h}\|_{L^{\infty}(Q_{7/8})}\leq\delta.
\]
\end{lemma}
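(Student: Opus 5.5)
We argue by contradiction and compactness, following the scheme of \cite[Lemma 5.1]{BessaDaSilvaSa25}. Suppose that for some $\delta_0>0$ the statement fails. Then there are sequences $\mathfrak{a}_k$ (satisfying $\mathbf{(A2)}$ with the same $\mathfrak{a}_-$, $\mathfrak{a}_+$, $\mathfrak{A}_0$), $f_k$ and viscosity solutions $u_k$ of \eqref{Problem} with data $(\mathfrak{a}_k,f_k)$. These satisfy $\operatorname{osc}_{Q_1}u_k\le 5$, $u_k(0,0)=0$, and
\[
\max\{[\mathfrak{a}_k]_{C^{0,1}_x(Q_1)},\|f_k\|_{L^\infty(Q_1)}\}\le 1/k,
\]
but $\|u_k-\mathfrak{h}\|_{L^\infty(Q_{7/8})}>\delta_0$ for every solution $\mathfrak{h}$ of \eqref{probhom} (with $\mathfrak{a}_k(0,t)$) vanishing at the origin. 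Since $u_k(0,0)=0$ and the oscillation is at most $5$, we get $\|u_k\|_{L^\infty(Q_1)}\le 5$.

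The first step is compactness. Lemma \ref{LocalLiplemaprob}(a) gives a uniform spatial Lipschitz bound on $Q_{7/8}$, and part (b) gives a uniform time H\"older bound with exponent $\frac{1+\mathfrak{p}}{2+\mathfrak{p}+\mathfrak{q}}$ on $Q_{3/4}$. To reach $Q_{7/8}$ itself, I apply these estimates on slightly larger rescaled cylinders centred inside $Q_1$ and cover $\overline{Q_{7/8}}$. The constants are uniform in $k$ because $[\mathfrak{a}_k]$ and $\|f_k\|$ are at most $1$. Arzel\`a--Ascoli then yields a subsequence $u_k\to u_\infty$ uniformly on $\overline{Q_{7/8}}$, with $u_\infty(0,0)=0$. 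On the coefficients, $\mathfrak{a}_k$ is equi-Lipschitz in $(x,t)$ with bounds $\mathfrak{a}_-\le\mathfrak{a}_k\le\mathfrak{a}_+$, so a further subsequence gives $\mathfrak{a}_k\to\mathfrak{a}_\infty$ uniformly. Since $[\mathfrak{a}_k]_{C^{0,1}_x}\to 0$, the limit $\mathfrak{a}_\infty(x,t)=\mathfrak{a}_\infty(0,t)$ is independent of $x$. In particular $\sup_{Q_1}|\mathfrak{a}_k(x,t)-\mathfrak{a}_k(0,t)|\le 1/k$.

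The second step, which is the main obstacle, is stability of viscosity solutions under uniform convergence in the singular regime $\mathfrak{p}<0$. I will show that $u_\infty$ solves $\partial_t\mathfrak{h}-(|D\mathfrak{h}|^{\mathfrak{p}}+\mathfrak{a}_\infty(0,t)|D\mathfrak{h}|^{\mathfrak{q}})\Delta_p^{\mathrm N}\mathfrak{h}=0$ in $Q_{7/8}$, in the sense of the Definition. Take a test function $\phi$ touching $u_\infty$ strictly from above at $(x_0,t_0)$. Standard perturbation gives points $(x_k,t_k)\to(x_0,t_0)$ where $\phi$, shifted by a constant, touches $u_k$ from above.

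\textbf{Case $D\phi(x_0,t_0)\neq0$.} Then $D\phi(x_k,t_k)\neq0$ for large $k$. We pass to the limit in
\[
\partial_t\phi-\mathcal{H}_k\,\Delta_p^{\mathrm N}\phi\le f_k(x_k,t_k),
\]
using continuity of $\mathcal{H}$ away from $\xi=0$, uniform convergence of $\mathfrak{a}_k$, and $\|f_k\|_\infty\to0$.

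\textbf{Case $D\phi(x_0,t_0)=0$.} Here the term $|\xi|^{\mathfrak{p}}$ blows up, so I follow the Ohnuma--Sato / Juutinen--Kawohl style argument used for singular equations, as in \cite{Attouchi20,AttouRuos20,FZ23}. The aim is to verify alternative (ii) of the definition. If $D\phi(x_k,t_k)\ne0$ along a subsequence, I would instead use test functions of the form
\[
\phi(x,t)+\varepsilon|x-x_0|^4 \quad\text{or}\quad \varphi(t)+\tfrac{1}{\varepsilon}|x-y|^{4}
\]
with doubled variables. For these, $|\xi|^{\mathfrak{p}}\,|D^2|$ stays controlled, since $\mathfrak{p}>-1$ and the Hessian scales like $|\xi|^{2/3}$. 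This allows passing $\partial_t\phi(x_k,t_k)\le f_k+o(1)$ to the limit, giving $\phi_t(x_0,t_0)\le0$. If instead the $u_k$ satisfy alternative (ii) at the touching points, the flatness condition is stable under uniform limits after the usual modification. If this direct route becomes too technical, I would invoke the stability result for this class of equations from \cite[Lemma 5.1]{BessaDaSilvaSa25}/\cite{FZ23}, adapted to $\mathfrak{p}\in(-1,0)$. The supersolution property is symmetric.

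Finally, I set $\mathfrak{h}:=u_\infty$, which solves an equation of the form \eqref{probhom} with coefficient $\mathfrak{a}_\infty(0,t)$. To match the statement, which uses $\mathfrak{a}_k(0,t)$ itself, I compare the equations with coefficients $\mathfrak{a}_\infty(0,t)$ and $\mathfrak{a}_k(0,t)$. One option is to argue the contradiction directly with a solution $\mathfrak{h}_k$ having coefficient $\mathfrak{a}_k(0,t)$ and the same boundary data as $u_\infty$. By comparison and stability, which are available since the frozen equation is $x$-independent, $\mathfrak{h}_k\to u_\infty$ uniformly. Alternatively, one can read the lemma as giving some $x$-independent coefficient. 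Either way, for large $k$,
\[
\|u_k-\mathfrak{h}\|_{L^\infty(Q_{7/8})}<\delta_0 ,
\]
with $\mathfrak{h}(0,0)=0$ after subtracting the vanishing constant $\mathfrak{h}_k(0,0)\to0$, which is allowed since constants solve the equation. This contradicts the choice of $u_k$.
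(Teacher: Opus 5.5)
Your architecture (contradiction, compactness from Lemma~\ref{LocalLiplemaprob} via Lipschitz rescaling and a covering of $\overline{Q_{7/8}}$, uniform convergence of $\mathfrak{a}_k$ to an $x$-independent limit, then stability) is the compactness argument the paper omits and defers to \cite[Lemma 5.1]{BessaDaSilvaSa25}. You also correctly spot the coefficient mismatch that such an argument tends to gloss over. However, the step you call the main obstacle is not established.

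\textbf{Stability at critical points.} First, quartic correctors do not control the singular factor. For $\phi=|x-x_0|^{4}$ one has $|D\phi|^{\mathfrak{p}}\,\|D^{2}\phi\|\sim|x-x_0|^{3\mathfrak{p}+2}$, i.e.\ $|\xi|^{\mathfrak{p}+2/3}$ in your own scaling. This is unbounded for $\mathfrak{p}<-2/3$ and does not vanish at $\mathfrak{p}=-2/3$. The hypothesis $\mathfrak{p}>-1$ only ensures that $|x-x_0|^{\beta}$ works for $\beta>(2+\mathfrak{p})/(1+\mathfrak{p})$, an exponent that blows up as $\mathfrak{p}\to-1$.

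Second, what you aim to derive at a critical point, namely $\phi_t(x_0,t_0)\le 0$ for a test function with vanishing gradient, is not alternative (ii) of the Definition. Your claim that (ii) for $u_k$ survives uniform limits is also unsupported: the intervals $(t_k-\delta_k,t_k+\delta_k)$ and the neighbourhoods of $x_k$ on which $x\mapsto\sup_t(u_k(x,t)-\psi_k(t))$ is constant may shrink to nothing as $k\to\infty$. With this ``(i) or (ii)'' definition the difficulty already enters your Case $D\phi(x_0,t_0)\neq 0$. At touching points where $u_k$ satisfies (ii), no inequality for $\phi$ is available.

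Your fallback citation does not help, since \cite{BessaDaSilvaSa25} treats only $0\le\mathfrak{p}\le\mathfrak{q}$. What you need is a genuine stability theorem for singular equations, together with its equivalence to the definition used in the paper. One option is the Ohnuma--Sato framework of admissible test functions $f(|x-x_0|)+g(t)$ with $f(r)=r^{\beta}$ and $\beta$ as above.

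\textbf{Matching the coefficient.} This is a second, independent gap. The limit $u_\infty$ solves \eqref{probhom} with $\mathfrak{a}_\infty(0,t)$, whereas the negated statement concerns solutions with coefficient $\mathfrak{a}_k(0,t)$. So $u_\infty$ is not itself an admissible competitor for the $k$-th problem.

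Your first option needs several ingredients: solvability of the Cauchy--Dirichlet problem for the frozen singular equation in $Q_{7/8}$ (Perron's method), a comparison principle for it, barriers on $\partial_{\mathrm{par}}Q_{7/8}$ uniform in $k$ (so that $\mathfrak{h}_k$ converges up to the boundary, as the $L^\infty(Q_{7/8})$ bound requires), and uniqueness for the limit problem. None of this is proved or cited. Saying it is ``available since the frozen equation is $x$-independent'' is not a justification when $-1<\mathfrak{p}<0$.

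Your second option proves a weaker lemma: approximation by a solution with \emph{some} $x$-independent coefficient obeying the same bounds. That is essentially all Lemma~\ref{improvflat1} uses, so it is a reasonable repair of the paper's scheme. It is not the statement, though, and note that $\mathfrak{a}_\infty(0,\cdot)$ is only Lipschitz, while \cite{FZ23} assumes $C^1$ coefficients.

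Finally, fixing $\mathfrak{a}_+$ and $\mathfrak{A}_0$ along your sequence makes $\eta$ depend on them, contrary to the stated dependence on $\mathfrak{a}_-$ alone. For a compactness proof this is natural: time equicontinuity deteriorates as $\mathfrak{a}_+\to\infty$, and uniform convergence of $\mathfrak{a}_k(0,\cdot)$ uses $\mathfrak{A}_0$. You should say so explicitly.
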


This approximation result yields, under suitable smallness conditions, an improvement of flatness for solutions to \eqref{Problem}, as stated below.

\begin{lemma}[\textbf{Improvement of flatness}]\label{improvflat1}
Assume that the structural conditions $\mathbf{(A1)}-\mathbf{(A3)}$ hold. Let $\eta_1$ be the constant from Lemma \ref{LocalLiplemaprobtransl}. Let \(u\) be a viscosity solution to \eqref{Problem} such that \(\displaystyle\operatornamewithlimits{osc}_{Q_1} u \leq 5\) and $u(0,0)=0$. There exist constants \(\eta > 0\), \(\rho > 0\), and \(\theta \in (0, 1)\), depending only on \(p\), \(n\), \(\mathfrak{p}\), \(\mathfrak{q}\), and $\mathfrak{A}_0$, with \(\rho < (1 - \theta)^{1 + \mathfrak{p}}\), such that if
\[
\max\left\{[\mathfrak{a}]_{C^{0,1}_{x}(Q_{1})},\|f\|_{L^{\infty}(Q_{1})}\right\}\leq \eta,
\]
then there exists a universally bounded vector \(\mathfrak{v}\) such that
\[
\operatornamewithlimits{osc}_{(x,t)\in Q_{\rho}^{(1-\theta)}}(u(x,t)-\mathfrak{v}\cdot x)\leq \eta_1\rho(1-\theta).
\]
\end{lemma}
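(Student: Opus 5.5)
The plan is a compactness--approximation argument: compare $u$ with a solution of the frozen equation \eqref{probhom} via Lemma \ref{Approximationlemma}, and transfer the affine approximation of the limiting profile back to $u$.

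\emph{Step 1: regularity of the frozen problem.} For \eqref{probhom} we need a universal interior $C^{1+\beta,(1+\beta)/2}$ estimate. Since the coefficient $\mathfrak{a}(0,t)$ depends only on time, $\mathfrak{h}$ solves an equation of the type studied by Fang--Zhang \cite{FZ23}, with a coefficient that is $C^1$ in $t$ (this is where $\mathfrak{A}_0$ enters). From their oscillation-decay results we take the following estimate: there exist $\beta\in(0,1)$ and $\mathrm{C}_1$, depending only on $n,p,\mathfrak{p},\mathfrak{q},\mathfrak{a}_\pm,\mathfrak{A}_0$, such that every solution $\mathfrak{h}$ of \eqref{probhom} with $\operatorname{osc}_{Q_{7/8}}\mathfrak{h}\le 6$ and $\mathfrak{h}(0,0)=0$ satisfies
\[
|D\mathfrak{h}(0,0)|\le \mathrm{C}_1,\qquad |\mathfrak{h}(x,t)-D\mathfrak{h}(0,0)\cdot x|\le \mathrm{C}_1\big(|x|^{1+\beta}+|t|^{\frac{1+\beta}{2}}\big)\quad\text{in } Q_{1/2}.
\]
The bound $\operatorname{osc}\mathfrak{h}\le 6$ follows from $\operatorname{osc}u\le 5$ and $\delta\le 1/2$. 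We set $\mathfrak{v}:=D\mathfrak{h}(0,0)$, so that $|\mathfrak{v}|\le \mathrm{C}_1$ is universally bounded.

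\emph{Step 2: choice of constants.} First fix $\theta\in(0,1)$, say $\theta=1/2$. On the intrinsic cylinder $Q^{1-\theta}_\rho$ the time extent is $\rho^2(1-\theta)^{-\mathfrak{p}}\le \rho^2$, because $-\mathfrak{p}>0$ and $1-\theta<1$. Hence $Q_\rho^{1-\theta}\subset Q_\rho$, and Step 1 gives
\[
\operatorname*{osc}_{Q_\rho^{1-\theta}}(\mathfrak{h}-\mathfrak{v}\cdot x)\le 4\mathrm{C}_1\rho^{1+\beta}.
\]
Next choose $\rho$ so small that $4\mathrm{C}_1\rho^{\beta}\le \tfrac12\eta_1(1-\theta)$, $\rho<(1-\theta)^{1+\mathfrak{p}}$, and $\rho\le 1/2$. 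Finally choose $\delta=\tfrac14\eta_1\rho(1-\theta)$, and let $\eta$ be the constant that Lemma \ref{Approximationlemma} associates with this $\delta$.

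\emph{Step 3: conclusion.} Under the smallness hypothesis on $[\mathfrak{a}]_{C^{0,1}_x}$ and $\|f\|_{L^\infty}$, Lemma \ref{Approximationlemma} gives $\|u-\mathfrak{h}\|_{L^\infty(Q_{7/8})}\le\delta$. Therefore
\[
\operatorname*{osc}_{Q_\rho^{1-\theta}}(u-\mathfrak{v}\cdot x)\le 2\delta+4\mathrm{C}_1\rho^{1+\beta}\le \tfrac12\eta_1\rho(1-\theta)+\tfrac12\eta_1\rho(1-\theta)=\eta_1\rho(1-\theta),
\]
which is the claimed estimate.

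The main obstacle is Step 1. We need a universal $C^{1,\beta}$ estimate for the homogeneous mixed singular/degenerate equation with a time-dependent coefficient, and the constant must be uniform over the whole family of limiting profiles. I would try to quote \cite{FZ23} directly, checking that their hypotheses cover $\mathfrak{a}(0,\cdot)$, which is $C^1$ and bounded below by $\mathfrak{a}_->0$. If their result does not apply cleanly, the fallback is to establish the expansion through a compactness argument on normalized solutions, using the Lipschitz estimate of Lemma \ref{LocalLiplemaprob} for equicontinuity. The dependence of the constants on $\mathfrak{a}_\pm$ would then need to be absorbed into the statement.
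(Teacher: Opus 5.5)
Your proposal is correct and follows essentially the paper's route. You approximate $u$ by a frozen-coefficient profile $\mathfrak{h}$ via Lemma \ref{Approximationlemma}, import the $C^{1+\alpha,(1+\alpha)/2}$ affine approximation of $\mathfrak{h}$ from \cite{FZ23}, choose $\rho$ from the universal constants, and then set $\delta=\frac{\eta_1}{4}\rho(1-\theta)$. The remaining differences are cosmetic: you fix $\theta=1/2$, use the inclusion $Q_\rho^{1-\theta}\subset Q_\rho$ (valid since $\mathfrak{p}<0$), and take the single vector $D\mathfrak{h}(0,0)$, whereas the paper sets $\rho=r_0(1-\theta)^{1+\mathfrak{p}}$ so that $Q_\rho^{1-\theta}\subset Q_{r_0}$; the dependence on $\mathfrak{a}_\pm$ that you flag is also present in the paper's constant $\mathrm{C}'$.
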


\begin{proof}
Fix \(\delta > 0\), to be specified \textit{a posteriori}. By Lemma \ref{Approximationlemma}, there exist a universal constant \(\eta > 0\) and a function \(\mathfrak{h}\), a viscosity solution to
\[
\partial_{t}\mathfrak{h} - (|D\mathfrak{h}|^{\mathfrak{p}}+\mathfrak{a}(0,t)|D\mathfrak{h}|^{\mathfrak{q}})\Delta_{p}^{\mathrm{N}}\mathfrak{h} = 0 \,\,\, \text{in} \,\,\, Q_{7/8},
\]
such that $\mathfrak{h}(0,0)=0$ and
\begin{eqnarray}\label{ineq0prop5.2}
\|u - \mathfrak{h}\|_{L^{\infty}(Q_{7/8})} \leq \delta.
\end{eqnarray}
Thus, \(\eta\) is determined by the choice of \(\delta\). By the regularity result in \cite[Theorem 2.3]{FZ23}, there exist universal constants \(\mathrm{C}' > 0\) and \(\alpha_{1} \in (0,1)\) such that, for every \(r \in (0, 5/8)\), there exists \(\mathfrak{v} \in \mathbb{R}^{n}\) satisfying
\[
|\mathfrak{v}| \leq \mathrm{C^{\prime}}(n, p, \mathfrak{p}, \mathfrak{q}, \mathfrak{a}_{-}, \mathfrak{a}_{+}, \mathfrak{A}_{0}, \|\mathfrak{h}\|_{L^{\infty}(Q_{7/8})}),
\]
and
\[
\operatornamewithlimits{osc}_{(x,t) \in Q_{r}}(\mathfrak{h}(x, t) - \mathfrak{v} \cdot x) \leq \mathrm{C^{\prime}}r^{1 + \alpha_{1}}.
\]

Choose \(r_{0} \in (0, 5/8)\) such that
\begin{eqnarray}\label{ineq1prop5.2}
\operatornamewithlimits{osc}_{(x,t) \in Q_{r_{0}}}(\mathfrak{h}(x, t) - \mathfrak{v} \cdot x) \leq \frac{\eta_1}{2}r_{0}(1-\theta)^{2+\mathfrak{p}},
\end{eqnarray}
for some \(\theta \in (0,1)\). Set
\begin{eqnarray}
\rho = r_{0}(1-\theta)^{1+\mathfrak{p}}, \qquad \delta = \frac{\eta_1}{4}\rho(1-\theta).
\end{eqnarray}
By the definition of \(\rho\), \eqref{ineq1prop5.2} implies
\begin{eqnarray}\label{ineq2prop5.2}
\operatornamewithlimits{osc}_{(x,t) \in Q_{\rho}^{(1-\theta)}}(\mathfrak{h}(x, t) - \mathfrak{v} \cdot x) \leq \frac{\eta_1}{2}\rho(1-\theta).
\end{eqnarray}
Since \(r_{0} < 1\), it follows that \(\rho < (1-\theta)^{1+\mathfrak{p}}\) and \(\rho < 7/8\). Combining the choice of \(\delta\) with \eqref{ineq0prop5.2} and \eqref{ineq2prop5.2}, we obtain
\begin{align*}
\operatornamewithlimits{osc}_{(x,t) \in Q_{\rho}^{(1-\theta)}}(u(x, t) - \mathfrak{v} \cdot x)
&\leq \operatornamewithlimits{osc}_{(x,t) \in Q_{\rho}^{(1-\theta)}}(u(x, t) - \mathfrak{h}(x, t)) 
+ \operatornamewithlimits{osc}_{(x,t) \in Q_{\rho}^{(1-\theta)}}(\mathfrak{h}(x, t) - \mathfrak{v} \cdot x)\\
&\leq 2\|u - \mathfrak{h}\|_{L^{\infty}(Q_{7/8})} + \frac{\eta_1}{2}\rho(1-\theta)\\
&\leq 2\delta + \frac{\eta_1}{2}\rho(1-\theta)\\
&\leq \eta_1\rho(1-\theta).
\end{align*}
This completes the proof.
\end{proof}

With this lemma in place, we proceed with the associated iterative scheme.

\begin{lemma}[\bf Iterative argument]\label{improvflat2}
Assume that the structural conditions \(\mathbf{(A1)}\)-\(\mathbf{(A3)}\) hold. Let $\eta_1$ be the constant defined in Lemma \ref{LocalLiplemaprobtransl}. Let \(u\) be a viscosity solution to \eqref{Problem} satisfying \(\displaystyle\operatornamewithlimits{osc}_{Q_{1}} u \leq 1\) and $u(0,0)=0$. Moreover, assume that 
\[
\max\left\{[\mathfrak{a}]_{C^{0,1}_{x}(Q_{1})},\|f\|_{L^{\infty}(Q_{1})}\right\}\leq \eta,
\]
where \(\eta > 0\) is the constant from Lemma \ref{improvflat1}, with \(\eta<\eta_{1}\). Then there exist constants \(\rho \in (0, 1)\) and \(\theta \in (0, 1)\), with \(\rho < (1 - \theta)^{1 + \mathfrak{p}}\), such that the following holds for every integer \(j \geq 0\): if for each \(i \in \{0, 1, \dots, j\}\) there exists \(\ell_i \in \mathbb{R}^{n}\) such that
\begin{itemize}
\item[(i)] \(|\ell_{i}|\leq 2(1-\theta)^{i}\),
\item[(ii)] \(\displaystyle\operatornamewithlimits{osc}_{(x,t)\in Q_{\rho^{i}}^{(1-\theta)^{i}}}(u(x,t)-\ell_{i}\cdot x)\leq \eta_1\rho^{i}(1-\theta)^{i},\)
\end{itemize}
then there exists a vector \(\ell_{j+1} \in \mathbb{R}^{n}\) such that
\begin{eqnarray}\label{teseimprov2flat}
\displaystyle\operatornamewithlimits{osc}_{(x,t)\in Q_{\rho^{j+1}}^{(1-\theta)^{j+1}}}(u(x,t)-\ell_{j+1}\cdot x)\leq\eta_1 \rho^{j+1}(1-\theta)^{j+1}\,\,\, \text{and}\,\,\, |\ell_{j+1}-\ell_{j}|\leq \mathrm{C_{3}}(1-\theta)^{j},
\end{eqnarray}
for some universal constant \(\mathrm{C_{3}} > 0\).
\end{lemma}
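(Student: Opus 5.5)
We argue by rescaling to the unit cylinder at step $j$ and applying Lemma \ref{improvflat1} to the rescaled function. Write $r_j:=\rho^j$ and $\lambda_j:=(1-\theta)^j$, and define
\[
u_j(x,t):=\frac{u(r_j x,\, r_j^2\lambda_j^{-\mathfrak{p}} t)-\ell_j\cdot (r_j x)}{r_j\lambda_j},\qquad (x,t)\in Q_1 .
\]
Hypothesis (ii) at $i=j$ gives $\operatornamewithlimits{osc}_{Q_1}u_j\le \eta_1\le 5$. We may also normalize $u_j(0,0)=0$, since $u(0,0)=0$ and subtracting constants is harmless.

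A direct computation, using the $0$-homogeneity of $\Delta_p^{\mathrm N}$ in the gradient, shows that $u_j$ solves
\[
\partial_t u_j-\Big(|Du_j+\lambda_j^{-1}\ell_j|^{\mathfrak{p}}+\mathfrak{a}_j(x,t)\,\lambda_j^{\mathfrak{q}-\mathfrak{p}}|Du_j+\lambda_j^{-1}\ell_j|^{\mathfrak{q}}\Big)\Delta_p^{\mathrm N}(u_j+\lambda_j^{-1}\ell_j\cdot x)=f_j .
\]
Here the time factor $r_j^2\lambda_j^{-\mathfrak{p}}$ is designed exactly to cancel $\lambda_j^{\mathfrak{p}}$ from $|\lambda_j\,\cdot\,|^{\mathfrak{p}}$. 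The data are
\[
\mathfrak{a}_j(x,t)=\mathfrak{a}(r_jx,r_j^2\lambda_j^{-\mathfrak{p}}t),\qquad f_j=r_j\lambda_j^{-1-\mathfrak{p}}f(\cdots),
\]
so that $[\mathfrak{a}_j\lambda_j^{\mathfrak{q}-\mathfrak{p}}]_{C^{0,1}_x}\le r_j\lambda_j^{\mathfrak{q}-\mathfrak{p}}[\mathfrak{a}]\le\eta$. For the source, the condition $\rho<(1-\theta)^{1+\mathfrak{p}}$ gives $r_j\lambda_j^{-1-\mathfrak{p}}=(\rho(1-\theta)^{-1-\mathfrak{p}})^j\le 1$, hence $\|f_j\|_\infty\le\eta$. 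The modulating coefficient is now $\mathfrak{a}_j\lambda_j^{\mathfrak{q}-\mathfrak{p}}$, whose lower bound degenerates as $j\to\infty$. This is a concern only if the constants are required to depend on $\mathfrak{a}_-$; the constants of Lemma \ref{improvflat1} are stated to depend only on $p,n,\mathfrak{p},\mathfrak{q},\mathfrak{A}_0$, and I will rely on that uniformity.

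The key point is that $u_j$ solves the translated equation \eqref{translatedproblem} with slope $\vec q_j=\lambda_j^{-1}\ell_j$, where $|\vec q_j|\le 2$ by (i). This is not literally \eqref{Problem}. To apply Lemma \ref{improvflat1}, I would rerun its proof in the translated setting. Lemma \ref{Approximationlemma} is a compactness argument, and its proof goes through verbatim for the translated equation once one has equicontinuity. That equicontinuity comes from Lemmas \ref{Localholderlemaprobtransl}--\ref{LocalLiplemaprobtransl} when $|\vec q_j|\ge 2$ (bounded by $\mathrm K_0$), and from the plain estimate \eqref{est3.2} or Lemma \ref{LocalLiplemaprob} applied to $u_j+\vec q_j\cdot x$ when $|\vec q_j|$ is small; time Hölder continuity is handled similarly. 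The limit function then solves a translated frozen equation, equivalently the homogeneous problem after adding back $\vec q_j\cdot x$. The $C^{1,\alpha_1}$ estimate of \cite[Theorem 2.3]{FZ23} therefore applies to $\mathfrak h+\vec q_j\cdot x$ and yields a vector $\mathfrak v_j$, universally bounded, with
\[
\operatornamewithlimits{osc}_{Q_\rho^{(1-\theta)}}(u_j-\mathfrak v_j\cdot x)\le \eta_1\rho(1-\theta).
\]

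Scaling back, set $\ell_{j+1}:=\ell_j+\lambda_j\mathfrak v_j$. Under the rescaling, the cylinder $Q^{(1-\theta)}_\rho$ for $u_j$ corresponds to $B_{\rho r_j}\times(-\rho^2 r_j^2\lambda_j^{-\mathfrak p}(1-\theta)^{-\mathfrak p},0]=Q^{\lambda_{j+1}}_{\rho^{j+1}}$. Multiplying the oscillation bound by $r_j\lambda_j$ gives exactly \eqref{teseimprov2flat}, together with $|\ell_{j+1}-\ell_j|\le \mathrm C_3\lambda_j$, where $\mathrm C_3=\sup_j|\mathfrak v_j|$.

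The main obstacle is the uniformity of the compactness step in $j$. The constants $\eta,\rho,\theta$ must not depend on $j$, and in particular not on $\vec q_j$, which ranges over a compact set, nor on the degenerating coefficient $\lambda_j^{\mathfrak q-\mathfrak p}\mathfrak a_j$. I would handle this by a contradiction argument, taking sequences $\vec q_k\to\vec q_\infty$ and coefficients converging uniformly, with the limit possibly losing the $\mathfrak q$-phase. The limiting equation is then still covered by the Fang--Zhang estimates, uniformly in these parameters. Here the Lipschitz control for translated profiles from Section \ref{Sec3} is exactly what supplies the needed equicontinuity.
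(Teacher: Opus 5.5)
Your argument is correct in substance, but it takes a detour the paper avoids. You propose to re-prove a translated version of Lemma \ref{improvflat1} (approximation lemma plus compactness) and apply it to $u_j$. The paper instead applies Lemma \ref{improvflat1} exactly as stated to $v_j:=u_j+\vec q\cdot x$, where $\vec q=\ell_j/(1-\theta)^j$.

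The reason this works is that $v_j(x,t)=u(\rho^jx,\rho^{2j}(1-\theta)^{-j\mathfrak p}t)/(\rho^j(1-\theta)^j)$ is simply the intrinsic rescaling of $u$. So $v_j$ solves \eqref{Problem} itself, with coefficient $\mathfrak a_j=(1-\theta)^{j(\mathfrak q-\mathfrak p)}\mathfrak a(\cdots)$ and source $f_j$. By (i)--(ii) it also satisfies $v_j(0,0)=0$ and $\operatorname{osc}_{Q_1}v_j\le 1+2|\vec q|\le 5$. This is exactly why Lemma \ref{improvflat1} is formulated under $\operatorname{osc}_{Q_1}u\le 5$ rather than $\le 1$.

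Lemma \ref{improvflat1} then gives $\mathfrak v$ with $|\mathfrak v|\le\mathrm C'$. One sets $\ell_{j+1}=(1-\theta)^j\mathfrak v$, which is the same vector as your $\ell_j+\lambda_j\mathfrak v_j$, since $\mathfrak v_j=\mathfrak v-\vec q_j$. Hypothesis (i) then gives $|\ell_{j+1}-\ell_j|\le(\mathrm C'+2)(1-\theta)^j$.

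What this buys is that no translated approximation lemma is needed. Uniformity in $\vec q_j$ is automatic, because it is absorbed into the bound $\operatorname{osc}\le 5$. So the compactness program in your last two paragraphs is unnecessary.

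You also misattribute a role to the translated estimates of Section \ref{Sec3}. They play no part in this step: they require $|\vec q|\ge 2$, whereas (i) gives $|\vec q_j|\le 2$. In the paper they enter only in the smooth alternative of the final Proposition, to guarantee $|Dv_j|\ge 1$.

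Your concern about the degenerating lower bound $(1-\theta)^{j(\mathfrak q-\mathfrak p)}\mathfrak a_-$ is legitimate. However, the paper's proof shares it: it checks only $\mathfrak A_0$ and the Lipschitz seminorm of $\mathfrak a_j$, and relies on the stated independence of the constants of Lemma \ref{improvflat1} from $\mathfrak a_-$. Your proposed remedy, that a limit losing the $\mathfrak q$-phase is still covered uniformly by \cite{FZ23}, would itself need justification, since that one-phase limit is the setting of \cite{IJS19}.
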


\begin{proof}
Let \(\rho, \theta, \eta\), and \(\mathrm{C}'\) be the constants provided by Lemma \ref{improvflat1}, and define
\[
\mathrm{C}_{1} = 2+\mathrm{C}'.
\]
With this choice, we proceed to establish the claim. 
If \(j = 0\), set \(\ell_{0} = 0\). Then, by the assumption \(\operatornamewithlimits{osc}_{Q_{1}} u \leq 1 \), the conclusion follows directly from Lemma \ref{improvflat1}. Assume now that \(j > 0\), and suppose that the statement holds for \(i = 0, 1, \ldots, j\). We show that there exists a vector \(\ell_{j+1}\) such that \eqref{teseimprov2flat} holds. 

Define the auxiliary function
\[
u_{j}(x,t)=\frac{u(\rho^{j}x,\rho^{2j}(1-\theta)^{-j\mathfrak{p}}t)-\ell_{j}\cdot (\rho^{j}x)}{\rho^{j}(1-\theta)^{j}}.
\]
Note that, by hypothesis,
\[
\operatornamewithlimits{osc}_{Q_1} u_{j} \leq 5
\quad \text{and} \quad
|\ell_{j}| \leq 2(1-\theta)^{j}.
\]
Next, define
\[
v_{j}(x,t) = u_{j}(x,t) + \vec{q} \cdot x,
\quad \text{where} \quad
\vec{q} = \frac{\ell_{j}}{(1 - \theta)^{j}}.
\]
Then \(v_{j}\) solves
\[
\partial_{t}v_{j}-(|D u_j|^{\mathfrak{p}}+\mathfrak{a}_{j}(x,t)|D u_j|^{\mathfrak{q}})\Delta_{p}^{\mathrm{N}}v_{j}
= f_{j}(x,t)
\quad \text{in} \quad Q_{1}.
\] 
for
\begin{eqnarray*}
\left\{
\begin{array}{rll}
f_{j}(x,t)& \defeq& \frac{\rho^{j}}{(1-\theta)^{j(1+\mathfrak{p})}}f(\rho^{j}x,\rho^{2j}(1-\theta)^{-j\mathfrak{p}}t),\\
\mathfrak{a}_{j}(x,t)&\coloneqq &(1-\theta)^{j(\mathfrak{q}-\mathfrak{p})}\mathfrak{a}(\rho^{j}x,\rho^{2j}(1-\theta)^{-j\mathfrak{p}}t).
\end{array}
\right.
\end{eqnarray*}
Observe that \(\mathfrak{a}_{j}\in C^{1}_{x}(Q_{1})\cap C^{1}_{t}(Q_{1})\), and for $(x,t)\in Q_{1}$ we have
\begin{align*}
|D_{x,t}\mathfrak{a}_{j}(x,t)|^{2}&\leq\rho^{2j}|D \mathfrak{a}(\rho^{j}x,\rho^{2j}(1-\theta)^{-j\mathfrak{p}}t)|^{2}+\rho^{4j}(1-\theta)^{-2j\mathfrak{p}}|\partial_{t}\mathfrak{a}(\rho^{j}x,\rho^{2j}(1-\theta)^{-j\mathfrak{p}}t)|^{2}\\
&\leq|D \mathfrak{a}(\rho^{j}x,\rho^{2j}(1-\theta)^{-j\mathfrak{p}}t)|^{2}+(1-\theta)^{2j(2+\mathfrak{p})}|\partial_{t}\mathfrak{a}(\rho^{j}x,\rho^{2j}(1-\theta)^{-j\mathfrak{p}}t)|^{2}\\
&\leq|D_{x,t}\mathfrak{a}(\rho^{j}x,\rho^{2j}(1-\theta)^{-j\mathfrak{p}}t)|^{2}.
\end{align*}
Consequently,
\[
\|D_{x,t}\mathfrak{a}_{j}\|_{L^{\infty}(Q_{1})}\leq \|D_{x,t}\mathfrak{a}\|_{L^{\infty}(Q_{\rho^{j}}^{(1-\theta)^{j}})}\leq \mathfrak{A}_{0}<\infty.
\] 
Moreover, for $t\in(-1,0]$, we have
\begin{align*}
|\mathfrak{a}_{j}(x,t)-\mathfrak{a}_{j}(y,t)| &= (1-\theta)^{j(\mathfrak{q}-\mathfrak{p})}|\mathfrak{a}(\rho^{j}x,\rho^{2j}(1-\theta)^{-j\mathfrak{p}}t)-\mathfrak{a}(\rho^{j}x,\rho^{2j}(1-\theta)^{-j\mathfrak{p}}t)|\\
&\leq (1-\theta)^{j(\mathfrak{q}-\mathfrak{p})}\left(\rho^j|x-y|\right)\\
&\leq (1-\theta)^{j(\mathfrak{q}-\mathfrak{p})}(1-\theta)^{j+\mathfrak{p}j}\eta|x-y|\\
&= (1-\theta)^{j(\mathfrak{q}+1)}\eta|x-y|,
\end{align*}
that is, $[\mathfrak{a}]_{C^{0,1}_{x}(Q_{1})}\leq\eta$. Moreover,
\[
\operatornamewithlimits{osc}_{Q_{1}}v_{j}\leq 1+2|\vec{q}|\leq 5
\]
and
\[
\|f_{j}\|_{L_{\infty}(Q_{1})}=\left(\frac{\rho}{(1-\theta)^{1+\mathfrak{p}}}\right)^{j}\|f\|_{L^{\infty}(Q_{\rho^{j}}^{(1-\theta)^{j}})}\leq \|f\|_{L^{\infty}(Q_{1})}\leq \eta.
\]
Since \(\rho < (1 - \theta)^{1 + \mathfrak{p}}\), the assumptions of Lemma \ref{improvflat1} are satisfied. Consequently, there exists \(\mathfrak{v} \in \mathbb{R}^{n}\), with \(|\mathfrak{v}| \leq \mathrm{C}'\), such that
\begin{eqnarray}\label{est1improv2flat}
\operatornamewithlimits{osc}_{(x,t)\in Q_{\rho}^{(1-\theta)}}(v_{j}(x,t)-\mathfrak{v}\cdot x)\leq \eta_1\rho(1-\theta).
\end{eqnarray}
Define \(\ell_{j+1} = (1 - \theta)^{j} \mathfrak{v}\). Then
\[
|\ell_{j+1}-\ell_{j}|\leq (|\mathfrak{v}|+|\ell_{j}|)(1-\theta)^{j}\leq (\mathrm{C^{\prime}}+2)(1-\theta)^{j}=\mathrm{C_{1}}(1-\theta)^{j}.
\]
Finally, scaling \eqref{est1improv2flat} yields
\[
\operatornamewithlimits{osc}_{Q_{\rho^{j+1}}^{(1-\theta)^{j+1}}}(u(x,t)-\ell_{j+1}\cdot x)\leq \eta_1\rho^{j+1}(1-\theta)^{j+1},
\]
which establishes \eqref{teseimprov2flat}. 
\end{proof}

We are now in a position to establish the Hölder continuity of \(Du\) at the origin, as well as the improved Hölder regularity of \(u\) with respect to the time variable. These estimates suffice to prove Theorem \ref{Thm01} via standard scaling and normalization arguments. From this point on, we assume that \(\eta < \eta_1\); this ensures that the smallness condition required in Lemma \ref{Localholderlemaprobtransl} is satisfied upon entering the second alternative of the iteration.

\begin{proposition}
Assume that the structural conditions \(\mathbf{(A1)}\)-\(\mathbf{(A3)}\) hold. Let \(u\) be a viscosity solution to \eqref{Problem} satisfying \(\displaystyle\operatornamewithlimits{osc}_{Q_{1}} u \leq 1\). If
\[
\max\left\{[\mathfrak{a}]_{C^{0,1}_{x}(Q_{1})},\|f\|_{L^{\infty}(Q_{1})}\right\}\leq \eta,
\]
where \(\eta > 0\) is the constant from Lemma \ref{improvflat1}, then there exist \(\alpha \in \left(0, \frac{1}{1 - \mathfrak{p}}\right)\) and a constant \(\mathrm{C} > 0\), depending only on \(n\), \(p\), \(\mathfrak{p}\), \(\mathfrak{q}\), \(\mathfrak{a}_{-}\), \(\mathfrak{a}_{+}\), and \(\mathfrak{A}_{0}\), such that
\[
|Du(x,t)-D u(y,s)|\leq \mathrm{C}(|x-y|^{\alpha}+|t-s|^{\frac{\alpha}{2}})
\]
and
\[
|u(x,t)-u(x,s)|\leq \mathrm{C}|t-s|^{\frac{1+\alpha}{2}}.
\]
\end{proposition}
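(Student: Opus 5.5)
We argue at the origin; the general point follows by translation, since the hypotheses are invariant under translation in $Q_{1/2}$ after a harmless rescaling. Subtracting $u(0,0)$ we may also assume $u(0,0)=0$. Set $\lambda_j=(1-\theta)^j$, where $\rho,\theta$ come from Lemma \ref{improvflat1}. We fix $\alpha$ by $\rho^{\alpha}=1-\theta$, that is, $\alpha=\log(1-\theta)/\log\rho$. The constraint $\rho<(1-\theta)^{1+\mathfrak{p}}$ gives $\alpha<\frac{1}{1+\mathfrak{p}}$, and shrinking $\theta$ further if necessary (this only decreases $\alpha$) we ensure $\alpha<\frac{1}{1-\mathfrak{p}}$.

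\textbf{Method of alternatives.} We run Lemma \ref{improvflat2} as long as its hypotheses hold, with $\ell_0=0$. Two cases can occur.

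\emph{Degenerate alternative.} If $|\ell_j|\le 2\lambda_j$ for every $j$, the iteration never stops. Summing $|\ell_{j+1}-\ell_j|\le \mathrm{C}_3\lambda_j$ shows $\ell_j\to\ell_\infty$ with $|\ell_j-\ell_\infty|\le \mathrm{C}\lambda_j$, and $|\ell_j|\le 2\lambda_j$ forces $\ell_\infty=0$. Then on $Q^{\lambda_j}_{\rho^j}$ we have $\operatorname{osc}(u-\ell_\infty\cdot x)\le \mathrm{C}\rho^j\lambda_j=\mathrm{C}\rho^{j(1+\alpha)}$. Since $-\mathfrak{p}>0$, the cylinder $Q^{\lambda_j}_{\rho^j}$ contains $Q_{\rho^j}$, so this reads $\operatorname{osc}_{Q_r}(u-\ell_\infty\cdot x)\le \mathrm{C}r^{1+\alpha}$ for every $r$. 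This yields $Du(0,0)=0$, a $C^{1+\alpha}$ expansion at the origin, and the time estimate $|u(0,t)-u(0,0)|\le \mathrm{C}|t|^{(1+\alpha)/2}$.

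\emph{Smooth alternative.} Otherwise there is a first $k$ with $|\ell_k|>2\lambda_k$ while $|\ell_{k-1}|\le 2\lambda_{k-1}$, so that $|\ell_k|\le (2+\mathrm{C}_3)\lambda_{k-1}$. We rescale as in Lemma \ref{improvflat2}:
\[
w(x,t)=\frac{u(\rho^k x,\rho^{2k}\lambda_k^{-\mathfrak{p}}t)-\ell_k\cdot\rho^k x}{\rho^k\lambda_k},\qquad \vec{q}=\ell_k/\lambda_k .
\]
Then $2\le|\vec{q}|\le \mathrm{K}_0:=(2+\mathrm{C}_3)/(1-\theta)$, and $w$ solves \eqref{translatedproblem} with data whose smallness is inherited exactly as in Lemma \ref{improvflat2}. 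Moreover $\operatorname{osc}w\le\eta_1$; after subtracting a constant, $\|w\|_\infty\le\eta_1$. Here $\mathrm{K}_0$ must be compatible with the $\mathrm{K}_0$ allowed in Lemma \ref{LocalLiplemaprobtransl}, which we arrange by choosing constants in the right order.

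Lemma \ref{LocalLiplemaprobtransl} then gives $|Dw|\le1$ in $Q_{3/4}$, so $|Dw+\vec{q}|\ge1$. Hence $\tilde w=w+\vec{q}\cdot x$ solves a uniformly parabolic equation: the coefficient $\mathcal{H}(x,t,D\tilde w)$ lies between two universal positive constants, and the operator is smooth in the gradient on $\{1\le|\xi|\le \mathrm{K}_0+1\}$. By the classical theory \cite{LSU68,Lieberman96}, $\tilde w\in C^{1+\beta,(1+\beta)/2}(Q_{1/2})$ with universal bounds, for some $\beta$. We take $\alpha\le\beta$ as well.

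\textbf{Scaling back.} Reverting the scaling of $w$, we obtain $|Du(x,t)-Du(0,0)|\le \mathrm{C}\lambda_k(|x|/\rho^k)^\beta\le \mathrm{C}|x|^{\alpha}$ in the cylinder of level $k$. In the annuli $Q^{\lambda_j}_{\rho^j}\setminus Q^{\lambda_{j+1}}_{\rho^{j+1}}$ for $j<k$, we use the flatness bound with $\ell_j$ together with $|\ell_j-\ell_k|\le \mathrm{C}\lambda_j=\mathrm{C}\rho^{j\alpha}$. The intrinsic time scale $\rho^{2j}\lambda_j^{-\mathfrak{p}}\ge\rho^{2j}$ is harmless, because it only enlarges the cylinder.

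Combining both alternatives gives a pointwise $C^{1+\alpha}$ expansion at every point of $Q_{1/2}$ with a uniform constant. A standard argument comparing expansions at two points then yields the Hölder estimate for $Du$ and the $|t-s|^{(1+\alpha)/2}$ bound.

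\textbf{Main obstacle.} The delicate step is the consistency of constants. The bound $\mathrm{K}_0$ produced by the stopping time must be admissible in Lemma \ref{LocalLiplemaprobtransl}, whose $\eta_1$ depends on $\mathrm{K}_0$. To break the circularity, we first fix $\theta$ and $\mathrm{C}_3$ from Lemmas \ref{improvflat1} and \ref{improvflat2}; this determines $\mathrm{K}_0$, then $\eta_1$, and finally $\eta<\eta_1$. A second point to check is that the rescaled $\mathfrak{a}_k$ keeps bounds $\mathfrak{a}_\pm(1-\theta)^{k(\mathfrak{q}-\mathfrak{p})}$ depending on $k$. The lower bound $|D\tilde w|\ge1$ absorbs this only because uniform parabolicity needs just $|\xi|^{\mathfrak{p}}\ge(\mathrm{K}_0+1)^{\mathfrak{p}}$, while $\mathfrak{a}_k|\xi|^{\mathfrak{q}}$ remains bounded.
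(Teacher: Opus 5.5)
Your overall route is the paper's: stop when $|\ell_j|$ exceeds $2\lambda_j$, use Lemma~\ref{LocalLiplemaprobtransl} to get $|Dw+\vec{q}|\ge 1$ in the smooth alternative, apply classical theory, and rescale back. However, one geometric claim is reversed, and it is exactly the claim you use to pass from intrinsic to standard cylinders. Since $\lambda_j=(1-\theta)^j<1$ and $-\mathfrak{p}\in(0,1)$, we have $\lambda_j^{-\mathfrak{p}}<1$, so
\[
Q^{\lambda_j}_{\rho^j}=B_{\rho^j}\times\big(-\rho^{2j}\lambda_j^{-\mathfrak{p}},0\big]\subset Q_{\rho^j}.
\]
In the singular regime the intrinsic cylinders are \emph{shorter} in time, not longer. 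Let $\zeta=\log(1-\theta)/\log\rho$ (your $\alpha$). The bound $\operatorname{osc}_{Q^{\lambda_j}_{\rho^j}}(u-\ell_\infty\cdot x)\le \mathrm{C}\rho^{j(1+\zeta)}$ therefore gives neither $\operatorname{osc}_{Q_r}\le \mathrm{C}r^{1+\zeta}$ nor $|u(0,t)-u(0,0)|\le \mathrm{C}|t|^{(1+\zeta)/2}$.

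Writing $\lambda_j=\rho^{j\zeta}$, the $j$-th cylinder has time length $\rho^{j(2-\zeta\mathfrak{p})}<\rho^{2j}$. What the iteration actually yields, in the degenerate alternative and on the annuli $j<k$ of the smooth one (with $\ell=\ell_\infty$, resp. $\ell=\ell_k$), is
\[
|u(x,t)-u(0,0)-\ell\cdot x|\le \mathrm{C}\big(|x|^{1+\zeta}+|t|^{\frac{1+\zeta}{2-\zeta\mathfrak{p}}}\big).
\]
This is the form displayed in the paper's proof. The same reversal affects your scaling back in the smooth alternative. You converted only the spatial term $\lambda_k(|x|/\rho^k)^{\beta}$. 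The time term is $\lambda_k\big(|t|/(\rho^{2k}\lambda_k^{-\mathfrak{p}})\big)^{\beta/2}$, and bounding it by $\mathrm{C}|t|^{\alpha/2}$ requires $\alpha\le \frac{2\zeta}{2-\zeta\mathfrak{p}}$, which is strictly smaller than $\zeta$. So your claim that the intrinsic time scale ``only enlarges the cylinder'' is false, and the exponent $\alpha=\min\{\zeta,\beta\}$ is not justified.

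The repair is to lower the exponent. Take $\alpha\le\min\big\{\beta,\frac{(2+\mathfrak{p})\zeta}{2-\zeta\mathfrak{p}}\big\}$, which is positive and gives $\frac{1+\alpha}{2}\le\frac{1+\zeta}{2-\zeta\mathfrak{p}}$; it also implies $\alpha\le\frac{2\zeta}{2-\zeta\mathfrak{p}}$. Given $r\in(0,1]$, let $j$ be maximal with $\rho^{j(2-\zeta\mathfrak{p})}\ge r^2$. Then $Q_r\subset Q^{\lambda_j}_{\rho^j}$, and in the degenerate alternative
\[
\operatorname{osc}_{Q_r}(u-\ell_\infty\cdot x)\le \mathrm{C}r^{\frac{2(1+\zeta)}{2-\zeta\mathfrak{p}}}\le \mathrm{C}r^{1+\alpha},
\]
with the analogous conversion on the annuli and in the rescaled smooth region. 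The paper's final choice $\alpha=\min\{1,\zeta\}$ needs this same reduction to match its own displayed expansion. Since any smaller exponent is admissible on $Q_{1/2}$, your step of shrinking $\theta$ to force $\alpha<\frac{1}{1-\mathfrak{p}}$ also becomes unnecessary. With this correction, and your ordering $\theta\to \mathrm{C}_3\to \mathrm{K}_0\to\eta_1\to\rho\to\eta$, your argument coincides with the paper's.
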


\begin{proof}
First, up to a translation, we may assume without loss of generality that $u(0,0)=0$. Let \(\theta\) and \(\rho\) be the constants from Lemma \ref{improvflat1}, and let \(j\) be the smallest integer for which conditions (i) and (ii) in that lemma fail. In this case, by Lemma \ref{improvflat2}, it follows that for any \(\mathfrak{v} \in \mathbb{R}^{n}\) with \(|\mathfrak{v}| \leq 2(1 - \theta)^{j}\), the following estimate holds:
\[
|u(x,t)-\mathfrak{v}\cdot x|\leq \mathrm{C}(|x|^{1+\zeta}+|t|^{\frac{1+\zeta}{2-\zeta\mathfrak{p}}}),\,\, \forall (x,t)\in Q_{1}\setminus Q_{\rho^{j+1}}^{(1-\theta)^{j+1}},
\]
where \(\zeta = \frac{\log(1 - \theta)}{\log \rho}\) and \(\mathrm{C} = \frac{3 + \mathrm{C_{1}}(1 - \theta)^{-1}}{\rho(1 - \theta)}\). We distinguish two cases according to the value of \(j\):\\
\(\textbf{1°)}\) \(\bf{j=\infty}\)\\
In this case, the desired regularity follows with 
\(\alpha=\min\{1,\zeta\}\in \left(0,\frac{1}{1-\mathfrak{p}}\right)\), since the relation between $\theta$ and $\rho$ in Lemma \ref{improvflat1} yields
\[
0<\alpha\leq \zeta <1+\mathfrak{p}\leq \frac{1}{1-\mathfrak{p}}.
\]
More precisely, for each \(i\in\mathbb{N}\), there exists a vector \(\ell_{i}\in\mathbb{R}^{n}\) with \(|\ell_{i}|\leq 2(1-\theta)^{i}\) such that
\[
\operatornamewithlimits{osc}_{(x,t)\in Q_{\rho^{i}}^{(1-\theta)^{i}}}(u(x,t)-\ell_{i}\cdot x)\leq \eta_1\rho^{i}(1-\theta)^{i},
\]
and the regularity follows from the equivalence between Campanato and H\"{o}lder spaces (see \cite[Lemma 4.3]{Lieberman96}).\\
\(\textbf{2°)}\) \(\bf{j<\infty}\)\\
Applying Lemma \ref{improvflat2}, we conclude that for all \(i = 0, 1, \ldots, j\), there exists a vector \(\ell_{i} \in \mathbb{R}^{n}\) such that
\begin{equation}\label{Vasco}
\operatornamewithlimits{osc}_{(x,t)\in Q_{\rho^{i}}^{(1-\theta)^{i}}}(u(x,t)-\ell_{i}\cdot x)\leq \eta_1\rho^{i}(1-\theta)^{i}.
\end{equation}
Moreover, 
\[|\ell_{i}|\leq 2(1-\theta)^{i}\quad\text{for}\quad i=0,1,\ldots, j-1, \quad \text{and}\quad |\ell_{j}-\ell_{j-1}|\leq \mathrm{C}_{1}(1-\theta)^{j-1}.
\]
By the minimality of \(j\), we have \(|\ell_{j}|\geq 2(1-\theta)^{j}\). This implies
\begin{eqnarray}\label{ineq1Teo5.4}
2(1-\theta)^{j}\leq |\ell_{j}|\leq |\ell_{j}-\ell_{j-1}|+|\ell_{j-1}|\leq (\mathrm{C_{1}}+2)(1-\theta)^{j-1}.
\end{eqnarray}
Define the auxiliary function
\[
u_{j}(x,t)=\frac{u(\rho^{j}x,\rho^{2j}(1-\theta)^{-j\mathfrak{p}}t)-\ell_{j}\cdot (\rho^{j}x)}{\rho^{j}(1-\theta)^{j}}.
\]
From \eqref{Vasco}, we have
\[\displaystyle\operatornamewithlimits{osc}_{Q_1} u_{j} \leq \eta_1.\]
Moreover, 
\(u_{j}\) is a viscosity solution to
\[
\partial_{t}u_{j}-\mathcal{H}_{j}(x,t,Du_{j})\Delta_p^\mathrm{N}( u_{j} +\vec{q}\cdot x)=f_{j}(x,t)\,\,\, \text{in}\,\,\, Q_{1},
\]
where
\begin{equation}\label{aux_eqHolder}
\left\{
\begin{array}{rcl}
\vec{q}&\defeq& \frac{\ell_{j}}{(1-\theta)^{j}},\\
f_{j}(x,t)& \defeq& \frac{\rho^{j}}{(1-\theta)^{j(1+\mathfrak{p})}}f(\rho^{j}x,\rho^{2j}(1-\theta)^{-j\mathfrak{p}}t), \\
\mathcal{H}_{j}(x,t,\vec{\xi})&\defeq&|\vec{\xi}+\vec{q}|^\mathfrak{p}+\mathfrak{a}_{j}(x,t)|\vec{\xi}+\vec{q}|^\mathfrak{q},\\
\mathfrak{a}_{j}(x,t)&\defeq&(1-\theta)^{j(\mathfrak{q}-\mathfrak{p})}\mathfrak{a}(\rho^{j}x,\rho^{2j}(1-\theta)^{-j\mathfrak{p}}t).
\end{array}
\right.
\end{equation}
As shown in the proof of Lemma \ref{improvflat2}, we have 
\[
\max\left\{[\mathfrak{a}_j]_{C^{0,1}_{x}(Q_{1})},\|f_j\|_{L^{\infty}(Q_{1})}\right\}\leq \eta.
\]
Since \(\eta<\eta_1\), it follows that
\[
\max\left\{\|u_j\|_{L^\infty(Q_1)},[\mathfrak{a}_j]_{C^{0,1}_{x}(Q_{1})},\|f_j\|_{L^{\infty}(Q_{1})}\right\}\leq \eta_1.
\]
Thus, Lemma \ref{LocalLiplemaprobtransl} applies, yielding
\[
\|D  u_{j}\|_{L^{\infty}(Q_{3/4})}\leq 1.
\]
Using the lower bound for \(\ell_{j}\), we obtain
\begin{eqnarray}\label{limgradient}
|D u_{j}(x,t)+\vec{q}|\geq |\vec{q}|-|D u_{j}|\geq 1,\, (x,t)\in Q_{3/4}.    
\end{eqnarray}
Define \(v_{j}(x,t)=u_{j}(x,t)+\vec{q}\cdot x\). Then
\[
\partial_{t}v_{j}-\left(|Dv_j|^\mathfrak{p}+\mathfrak{a}_{j}(x,t)|Dv_j|^\mathfrak{q}\right)\Delta_{p}^{\rm{N}}v_{j}=f_{j}(x,t) \,\,\ \text{in}\,\,\, Q_{1},
\]
where $\vec{q},\,\mathfrak{a}_j$ and $f_j$ are given by \eqref{aux_eqHolder}. By \eqref{ineq1Teo5.4}, we obtain
\[
\|v_{j}\|_{L^{\infty}(Q_{1})}\leq \|u_{j}\|_{L^{\infty}(Q_{1})}+\left|\frac{\ell_{j}}{(1-\theta)^{j}}\right|\leq 2+2(1-\theta)^{-1}=\mathrm{C^{\ast}}.
\]
Consequently, by the local Lipschitz regularity in the spatial variable (Lemma \ref{Localholderlemaprobtransl}), it follows that
\[
\|D v_{j}\|_{L^{\infty}(Q_{3/4})}\leq \mathrm{C}\left(\|v_{j}\|_{L^{\infty}(Q_{1})}+\|v_{j}\|_{L^{\infty}(Q_{1})}^{\frac{1}{1+\mathfrak{p}}}+\|f_{j}\|_{L^{\infty}(Q_{1})}^{\frac{1}{1+\mathfrak{p}}}\right)\leq \tilde{\mathrm{C}}_{\ast},
\]
where \(\tilde{\mathrm{C}}_{\ast}>0\) depends only on \(n\), \(p\), \(\mathfrak{p}\), \(\mathfrak{q}\), \(\mathfrak{a}_{-}\), and \(\mathfrak{A}_{0}\). Note that \(v_{j}\) satisfies
\[
\partial_t v_{j}-\sum_{i,k=1}^{n}a_{ik}^{j}(x,t,D v_{j})\frac{\partial^{2}v_{j}}{\partial x_{i}\partial x_{k}}(x,t)=f_{j}(x,t)\,\,\, \text{in}\,\,\, Q_{3/4},
\]
where
\[
a^{j}_{ik}(x,t,\xi)=\mathscr{H}_{j}(x,t,\xi)\left(\delta_{ik}+(p-2)\frac{\xi_{i}\xi_{k}}{|\xi+\vec{q}|^{2}}\right).
\]
This operator is uniformly parabolic, with ellipticity constants depending only on \(n\), \(p\), \(\mathfrak{p}\), \(\mathfrak{q}\), \(\mathfrak{a}_{-}\), and \(\mathfrak{A}_{0}\), and is smooth with respect to the gradient variable (in view of \eqref{limgradient}). By \cite[Lemma 12.13]{Lieberman96}, we conclude that \(v_{j}\in C^{1+\tilde{\alpha},\frac{1+\tilde{\alpha}}{2}}(Q_{3/4})\) for some \(\tilde{\alpha}\in (0,1)\) depending only on the data. Moreover,
\[
\|D v_{j}\|_{C^{0,\tilde{\alpha}}(\Omega')}\leq \mathrm{C}(n, p,\mathfrak{p},\mathfrak{q},\mathfrak{a}_{-},\mathfrak{A}_{0},\operatorname{dist}(\Omega',\partial_{par}Q_{3/4})),\,\, \forall \Omega'\subset\subset Q_{3/4}.
\]
Proceeding as in \cite[Lemma 4.1]{AttouRuos20}, we conclude the proof.
\end{proof}


\subsection*{Acknowledgments}
FAPESP (Brazil) supported J.~da~Silva~Bessa under Grant No.~2023/18447-3. J.~V.~da~Silva received partial support from CNPq (Brazil) under Grant No.~307131/2022-0, from FAEPEX--UNICAMP (Grant No.~2441/23, Editais Especiais--PIND--Projetos Individuais, 03/2023), and from Chamada CNPq/MCTI No.~10/2023 (Faixa B---Consolidated Research Groups) under Grant No.~420014/2023-3. Additional support was provided by FAPESP (Brazil) under Grant No.~2025/09344-1 (Special Programs---Special Projects---First Projects, 2025, 1st Cycle). G.~S.~S\'{a} acknowledges support from the Centro de Modelamiento Matem\'{a}tico (CMM), BASAL Fund FB210005, a center of excellence funded by ANID (Chile).

\subsection*{Declarations}

\subsection*{Conflict of interest}

On behalf of all authors, the corresponding author states that there is no conflict of interest

\subsection*{Data availability statement}

Data availability does not apply to this article as no new data were created or analyzed in this study.

\end{document}